\documentclass[11pt]{article}
\usepackage[a4paper,margin=1in]{geometry}
\usepackage[T1]{fontenc}
\usepackage[utf8]{inputenc}
\usepackage[english]{babel}
\usepackage{amsmath,amssymb,amsthm,mathtools}
\usepackage[colorlinks]{hyperref}

\newtheorem{thm}{Theorem}[section]
\newtheorem{lem}[thm]{Lemma}
\newtheorem{prop}[thm]{Proposition}
\newtheorem{cor}[thm]{Corollary}
\newtheorem{defn}[thm]{Definition}
\newtheorem{remark}[thm]{Remark}

\DeclareMathOperator{\supp}{supp}

\title{An optimal trace estimate for microlocal square functions on quadratic surfaces}
\author{Vicente Vergara\footnote{Department of Mathematics, Faculty of Physical and Mathematical Sciences, University of Concepci\'on, Concepci\'on, Chile. \texttt{vvergaraa@udec.cl}}}
\date{}

\begin{document}
\maketitle

\begin{abstract}
We study a local trace estimate for the microlocal angular square function
\[
\mathcal G_R f
:=
\left(\sum_\Theta |f_\Theta|^2\right)^{1/2}
\]
associated with a parabolic decomposition of the frequency annulus of radius $R$ in $\mathbb{R}^3$. The measure under consideration is
\[
\mu_Q=\chi\,\mathcal H^2\lfloor S_Q,
\]
where $\chi\in L^\infty(S_Q)$ is a measurable nonnegative density compactly supported in the patch, and
\[
S_Q=\{(u_1,u_2,Q(u_1,u_2)):u\in U\},
\qquad
Q(u_1,u_2)=\frac12(\lambda_1u_1^2+\lambda_2u_2^2),
\qquad
\lambda_1\lambda_2 >0.
\]
Writing $\rho=R^{-1/2}$, we prove
\[
\|\mathcal G_R f\|_{L^2(\mathrm d\mu_Q)}
\lesssim
R^{1/8}\|f\|_{L^2(\mathbb R^3)}.
\]
Under local positivity of the density near the tangency point, the factor
$R^{1/8}$ is attained by a tangent wave packet test and hence cannot be
improved within this elliptic quadratic model, at this parabolic scale and
for this angular square function. In particular, it measures the failure of a
trace bound uniform in $R$ within this class. Its source is the extreme
tangential interaction between a tube of radius $\rho$ and $S_Q$: the relevant
surface measure is $\sim\rho^{3/2}$, whereas an $L^2$-normalized wave packet
has quadratic size $\sim\rho^{-2}$. Thus the optimal quadratic cost is
$\rho^{-1/2}$, producing the norm factor $\rho^{-1/4}=R^{1/8}$.
\end{abstract}

{\bf Keywords:} microlocal square functions; wave packets; trace estimates; quadratic surfaces; tangential concentration.

{\bf AMS MSC 2020:} Primary 42B15; Secondary 42B20, 42B25, 35S30.

\section{Introduction}\label{sec:introduccion}

This paper studies, in a nondegenerate quadratic model, the optimal scale of a local trace estimate for a microlocal angular square function over a positive surface measure in $\mathbb{R}^{3}$. The frequency scale is given by a large parameter $R\ge 1$. We consider functions $f$ whose Fourier transform is supported in the spherical annulus of thickness comparable to $1$,
\[
\supp \widehat f
\subset
\Sigma_R
:=
\{\xi\in\mathbb{R}^{3}: ||\xi|-R|\le 1\}.
\]
The frequency geometry is fixed at the parabolic scale: $\Sigma_R$ is covered by frequency boxes $\Theta$ of dimensions comparable to
\[
R^{1/2}\times R^{1/2}\times 1,
\]
with angular aperture comparable to $R^{-1/2}$ and finite overlap.

Let $\{\psi_\Theta\}_\Theta$ be a smooth partition of unity subordinate to this covering. For $f$ with $\supp\widehat f\subset \Sigma_R$, define
\[
\widehat{f_\Theta}(\xi)
=
\psi_\Theta(\xi)\widehat f(\xi),
\qquad
f=\sum_\Theta f_\Theta,
\]
and the associated angular square function by
\[
\mathcal G_R f(x)
:=
\left(
\sum_\Theta |f_\Theta(x)|^2
\right)^{1/2}.
\]

Square functions of this type are related to Stein square functions,
Bochner--Riesz operators, restriction theory, and local smoothing estimates. In those problems, the typical objective is to obtain global bounds in $L^p(\mathbb{R}^n)$ spaces for operators associated with curved surfaces, radial multipliers, or oscillatory integrals; see, for instance, \cite{Stein1993HarmonicAnalysis,Sogge1993FourierIntegrals,GanOhWu2025SteinSquareFunctions}. After uncertainty-principle considerations, such questions often lead to the geometry of families of rectangles, tubes, or Kakeya-type configurations; see \cite{Fefferman1971BallMultiplier,Cordoba1982GeometricFourierAnalysis}. Microlocal decompositions by wave packets and the corresponding almost orthogonality are part of the standard language of this theory; see also \cite{Hormander1985AnalysisIII,BourgainDemeter2015Decoupling}.

The perspective of this paper is closer to geometric obstruction examples than to a global positive theory. In Fefferman's classical counterexample for the ball multiplier, the obstruction is tied to Kakeya-type configurations, where many directions interact in an essential way \cite{Fefferman1971BallMultiplier}. In restriction problems for eigenfunctions to submanifolds, analogous phenomena appear in the form of microlocal concentration in tubes or Gaussian beams, and the resulting estimates may be optimal in the presence of suitable extremal examples \cite{BurqGerardTzvetkov2007Restrictions,Hu2009Restriction,BlairSogge2016KakeyaNikodym}.

The phenomenon isolated here is more elementary. No Kakeya-type configuration with many orientations is required. In the quadratic model considered below, a single tangent direction and one wave packet adapted to the corresponding tube already produce polynomial growth in the scale. Thus curvature by itself does not eliminate tangent tubular concentration when the output norm is taken with respect to a surface measure in physical space.

The estimate studied in this paper is not a global $L^p$ estimate for Stein's square function, but a local microlocal trace estimate for the angular square function over a singular geometric measure in physical space. The relevant issue is not frequency almost orthogonality in $L^2(\mathbb{R}^{3})$, but the amount of surface measure captured by the physical wave packets.

From this point of view, the problem is close in spirit to restriction inequalities with geometric measures, where the relevant norm is taken with respect to a singular measure; see, for example, \cite{Mockenhaupt2000SalemSets,Mattila2015FourierHausdorff}. However, the object studied here is not a Fourier restriction estimate to a fractal measure in frequency. The measure $\mu$ appears in physical space, and the operator is the angular square function associated with a parabolic decomposition of the frequency annulus.

With the notation fixed in Section~\ref{sec:preliminares}, the relevant wave packets are adapted to tubes of length comparable to $1$ and transverse radius $\rho$. The obstruction comes from the amount of surface measure captured by such tubes in tangential position.

\begin{remark}[comparison with trace estimates]\label{rem:semiclassical-trace-comparison}
The scale $R^{1/8}$ is distinct from the loss in semiclassical trace estimates or eigenfunction restriction estimates to submanifolds; see, for instance, \cite{Sogge1993FourierIntegrals,BurqGerardTzvetkov2007Restrictions}. In those estimates one controls the restriction of a function localized at frequency $h^{-1}\sim R$ to a fixed submanifold. Here, by contrast, the estimate is diagonal with respect to a parabolic angular decomposition of the annulus of thickness $1$.

At the parabolic scale, the relevant packets are concentrated in tubes of length comparable to $1$ and transverse radius $\rho$. The loss comes from an extreme tangential interaction with the surface measure, rather than from the usual semiclassical trace mechanism.
\end{remark}

A reference case is the transversal flat model. If
\[
\mu_\Pi=\chi\,\mathcal H^2\lfloor \Pi,
\]
where $\Pi\subset\mathbb R^3$ is a fixed plane and $\chi\in L^\infty(\Pi)$ is nonnegative and compactly supported, then the projective mechanism gives a uniform bound only for those boxes whose physical directions $v_\Theta$ are uniformly transverse to $\Pi$. Indeed, under the hypothesis
\[
|n_\Pi\cdot v_\Theta|\ge c_0>0,
\]
the orthogonal projections $\pi_{v_\Theta}:\Pi\to v_\Theta^\perp$ have Jacobian uniformly bounded above and below. Therefore the pushforward measure $(\pi_{v_\Theta})_\#\mu_\Pi$ satisfies
\[
(\pi_{v_\Theta})_\#\mu_\Pi(B_{v_\Theta^\perp}(z,r))
\lesssim r^2
\]
uniformly in $\Theta$, $z$, and $0<r\le1$. The mechanism of Proposition~\ref{prop:diagonal-under-projection} then applies with $\beta=2$ and gives a uniform quadratic cost.

Without this transversality, the uniform statement is false. If a tube of length comparable to $1$ and radius $\rho$ has its axis contained in $\Pi$, an $L^2(\mathbb R^3)$-normalized wave packet may have quadratic size $\sim\rho^{-2}$ in the tube, whereas the planar mass of the intersection is $\sim\rho$. This gives a quadratic cost $\rho^{-1}$, which is not uniform. Thus the flat model provides a transversal reference case, rather than a
global bound for all directions.

The main model of this paper is a nondegenerate quadratic patch
\[
S_Q
=
\{(u_1,u_2,Q(u_1,u_2)):u=(u_1,u_2)\in U\},
\]
where
\[
Q(u_1,u_2)
=
\frac12(\lambda_1u_1^2+\lambda_2u_2^2),
\qquad
\lambda_1\lambda_2 >0,
\]
and $U\subset\mathbb{R}^{2}$ is a bounded patch. The measure under consideration is
\[
\mu_Q
=
\chi\,\mathcal H^2\lfloor S_Q,
\]
where $\chi\in L^\infty(S_Q)$ is a measurable nonnegative density compactly supported in the patch. For the optimality argument, it is additionally assumed that $\chi$ is bounded below by a positive constant in a surface neighbourhood of the tangency point under consideration.

\begin{thm}[main result for the elliptic quadratic model]\label{thm:intro-quadratic-main-result}
Let
\[
S_Q=
\{(u_1,u_2,Q(u_1,u_2)):u\in U\},
\qquad
Q(u_1,u_2)
=
\frac12(\lambda_1u_1^2+\lambda_2u_2^2),
\qquad
\lambda_1\lambda_2>0,
\]
and let
\[
\mu_Q=\chi\,\mathcal H^2\lfloor S_Q,
\]
where $\chi\in L^\infty(S_Q)$ is measurable, nonnegative, and compactly supported in the patch. There exists $R_0\ge1$, depending only on the model and on the constants of the partition, such that for every $R\ge R_0$ and every function $f$ with
\[
\supp \widehat f\subset \Sigma_R
\]
one has
\[
\|\mathcal G_R f\|_{L^2(\mathrm d\mu_Q)}
\le
C R^{1/8}
\|f\|_{L^2(\mathbb R^3)}.
\]
The constant $C$ is independent of $R$ and $f$.
For $1\le R<R_0$, the same estimate is absorbed after modifying $C$.

Moreover, if $\chi$ is bounded below by a positive constant in a surface neighbourhood of a tangency point, then there exists a family of functions $f_R$, microlocalized in a single frequency box of the parabolic decomposition, such that
\[
\|\mathcal G_R f_R\|_{L^2(\mathrm d\mu_Q)}
\ge
c R^{1/8}
\|f_R\|_{L^2(\mathbb R^3)}.
\]
Consequently, the exponent $1/8$ is optimal within this elliptic quadratic model, for this parabolic scale and for this angular square function.
\end{thm}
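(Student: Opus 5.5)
The plan is to reduce the upper bound to a single-box estimate and then to a projection (Frostman-type) bound for the surface measure. First, since $\mu_Q$ is a positive measure,
\[
\|\mathcal G_Rf\|_{L^2(\mathrm d\mu_Q)}^2=\sum_\Theta\int|f_\Theta|^2\,\mathrm d\mu_Q .
\]
By frequency almost orthogonality, $\sum_\Theta\|f_\Theta\|_{L^2}^2\lesssim\|f\|_{L^2}^2$. It therefore suffices to prove, uniformly in $\Theta$,
\[
\int|f_\Theta|^2\,\mathrm d\mu_Q\lesssim \rho^{-1/2}\|f_\Theta\|_{L^2(\mathbb R^3)}^2,\qquad \rho=R^{-1/2}.
\]
To obtain this I will invoke Proposition~\ref{prop:diagonal-under-projection}. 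As I read it, it bounds the quadratic cost of one box by $\rho^{\beta-2}$ (times constants) whenever the pushforward satisfies $(\pi_{v_\Theta})_\#\mu(B_{v_\Theta^\perp}(z,r))\lesssim r^\beta$ for $\rho\le r\le 1$. The Schwartz tails of the wave packets are handled inside that proposition, and I assume it was set up to accept the one-sided range $r\ge\rho$. With $\beta=3/2$ this gives the cost $\rho^{-1/2}=R^{1/4}$, hence the factor $R^{1/8}$.

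The geometric heart is the uniform tube estimate
\[
\mu_Q\bigl(T\bigr)\lesssim r^{3/2}\quad\text{for every tube }T\text{ of length }\sim1,\ \text{radius } r\in[\rho,1],\ \text{any direction } v.
\]
Write the tube as $\{x:|\pi_{v^\perp}(x-z)|\le r\}$ intersected with a unit ball. If $v$ is uniformly transverse to $S_Q$, the projection has bounded Jacobian and gives $r^2$, as in the flat transversal case. In the near-tangent case, parametrize by $u$ and split $u=(s,t)$ with $s$ the coordinate along the projection of $v$ onto the tangent plane. The tube condition then becomes $|t-t_0|\lesssim r$ together with
\[
\bigl|Q(u)-\ell(u)\bigr|\lesssim r
\]
for an affine function $\ell$ determined by $v$ and $z$. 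Because $\lambda_1\lambda_2>0$, the function $Q-\ell$ restricted to each line $t=\text{const}$ is a nondegenerate quadratic in $s$, with second derivative bounded below uniformly in the direction. A sublevel set $\{|q(s)|\le r\}$ for such a quadratic has length $\lesssim r^{1/2}$. Integrating over $t$ in an interval of length $r$ gives area $\lesssim r^{3/2}$, and $\chi\in L^\infty$ preserves this.

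The main obstacle is making this uniform over all directions and over the whole range $\rho\le r\le1$, including intermediate tilts where neither the transversal nor the exactly tangent picture applies. The cleanest route is to avoid case analysis altogether. Estimate $|\{u:\operatorname{dist}((u,Q(u)),\text{line})\le r\}|$ directly by the van der Corput sublevel bound along the tangent-direction coordinate, which holds whatever the tilt. In the transverse regime, where $r^2$ is better than needed, the bound $r^{3/2}$ still follows.

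For the lower bound, fix a tangency point $p$ of $S_Q$ with tangent direction $e$, say $p=0$ and $e=e_1$. Choose $\Theta$ whose physical direction is $v_\Theta=e_1$, so that it sits at frequency $\sim Re_1$ (the frequency direction coincides with the tube axis). Set
\[
f_R(x)=\rho^{-1}\phi\bigl(x_1,\rho^{-1}x_2,\rho^{-1}x_3\bigr)e^{iRx_1},
\]
with $\widehat\phi$ supported so that $\widehat{f_R}$ lies in $\Theta$ and $\psi_\Theta\equiv1$ there. Then $\|f_R\|_2\sim1$, $(f_R)_{\Theta'}$ vanishes for boxes not overlapping $\Theta$, $\mathcal G_Rf_R\ge|f_R|$, and $|f_R|\gtrsim\rho^{-1}$ on a tube of radius $c\rho$ around the $x_1$-axis. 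That tube meets $S_Q$ in $\{|u_1|\lesssim\rho^{1/2},\,|u_2|\lesssim\rho\}$, which has area $\sim\rho^{3/2}$ and carries density $\chi\ge c$. Hence $\|\mathcal G_Rf_R\|_{L^2(\mu_Q)}^2\gtrsim\rho^{-2}\rho^{3/2}=R^{1/4}$. The one care point is that $\phi$ must be compactly Fourier supported, hence not compactly supported. I will choose $\phi$ with $|\phi|\ge c$ on a fixed ball by continuity, which is all the argument needs.
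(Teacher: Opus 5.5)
Your argument is correct. For the upper bound it takes a genuinely different route from the paper; your lower bound is the paper's tangent-packet argument.

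\textbf{Upper bound: your route.} You verify one global projective bound $(P_{3/2})$,
$(\pi_v)_\#\mu_Q(B_{v^\perp}(z,r))\lesssim r^{3/2}$,
uniformly in $v\in S^2$, $z\in v^\perp$ and $r\in(0,1]$. You get it from a one-dimensional sublevel estimate along the horizontal direction $e=v'/|v'|$. You then feed it into Proposition~\ref{prop:diagonal-under-projection} with $\beta=3/2$, for all boxes at once.

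\textbf{Upper bound: the paper's route.} The paper explicitly declines a global $(P_{3/2})$ hypothesis (see the opening of Section~\ref{sec:abstract-diagonal-reduction}). Instead it splits the boxes into $\mathbb T_{\tau_0}$, handled by $(P_2)$, and $\mathbb E_{\tau_0}$. For each nontransversal box it splits $S_Q$ into two pieces:
\begin{itemize}
\item the strip $E_{v_\Theta}^{\mathrm{ext}}$, handled by tubular Schur together with $\mu_Q(E_v^{\mathrm{ext}}\cap N_\sigma(T))\lesssim\rho^{1/2}\sigma$; that bound needs the uniform normal form and the projective nondegeneracy of the tangency curve;
\item its complement, handled by $(P_2)$ with constant $\rho^{-1/2}$, coming from $J_{S_Q}\pi_v>\rho^{1/2}$ and multiplicity at most $2$.
\end{itemize}

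\textbf{What each buys.} Your route is shorter. The only place ellipticity enters is $|\partial_e^2Q|\ge\min(|\lambda_1|,|\lambda_2|)$, and you need no implicit-function or compactness argument and no Schur step. Your scale-by-scale bound $r^{3/2}$ for the full measure is also stronger than the paper's $\rho^{-1/2}r^2$ for the complement alone when $r\gg\rho$. The paper's decomposition, in exchange, localizes the loss in the strip $|n_Q\cdot v|\le\rho^{1/2}$, which is the geometric story it wants to tell.

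\textbf{Details to tighten.}
\begin{itemize}
\item When you normalize the tube condition to $|Q-\ell|\lesssim r$ you divide by $|v'|$. That step therefore needs $|v'|\ge c_0$.
\item For $|v'|<c_0$, use instead $|\partial_s(|v'|\,Q-v_3s)|\ge 1/2$ on the bounded chart, valid once $c_0$ is small. This is a monotone sublevel bound and gives $r^2$; it is what your ``whatever the tilt'' has to mean.
\item Both cases work for every $r\in(0,1]$, so your worry about the admissible range of radii in Proposition~\ref{prop:diagonal-under-projection} is moot.
\item In the lower bound, the paper's Fourier normalization requires the modulation $e^{2\pi i Rx_1}$, not $e^{iRx_1}$.
\item An available box has direction only within $O(\rho)$ of $e_1$. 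This is harmless: over the relevant length $\rho^{1/2}$ the drift is $O(\rho^{3/2})\ll\rho$.
\item $\mathcal G_Rf_R\gtrsim|f_R|$ holds even without a plateau, by Cauchy--Schwarz over the $O(1)$ boxes meeting $\supp\widehat{f_R}$.
\end{itemize}
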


The upper bound in Theorem~\ref{thm:intro-quadratic-main-result} is proved
in the diagonal form of Theorem~\ref{thm:quadratic-diagonal-sharp}. The lower
bound is obtained from a single tangent wave packet constructed in
Section~\ref{sec:bad-geometry}, together with persistence under the angular
partition from Subsection~\ref{subsec:wave-packets-y-tubos}.

The proof separates two mechanisms. In the transversal regime, projection
onto the plane perpendicular to the tube direction gives uniform quadratic
cost. In the tangential regime, the extreme strip has mass $\rho^{3/2}$
inside a tangent tube; combined with the packet normalization
$|\phi_T|^2\sim\rho^{-2}$, this gives the residual quadratic cost
$\rho^{-1/2}$ and hence the norm loss $\rho^{-1/4}=R^{1/8}$.

\subsection{Organization of the manuscript}\label{subsec:esquema-del-trabajo}

In Section~\ref{sec:preliminares} we fix the notation, the frequency localization, the angular square function, the quadratic model $S_Q$, the measure $\mu_Q$, and the physical scale $\rho=R^{-1/2}$.

In Section~\ref{sec:abstract-diagonal-reduction} we record the abstract mechanism that converts projective information, tubular overlap, and almost orthogonality of wave packets into diagonal estimates. The abstract calculation produces the quadratic cost $\rho^{-1/2}$ when the effective projective exponent drops from $2$ to $3/2$.

In Section~\ref{sec:resultados-esperados} we formulate the main results and the estimation mechanisms. There we separate the tubular/projective mechanism from the optimal scale of the quadratic model.

In Section~\ref{sec:bad-geometry} we prove the local geometric estimate responsible for tangential concentration:
\[
\mu_Q(S_Q\cap T_\rho\cap E_v^{\mathrm{ext}})
\sim
\rho^{3/2}.
\]
This is the component that identifies the residual cost.

In Section~\ref{sec:reduccion-wave-packets} we carry out the analytic assembly. The uniform transversal bound and the nontransversal bound of cost $\rho^{-1/2}$ imply the estimate
\[
\|\mathcal G_R f\|_{L^2(\mathrm{d}\mu_Q)}
\lesssim
\rho^{-1/4}
\|f\|_{L^2(\mathbb{R}^{3})}
=
R^{1/8}
\|f\|_{L^2(\mathbb{R}^{3})},
\]
and the tangent wave packet test proves the optimality of this scale.
\section{Preliminaries}\label{sec:preliminares}

\subsection{Basic notation}\label{subsec:notacion-basica}

We write $A\lesssim B$ if there exists a constant $C>0$, independent of the relevant parameters in the context, such that $A\leq C B$. If the constant depends on a parameter $\varepsilon$, we write $A\lesssim_\varepsilon B$. The notation $A\sim B$ means that $A\lesssim B$ and $B\lesssim A$.

For $x_0\in\mathbb{R}^3$ and $r>0$, $B(x_0,r)$ denotes the Euclidean ball centered at $x_0$ with radius $r$. For a set $E\subset \mathbb{R}^3$ and $\rho>0$, we write
\[
N_\rho(E):=\{x\in\mathbb{R}^3:\operatorname{dist}(x,E)\leq \rho\}.
\]

If $\mu$ is a positive Borel measure on a metric space and $E$ is a Borel set, we write
\[
\mu\lfloor E
\]
for the restriction of $\mu$ to $E$, that is,
\[
(\mu\lfloor E)(A):=\mu(A\cap E)
\]
for every Borel set $A$.

If $F:X\to Y$ is a Borel map between metric spaces and $\mu$ is a positive Borel measure on $X$, we denote by
\[
F_\#\mu
\]
the pushforward measure on $Y$, defined by
\[
(F_\#\mu)(B):=\mu(F^{-1}(B))
\]
for every Borel set $B\subset Y$.

The Fourier transform is normalized by
\[
\widehat f(\xi)
=
\int_{\mathbb{R}^3} f(x)e^{-2\pi i x\cdot \xi}\,\mathrm{d}x.
\]

\subsection{Frequency localization, caps, and frequency boxes}\label{subsec:localizacion-frecuencial}

Let $R\geq 1$ be large. We regard $R$ as the frequency scale and use the parabolic convention
\[
\rho:=R^{-1/2}.
\]
Thus $\rho$ is the corresponding transverse physical scale. We consider the spherical annulus
\[
\Sigma_R:=N_1(\{\xi\in\mathbb{R}^3: |\xi|=R\})
=
\{\xi\in\mathbb{R}^3: ||\xi|-R|\leq 1\}.
\]
Frequency boxes of dimensions comparable to
\[
R^{1/2}\times R^{1/2}\times 1
=
\rho^{-1}\times\rho^{-1}\times 1
\]
are dual, by the uncertainty principle, to physical tubes of length comparable to $1$ and transverse radius comparable to $\rho$. This convention remains in force throughout the paper.

Fix a finite family of angular caps $\vartheta$ on the sphere $\{\xi\in\mathbb{R}^3: |\xi|=R\}$, with angular aperture comparable to $R^{-1/2}$ and finite overlap. To each cap $\vartheta$ we associate a frequency box $\Theta$, contained in $\Sigma_R$, of approximate dimensions
\[
R^{1/2}\times R^{1/2}\times 1.
\]
Equivalently, and more precisely, $\Theta$ is an anisotropic box adapted to the local geometry of the annulus, containing the radial thickening of the corresponding cap. The total number of frequency boxes is of order $R$.

We choose a smooth partition of unity subordinate to this covering with an interior plateau, using a standard smooth cutoff construction on finitely overlapping anisotropic boxes; see, for instance, \cite{Stein1993HarmonicAnalysis,Hormander1985AnalysisIII}. More precisely, after fixing a constant dilation of the boxes, we take functions $\psi_\Theta\in C^\infty_c(C\Theta)$ such that
\[
0\le \psi_\Theta\le 1,
\qquad
\sum_\Theta \psi_\Theta(\xi)=1
\qquad
\text{on }\Sigma_R,
\]
with uniformly finite overlap, and so that for each $\Theta$ there exists a concentric subbox
\[
\Theta^\circ\Subset \Theta,
\]
separated from the boundary of $\Theta$ by a fixed fraction of its anisotropic dimensions, such that
\[
\psi_\Theta(\xi)=1
\qquad
\text{for every }\xi\in\Theta^\circ.
\]
All constants associated with this choice, including the finite overlap, the rescaled derivative bounds, and the separation fraction of $\Theta^\circ$ from $\partial\Theta$, are fixed once and for all. For $f$ with $\supp\widehat f\subset\Sigma_R$, set
\[
\widehat{f_\Theta}(\xi):=\psi_\Theta(\xi)\widehat f(\xi).
\]
Then
\[
f=\sum_\Theta f_\Theta.
\]

At this scale the curvature of the sphere is visible at the resolution compatible with the thickness of the annulus.

\subsection{Quadratic model and surface measure}\label{subsec:modelo-cuadratico-medida}

The model under consideration is a nondegenerate elliptic quadratic patch with positive surface density.

Let $U\subset \mathbb{R}^{2}$ be a bounded domain and let
\[
Q(u_1,u_2)
=
\frac12(\lambda_1u_1^2+\lambda_2u_2^2),
\qquad
\lambda_1\lambda_2>0.
\]
Define
\[
S_Q
:=
\{(u_1,u_2,Q(u_1,u_2)):u=(u_1,u_2)\in U\}.
\]
The measure is
\[
\mu_Q
=
\chi\,\mathcal{H}^{2}\lfloor S_Q,
\]
where $\chi\in L^\infty(S_Q)$ is a measurable nonnegative function compactly supported in the patch. For the optimality argument, it will additionally be assumed that $\chi$ is bounded below by a positive constant in a surface neighbourhood of the tangency point under consideration.

Since $\operatorname{supp}\chi$ is compact in the patch, we fix once and for all a relatively compact open set $U_0\Subset U$ such that
\[
\operatorname{supp}\chi\subset X(U_0),
\qquad
X(u_1,u_2)=(u_1,u_2,Q(u_1,u_2)).
\]
All geometric constants associated with the model may depend on $Q$, on $U_0$, on $\|\chi\|_{L^\infty(S_Q)}$, and on the constants fixed in the angular partition, including the finite overlap, the rescaled derivative bounds, and the choice of the interior subboxes $\Theta^\circ\Subset\Theta$. They do not depend on $R$, on $\rho$, or on the frequency box under consideration.

These constants are not taken uniformly under degeneration of the elliptic model. In particular, no uniformity is asserted when
\[
\min\{|\lambda_1|,|\lambda_2|\}\downarrow0,
\]
or when the relatively compact open set $U_0$ approaches the boundary of the chart $U$.

The condition $\lambda_1\lambda_2>0$ is part of the model. It excludes real asymptotic directions and, in particular, excludes rectilinear generators contained in $S_Q$. This restriction is structural for the upper bound with loss $R^{1/8}$: in the hyperbolic case, tubes around generators may have surface mass of order larger than the elliptic extreme scale $\rho^{3/2}$.

\subsection{Structural wave packets and physical tubes}\label{subsec:wave-packets-y-tubos}

The preceding frequency decomposition admits a microlocal decomposition into wave packets adapted to physical tubes dual to the boxes $\Theta$. These tubes have length comparable to $1$ and transverse radius comparable to $\rho$.

The following formulation will be used as a microlocal input in the later sections. It is the phase--space decomposition associated with smooth anisotropic partitions of the annulus, with $L^2$ almost orthogonality and rapid decay away from the corresponding physical tube; see, for example, \cite{Stein1993HarmonicAnalysis,Sogge1993FourierIntegrals,Hormander1985AnalysisIII}.

\begin{prop}[structural wave packet decomposition]\label{prop:standard-wave-packet-decomposition}
Let $\Theta$ be a frequency box of dimensions comparable to $\rho^{-1}\times\rho^{-1}\times 1$ inside $\Sigma_R$. Let $v_\Theta\in S^2$ be the central physical direction dual to $\Theta$, and fix an orthonormal basis
\[
(e_{\Theta,1},e_{\Theta,2},v_\Theta),
\qquad
e_{\Theta,1},e_{\Theta,2}\in v_\Theta^\perp.
\]
For $m=(m_1,m_2)\in\mathbb Z^2$ and $j\in\mathbb Z$, define
\[
x_{\Theta,m,j}
:=
\rho m_1 e_{\Theta,1}
+
\rho m_2 e_{\Theta,2}
+
jv_\Theta,
\]
and let $T_{\Theta,m,j}$ be the tube with direction $v_\Theta$, length comparable to $1$, and transverse radius comparable to $\rho$, given by
\[
T_{\Theta,m,j}
:=
\left\{
x=y+s v_\Theta:
|y-\rho m_1e_{\Theta,1}-\rho m_2e_{\Theta,2}|\le C\rho,\ |s-j|\le C
\right\},
\]
where $y\in v_\Theta^\perp$ and $C\ge1$ is a fixed geometric constant.

Then there exists a family of functions
\[
\{\phi_{\Theta,m,j}\}_{m\in\mathbb Z^2,\ j\in\mathbb Z}
\subset \mathcal S(\mathbb R^3)
\]
and a dual family
\[
\{\widetilde\phi_{\Theta,m,j}\}_{m\in\mathbb Z^2,\ j\in\mathbb Z}
\subset \mathcal S(\mathbb R^3)
\]
with the following property. For every function $f_\Theta$ with
\[
\supp\widehat{f_\Theta}\subset \Theta,
\]
one has the expansion
\[
f_\Theta
=
\sum_{m\in\mathbb Z^2}
\sum_{j\in\mathbb Z}
c_{\Theta,m,j}\phi_{\Theta,m,j},
\qquad
c_{\Theta,m,j}:=
\langle f_\Theta,\widetilde\phi_{\Theta,m,j}\rangle,
\]
with convergence in $L^2(\mathbb R^3)$, and
\[
\sum_{m\in\mathbb Z^2}
\sum_{j\in\mathbb Z}
|c_{\Theta,m,j}|^2
\le
C
\|f_\Theta\|_{L^2(\mathbb R^3)}^2,
\]
where $C$ is independent of $R$ and $\Theta$.

Moreover, for every $m\in\mathbb Z^2$, $j\in\mathbb Z$, and every $N\ge1$, the following properties hold:

\begin{enumerate}
\item
\[
\supp\widehat{\phi_{\Theta,m,j}}\subset C\Theta,
\]
for a fixed dilation $C\Theta$.

\item
\[
\|\phi_{\Theta,m,j}\|_{L^2(\mathbb R^3)}\le C.
\]

\item
For every $x\in\mathbb R^3$,
\[
|\phi_{\Theta,m,j}(x)|
\le
C_N\rho^{-1}
\left(
1+
\frac{|\pi_{v_\Theta}x-\rho m_1e_{\Theta,1}-\rho m_2e_{\Theta,2}|}{\rho}
+
|x\cdot v_\Theta-j|
\right)^{-N},
\]
where $\pi_{v_\Theta}$ denotes the orthogonal projection onto $v_\Theta^\perp$.

\item
The same type of estimate holds for $\widetilde\phi_{\Theta,m,j}$, with constants uniform in $R$, $\Theta$, $m$, and $j$.
\end{enumerate}

In particular, if we write $T=T_{\Theta,m,j}$ and $\phi_T=\phi_{\Theta,m,j}$, then
\[
f_\Theta=\sum_{T\in\mathbb T(\Theta)} c_T\phi_T,
\qquad
\sum_{T\in\mathbb T(\Theta)}|c_T|^2
\lesssim
\|f_\Theta\|_{L^2(\mathbb R^3)}^2,
\]
where
\[
\mathbb T(\Theta)
=
\{T_{\Theta,m,j}:m\in\mathbb Z^2,\ j\in\mathbb Z\}.
\]
\end{prop}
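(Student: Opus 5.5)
We would prove Proposition~\ref{prop:standard-wave-packet-decomposition} by rescaling to a fixed frequency box, expanding in a Fourier series there, and rescaling back.

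\emph{Normalization.} Rotate coordinates so that $(e_{\Theta,1},e_{\Theta,2},v_\Theta)$ becomes the standard basis. Here $v_\Theta$ is the radial direction at the centre $\xi_\Theta$ of the cap underlying $\Theta$. The thin side (length $\sim1$) of $\Theta$ is radial and the two long sides ($\sim\rho^{-1}$) are tangential. Since all box constants are fixed, there is an absolute $c>0$ for which the following holds. Let $A_\Theta$ be the linear map sending $\eta=(\eta_1,\eta_2,\eta_3)$ to $\rho^{-1}\eta_1e_{\Theta,1}+\rho^{-1}\eta_2e_{\Theta,2}+\eta_3v_\Theta$. Then
\[
\Theta\subset \xi_\Theta+A_\Theta\big([-c,c]^3\big),\qquad c\le 1/4 .
\]
Write $\xi_\Theta+A_\Theta([-1/2,1/2]^3)=:\Theta^\sharp$; this is the fixed dilation $C\Theta$ in item~1. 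Fix $\chi\in C_c^\infty((-1/2,1/2)^3)$ with $\chi=1$ on $[-c,c]^3$, and set $\chi_\Theta(\xi):=\chi(A_\Theta^{-1}(\xi-\xi_\Theta))$.

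\emph{Fourier series on the rescaled box.} For $\supp\widehat{f_\Theta}\subset\Theta$ we have $\widehat{f_\Theta}=\chi_\Theta\widehat{f_\Theta}$. Set $h(\eta):=\widehat{f_\Theta}(\xi_\Theta+A_\Theta\eta)$, which is supported in $[-c,c]^3$. Expand $h$ in the orthonormal Fourier basis of $L^2([-1/2,1/2]^3)$:
\[
h(\eta)=\sum_{k\in\mathbb Z^3}a_k e^{-2\pi i k\cdot\eta}\quad\text{on }[-1/2,1/2]^3,
\qquad a_k=\int h(\eta)\,\chi(\eta)e^{2\pi i k\cdot\eta}\,d\eta .
\]
Multiplying by $\chi$ gives an identity on all of $\mathbb R^3$, with $L^2$ convergence:
\[
h=\chi h=\sum_k a_k\,\chi(\eta)e^{-2\pi i k\cdot\eta}.
\]
Parseval gives $\sum_k|a_k|^2=\|h\|_2^2=\det(A_\Theta)^{-1}\|f_\Theta\|_2^2=\rho^{2}\|f_\Theta\|_2^2$.

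\emph{Pulling back.} Change variables $\xi=\xi_\Theta+A_\Theta\eta$ and invert the Fourier transform. Define
\[
\phi_{\Theta,k}(x):=\rho^{-1}\,e^{2\pi i x\cdot\xi_\Theta}\,\check\chi\big(A_\Theta^{T}x-k\big),
\qquad c_{\Theta,k}:=\rho\,a_k .
\]
Then $f_\Theta=\sum_k c_{\Theta,k}\phi_{\Theta,k}$ in $L^2$, and $\sum_k|c_{\Theta,k}|^2=\|f_\Theta\|_2^2$. Writing $k=(m_1,m_2,j)$, we have
\[
A_\Theta^Tx-k=\big(\rho^{-1}x\cdot e_{\Theta,1}-m_1,\ \rho^{-1}x\cdot e_{\Theta,2}-m_2,\ x\cdot v_\Theta-j\big),
\]
which vanishes exactly at the lattice point $x_{\Theta,m,j}$.

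\emph{Checking the listed properties.}
\begin{enumerate}
\item $\widehat{\phi_{\Theta,k}}$ is a multiple of $\chi_\Theta$ times a modulation, so its support lies in $\Theta^\sharp=C\Theta$.
\item $\|\phi_{\Theta,k}\|_2=\rho^{-1}\det(A_\Theta)^{1/2}\|\check\chi\|_2=\|\check\chi\|_2$, which is uniformly bounded.
\item Since $\check\chi$ is Schwartz, $|\check\chi(z)|\le C_N(1+|z|)^{-N}$. Substituting $z=A_\Theta^Tx-k$ yields exactly the stated decay with prefactor $\rho^{-1}$.
\item The coefficient is a pairing against a dual function: $c_{\Theta,k}=\langle f_\Theta,\widetilde\phi_{\Theta,k}\rangle$ with $\widetilde\phi_{\Theta,k}$ defined like $\phi_{\Theta,k}$, using $\bar\chi$ in place of $\chi$ and the appropriate constant factor. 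So it enjoys the same Schwartz-tail bound.
\end{enumerate}
All constants depend only on $\chi$ and $c$, hence not on $R$, $\Theta$, $m$, $j$.

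The only genuinely delicate point is the first step: verifying that a fixed dilation of each $\Theta$ sits inside an $A_\Theta$-image of a cube of side $<1$. This needs the radial thin direction of $\Theta$ to coincide with $v_\Theta$ up to angle $O(\rho)$, so that the tilt of the box relative to the frame is absorbed into $c$. We would check this from the construction of $\Theta$ as the radial thickening of a cap of aperture $\rho$. Over such a cap the sphere deviates from its tangent plane by $R\rho^2\sim1$ in the radial direction, which is comparable to the thickness and hence absorbed by choosing $c$ small.
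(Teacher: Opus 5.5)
The paper does not prove this proposition. It quotes it as standard phase--space input with references, and your rescaled Fourier-series construction is the standard argument behind it. Items 1--4 check out apart from two normalization slips. From $f_\Theta=\sum_k a_k\,\rho^{-2}e^{2\pi i x\cdot\xi_\Theta}\check\chi(A_\Theta^Tx-k)$ and $\sum_k|a_k|^2=\rho^2\|f_\Theta\|_2^2$, the coefficients must be $c_{\Theta,k}=\rho^{-1}a_k$ (not $\rho a_k$). Also $\|\phi_{\Theta,k}\|_2=\rho^{-1}\det(A_\Theta)^{-1/2}\|\check\chi\|_2$.

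The genuine gap is the first step. You correctly flag it as the delicate one, but your justification runs in the wrong direction. The inclusion $\Theta\subset\xi_\Theta+A_\Theta([-c,c]^3)$ with $c\le 1/4$ says that $\Theta$ has extent at most $\rho^{-1}/2$ along $e_{\Theta,1},e_{\Theta,2}$ and at most $1/2$ along $v_\Theta$. In other words, $\Theta$ must fit inside one period cell of the frequency lattice dual to $\rho\mathbb Z^2\times\mathbb Z$. Fixed box constants only give the inclusion for some $c$ depending on them, with no reason for $c\le 1/4$. Shrinking $c$ makes the inclusion harder, not easier.

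For the boxes of this paper the inclusion is false. $\Theta$ contains the radial thickening of a cap inside $\Sigma_R$, so its extent along $v_\Theta$ is at least the thickness $2$ of the annulus. The sagitta $\sim1$ you compute adds to this rather than being absorbed. Moreover, a cap of aperture comparable to $R^{-1/2}$ may have tangential diameter several times $\rho^{-1}$. Then $h$ is not supported in $[-1/2,1/2]^3$, and the identity $h=\chi h=\sum_k a_k\chi(\eta)e^{-2\pi i k\cdot\eta}$ fails by aliasing.

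No other argument can keep the exact lattice either. In coordinates where the cap is centred at $(0,0,R)$, the box $\{(\eta_1,\eta_2,R+t):|\eta_1|,|\eta_2|\le\rho^{-1},\ -1\le t\le 0\}$ lies in $\Sigma_R$, since $|\xi|^2\le 2R+R^2<(R+1)^2$. It has dimensions comparable to $\rho^{-1}\times\rho^{-1}\times1$ and volume $4\rho^{-2}$. A Landau-type density count then applies: there are about $4\rho^{-2}$ degrees of freedom per unit volume, against only $\rho^{-2}$ points $x_{\Theta,m,j}$ per unit volume. This excludes any expansion with $\ell^2$-bounded coefficients and with packets and duals localized as in items 3--4.

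The repair is to replace $A_\Theta$ by $LA_\Theta$, with $L\ge1$ depending only on the box constants, so that $\Theta-\xi_\Theta\subset LA_\Theta([-1/4,1/4]^3)$. Your computation then goes through verbatim, with prefactor $L^{3/2}\rho^{-1}$ and packets centred at $L^{-1}x_{\Theta,m,j}$, i.e.\ on a lattice of spacing $\rho/L$ and $1/L$. This proves the proposition with lattice spacing comparable to $\rho$ and $1$. That is all Lemma~\ref{lem:dyadic-overlap-standard} and Lemma~\ref{lem:parallel-tube-tail-overlap} use later, with constants now depending on $L$. It does not prove the proposition with the exact lattice written in the statement.
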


\begin{remark}[longitudinal index and compact support]\label{rem:wave-packet-longitudinal-index}
The longitudinal index $j$ is part of the decomposition in $L^2(\mathbb R^3)$. However, the estimates with respect to $\mu_Q$ involve integration over the fixed compact set $\operatorname{supp}\mu_Q\subset X(U_0)$. Therefore, for each direction $v_\Theta$ and each point $x\in\operatorname{supp}\mu_Q$, the effective longitudinal sum satisfies
\[
\sum_{j\in\mathbb Z}(1+|x\cdot v_\Theta-j|)^{-N}\le C_N,
\]
uniformly in $\Theta$ and in $x$. In particular, the relevant overlap counts on $\operatorname{supp}\mu_Q$ are transversal: the growth under transverse dilation of the tubes by a factor $2^k$ is $2^{2k}$, not $2^{3k}$.
\end{remark}

\begin{lem}[model packet in a frequency box]\label{lem:model-wave-packet-in-box}
Let $\Theta_0$ be a frequency box of dimensions comparable to $\rho^{-1}\times\rho^{-1}\times 1$ inside $\Sigma_R$, with central physical direction $v_0\in S^2$. Let $\xi_0\in\Theta_0$ be a point whose distance from the boundary of the box is comparable to the size of $\Theta_0$, after replacing $\Theta_0$, if necessary, by a concentric subbox of fixed dilation.

Then there exists a tube $T$ of length comparable to $1$, transverse radius comparable to $\rho$, and direction parallel to $v_0$, and there exists a function $\phi_T\in\mathcal S(\mathbb R^3)$ with the following properties.

\begin{enumerate}
\item
The Fourier transform of $\phi_T$ is contained in a fixed dilation of $\Theta_0$:
\[
\supp \widehat{\phi_T}\subset C\Theta_0,
\]
where $C\ge1$ is an absolute constant depending only on the choice of the model profiles.

\item
One has the normalization
\[
\|\phi_T\|_{L^2(\mathbb R^3)}\sim 1,
\]
with constants independent of $R$ and $\Theta_0$.

\item
For every $N\ge1$ there exists $C_N\ge1$, independent of $R$ and $\Theta_0$, such that
\[
|\phi_T(x)|
\le
C_N\rho^{-1}
\left(
1+\frac{\operatorname{dist}(x,T)}{\rho}
\right)^{-N}
\qquad
\text{for every }x\in\mathbb R^3.
\]

\item
There exists a subtube $T'\subset T$, of length comparable to $1$ and transverse radius comparable to $\rho$, such that
\[
|\phi_T(x)|\ge c\rho^{-1}
\qquad
\text{for every }x\in T',
\]
where $c>0$ is independent of $R$ and $\Theta_0$.
\end{enumerate}
\end{lem}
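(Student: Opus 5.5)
The plan is to build $\phi_T$ explicitly from an anisotropic bump in frequency adapted to $\Theta_0$. Let $\Theta_0$ have center $\xi_0$ after replacing it by a concentric subbox of fixed dilation. Its long sides, of size $\sim\rho^{-1}$, span the plane $v_0^\perp$ (tangent to the cap), and its short side, of size $\sim1$, is along $v_0$, the normal direction. Work in the orthonormal frame $(e_1,e_2,v_0)$. Fix once and for all a nonnegative, nonzero $\eta\in C_c^\infty(B(0,1/10))\subset\mathbb R^3$ with $\eta$ even and real, so that $\check\eta$ is real. Define the anisotropic dilation $A\zeta=(\rho^{-1}\zeta_1,\rho^{-1}\zeta_2,\zeta_3)$ in these coordinates and set
\[
\widehat{\phi_T}(\xi)=\rho\,\eta\bigl(A^{-1}(\xi-\xi_0)\bigr).
\]
The support of $\widehat{\phi_T}$ is then $\xi_0+A(B(0,1/10))$, which is contained in $C\Theta_0$ because $\Theta_0$ contains an anisotropic box of comparable dimensions around $\xi_0$. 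This gives (1), with $C$ depending only on the fixed shape constants of the boxes.

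Inverting the transform,
\[
\phi_T(x)=\rho\,|\det A|\,e^{2\pi i x\cdot\xi_0}\,\check\eta(A^{T}x)=\rho^{-1}e^{2\pi i x\cdot\xi_0}\,\check\eta(\rho^{-1}x_1,\rho^{-1}x_2,x_3).
\]
Properties (2)--(4) follow by scaling.

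\textbf{(2).} By Plancherel, $\|\phi_T\|_2^2=\rho^{-2}\cdot\rho^2\|\check\eta\|_2^2=\|\eta\|_2^2\sim1$.

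\textbf{(3).} Take $T=\{|x'|\le\rho,\ |x_3|\le1\}$ in the frame above, where $x'=(x_1,x_2)$. Since $\check\eta\in\mathcal S$, we have $|\check\eta(y)|\le C_N(1+|y|)^{-N}$. Moreover,
\[
1+|(\rho^{-1}x',x_3)|\gtrsim 1+\operatorname{dist}(x,T)/\rho
\]
whenever $|x_3|\le2$. When $|x_3|>2$, use $1+|(\rho^{-1}x',x_3)|\ge(1+\rho^{-1}|x'|+|x_3|)/2$ together with $\operatorname{dist}(x,T)\le|x'|+|x_3|$. The term $|x_3|$ is only controlled in units of $1$, not $\rho$, so $\operatorname{dist}(x,T)/\rho$ is not directly bounded in the longitudinal direction. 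This is the main subtlety: it is exactly why the tubes have length $\sim1$ and the paper's decay in Proposition~\ref{prop:standard-wave-packet-decomposition} uses $|x\cdot v-j|$ rather than $\rho^{-1}$ times it.

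To recover the stated form, I would enlarge the tube or modify the longitudinal profile. Let the longitudinal variable have a compactly supported frequency cutoff of width $\rho^{-1}$ in $\xi_3$ as well? That is not permitted, since $\Theta_0$ has thickness $1$. So instead I would interpret $\operatorname{dist}(x,T)$ anisotropically, as the paper does elsewhere, with decay $(1+|x'|/\rho+\operatorname{dist}(x_3,[-1,1]))^{-N}$. This is exactly the scaled Schwartz bound. If the Euclidean version is genuinely needed, one only needs it on the compact set $\operatorname{supp}\mu_Q$, where the longitudinal factor is harmless. I regard settling this normalization as the only real obstacle.

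\textbf{(4).} The function $\check\eta$ is real and continuous with $\check\eta(0)=\int\eta>0$, so there is $r_0>0$, depending only on $\eta$, with $|\check\eta(y)|\ge\tfrac12\check\eta(0)$ for $|y|\le r_0$. Hence on
\[
T'=\{|x'|\le r_0\rho/2,\ |x_3|\le r_0/2\}
\]
we have $|\phi_T|\ge c\rho^{-1}$ with $c=\tfrac12\check\eta(0)$. The subtube $T'$ has length $\sim r_0\sim1$ and radius $\sim\rho$, as required. All constants depend only on $\eta$ and the fixed box geometry, not on $R$ or $\Theta_0$.
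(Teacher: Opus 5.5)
Your construction is essentially the paper's. The paper takes $\phi_T(x)=\rho^{-1}e^{2\pi i\xi_0\cdot x}a\bigl((y-y_0)/\rho\bigr)b(s-s_0)$ with $\widehat a,\widehat b$ supported in small sets, which is your $\rho^{-1}e^{2\pi i x\cdot\xi_0}\check\eta(\rho^{-1}x',x_3)$ with a tensor-product $\eta$. It verifies (1), (2), (4) as you do: support of the rescaled profile, change of variables, and a lower bound for the profile near the origin. Using a non-product bump changes nothing.

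On (3) your hesitation is justified, and the paper does not resolve it. It asserts that $(1+|y-y_0|/\rho)^{-N}(1+|s-s_0|)^{-N}$ implies $(1+\operatorname{dist}(x,T)/\rho)^{-N}$ ``after modifying $C_N$''. This is false beyond the ends of the tube.

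For your packet, pick $t\in(1,3)$ with $\check\eta(0,0,t)\neq0$; this is possible since $t\mapsto\check\eta(0,0,t)$ is a nonzero entire function. At $x=(0,0,t)$ one has $|\phi_T(x)|=\rho^{-1}|\check\eta(0,0,t)|$. The right side of (3) is at most $C_N\rho^{-1}(1+(t-1)/\rho)^{-N}=o(\rho^{-1})$, so (3) fails for small $\rho$.

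In fact no family obeying (1) and (2) can satisfy the Euclidean form of (3) with constants uniform in $\rho$. Demodulate by $\xi_0$ and rescale $x'\mapsto\rho x'$. The packets are then band-limited to a fixed compact set and have $L^2$ norm $\sim1$. The Euclidean (3) would dominate them by a fixed $L^2$ function. A subsequential limit as $\rho\to0$ would then be a nonzero band-limited function vanishing on $\{|x_3|>L\}$, where $L\sim1$ is the half-length of $T$. This is impossible by analyticity.

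So the anisotropic decay $(1+|x'|/\rho+\operatorname{dist}(x_3,[-1,1]))^{-N}$ you propose is the correct content of (3). It is the form recorded in Proposition~\ref{prop:standard-wave-packet-decomposition}. The lemma itself is used only for the lower bound (Lemma~\ref{lem:model-wave-packet-square-function-lower}, Corollary~\ref{cor:quadratic-no-uniform-bound}), where only (1), (2), (4) enter.

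Two small corrections to your discussion:
\begin{itemize}
\item The comparison $1+|(\rho^{-1}x',x_3)|\gtrsim 1+\operatorname{dist}(x,T)/\rho$ holds only for $|x_3|\le 1$, not $|x_3|\le 2$; the point $x=(0,0,3/2)$ already violates it.
\item The Euclidean bound is harmless on $\operatorname{supp}\mu_Q$ only if you take $T$ long enough to cover the longitudinal extent of $\operatorname{supp}\mu_Q$. Its length is still $O(1)$, and (4) is unaffected since it only needs a short subtube.
\end{itemize}
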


\begin{proof}
Fix an orthonormal basis of $\mathbb R^3$ of the form $(e_1,e_2,v_0)$, and identify each point $x\in\mathbb R^3$ with
\[
x=y+s v_0,
\qquad
y\in v_0^\perp,\quad s\in\mathbb R.
\]
Let $a\in\mathcal S(v_0^\perp)$ be such that $\widehat a$ is supported in a sufficiently small ball of $v_0^\perp$ and $|a(y)|\ge c_a>0$ for $|y|\le c_0$. Let $b\in\mathcal S(\mathbb R)$ be such that $\widehat b$ is supported in a sufficiently small interval and $|b(s)|\ge c_b>0$ for $|s|\le c_0$. Define
\[
\phi_T(x)
:=
\rho^{-1}
e^{2\pi i \xi_0\cdot x}
a\left(\frac{y-y_0}{\rho}\right)
b(s-s_0),
\]
where $y_0\in v_0^\perp$ and $s_0\in\mathbb R$ are fixed, and let
\[
T
:=
\{y+s v_0:\ |y-y_0|\le C\rho,\ |s-s_0|\le C\}.
\]

By a change of variables,
\[
\|\phi_T\|_{L^2(\mathbb R^3)}^2
=
\rho^{-2}
\int_{\mathbb R}
\int_{v_0^\perp}
\left|
a\left(\frac{y-y_0}{\rho}\right)
\right|^2
|b(s-s_0)|^2
\,\mathrm d y\,\mathrm d s
=
\|a\|_{L^2(v_0^\perp)}^2\|b\|_{L^2(\mathbb R)}^2.
\]
After normalizing $a$ and $b$, this gives $\|\phi_T\|_{L^2(\mathbb R^3)}\sim1$.

The Fourier transform of $\phi_T$ is contained in
\[
\xi_0+
\left\{
\eta_\perp+\eta_\parallel v_0:
\eta_\perp\in \rho^{-1}\supp\widehat a,\ 
\eta_\parallel\in\supp\widehat b
\right\}.
\]
Since $\supp\widehat a$ and $\supp\widehat b$ were chosen sufficiently small, this set is contained in a fixed dilation of $\Theta_0$. This proves the frequency localization.

The spatial decay follows from the fact that $a$ and $b$ are Schwartz functions. Indeed, for every $N\ge1$,
\[
\left|
a\left(\frac{y-y_0}{\rho}\right)b(s-s_0)
\right|
\le
C_N
\left(
1+\frac{|y-y_0|}{\rho}
\right)^{-N}
(1+|s-s_0|)^{-N}.
\]
Since the tube $T$ has transverse radius comparable to $\rho$ and length comparable to $1$, this bound implies
\[
|\phi_T(x)|
\le
C_N\rho^{-1}
\left(
1+\frac{\operatorname{dist}(x,T)}{\rho}
\right)^{-N},
\]
after modifying $C_N$.

Finally, if
\[
|y-y_0|\le c_0\rho,
\qquad
|s-s_0|\le c_0,
\]
then, by the choice of $a$ and $b$,
\[
|\phi_T(x)|
=
\rho^{-1}
\left|
a\left(\frac{y-y_0}{\rho}\right)
\right|
|b(s-s_0)|
\ge
c_a c_b\rho^{-1}.
\]
The set of such points contains a subtube $T'\subset T$ of length comparable to $1$ and transverse radius comparable to $\rho$. This concludes the proof.
\end{proof}

\begin{lem}[persistence under the angular partition]\label{lem:model-wave-packet-square-function-lower}
Let $\Theta_0$ be a frequency box in the decomposition of $\Sigma_R$. Suppose that $\Theta_0'\Subset\Theta_0$ is a concentric subbox separated from the boundary of $\Theta_0$ by a fixed fraction of the dimensions of $\Theta_0$. Let $\phi_T$ be a model packet as in Lemma~\ref{lem:model-wave-packet-in-box}, constructed with sufficiently small frequency profiles so that
\[
\supp\widehat{\phi_T}\subset \Theta_0'.
\]
Then there exists a constant $c>0$, depending only on the smooth partition $\{\psi_\Theta\}_\Theta$ and on the fixed choice of the subbox, such that
\[
\mathcal G_R\phi_T(x)
\ge
c|\phi_T(x)|
\qquad
\text{for every }x\in\mathbb R^3.
\]
In particular, the same bound holds on any subtube where $\phi_T$ has a lower bound comparable to $\rho^{-1}$.
\end{lem}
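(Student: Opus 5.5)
The plan is to use the plateau of $\psi_{\Theta_0}$: it turns the $\Theta_0$-component of $\phi_T$ into $\phi_T$ itself, so a single term of the square function already dominates $|\phi_T|$.

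By the construction of the partition in Subsection~\ref{subsec:localizacion-frecuencial}, each box $\Theta$ has a concentric subbox $\Theta^\circ\Subset\Theta$ on which $\psi_\Theta\equiv1$. It is separated from $\partial\Theta$ by a fixed fraction of the anisotropic dimensions. I would take the subbox $\Theta_0'$ in the statement to be contained in $\Theta_0^\circ$. If a different fixed subbox is given, I would shrink the frequency profiles $\widehat a,\widehat b$ in Lemma~\ref{lem:model-wave-packet-in-box} so that
\[
\supp\widehat{\phi_T}\subset\Theta_0'\cap\Theta_0^\circ .
\]
This is possible with a fixed choice of profiles, uniform in $R$ and $\Theta_0$. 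The reason is that both subboxes are concentric dilations of $\Theta_0$ by fixed factors, and $\widehat{\phi_T}$ lives in $\xi_0+\{\eta_\perp+\eta_\parallel v_0\}$ with $|\eta_\perp|\lesssim\rho^{-1}\delta$ and $|\eta_\parallel|\lesssim\delta$, for a small fixed $\delta$.

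Once this support condition holds, we get $\psi_{\Theta_0}\widehat{\phi_T}=\widehat{\phi_T}$, so $(\phi_T)_{\Theta_0}=\phi_T$ identically. One caveat: $\phi_T$ is not supported in $\Sigma_R$, but only in a dilate of $\Theta_0$. The definition of $f_\Theta$ still makes sense for $\phi_T$, and $\Theta_0'\subset\Theta_0\subset\Sigma_R$ in any case. Dropping every term except $\Theta=\Theta_0$ in the sum then gives, pointwise,
\[
\mathcal G_R\phi_T(x)=\Bigl(\sum_\Theta|(\phi_T)_\Theta(x)|^2\Bigr)^{1/2}\ge|(\phi_T)_{\Theta_0}(x)|=|\phi_T(x)|,
\]
so one may even take $c=1$.

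The last claim of the lemma is then immediate: on the subtube $T'$ of Lemma~\ref{lem:model-wave-packet-in-box}(4), we have $\mathcal G_R\phi_T\ge|\phi_T|\ge c\rho^{-1}$.

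The only real obstacle is the first step: making sure $\widehat{\phi_T}$ fits inside the plateau region, uniformly in $R$ and $\Theta_0$. Near $\xi_0$ the box $\Theta_0$ is curved; it follows the annulus. At the parabolic scale, however, the sphere of radius $R$ deviates from its tangent plane over a cap of size $\rho^{-1}$ only by $O(\rho^{-2}/R)=O(1)$. So $\Theta_0$ is comparable to a flat rectangular box of dimensions $\rho^{-1}\times\rho^{-1}\times1$ with fixed constants, and a fixed fractional shrinking suffices. If there were no plateau, the fallback would be to bound $\psi_{\Theta_0}$ from below on $\Theta_0'$ and use a multiplier argument, but that only gives an $L^2$ statement. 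The plateau assumption is exactly what yields the pointwise inequality.
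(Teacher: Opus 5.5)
Your proposal is correct and follows the paper's proof: you place $\supp\widehat{\phi_T}$ inside the plateau $\Theta_0^\circ$ (the paper refines $\Theta_0'\subset\Theta_0^\circ$, you shrink the profiles), so that $(\phi_T)_{\Theta_0}=\phi_T$, and then drop every other term of the square function, which gives the bound with $c=1$. You also check that a fixed shrinking of the profiles fits inside the curved box uniformly in $R$, since the curvature deviation is $O(1)$ at the parabolic scale; this is a useful detail that the paper leaves implicit.
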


\begin{proof}
By the choice of the angular partition with an interior plateau, there exists a concentric subbox $\Theta_0^\circ\Subset\Theta_0$, separated from $\partial\Theta_0$ by a fixed fraction of the anisotropic dimensions of $\Theta_0$, such that
\[
\psi_{\Theta_0}(\xi)=1
\qquad
\text{for every }\xi\in\Theta_0^\circ.
\]
After refining the subbox $\Theta_0'$ if necessary, assume
\[
\Theta_0'\subset \Theta_0^\circ.
\]
Since $\supp\widehat{\phi_T}\subset\Theta_0'$, one has
\[
(\phi_T)_{\Theta_0}
=
\mathcal F^{-1}(\psi_{\Theta_0}\widehat{\phi_T})
=
\phi_T.
\]
Therefore,
\[
\mathcal G_R\phi_T(x)
=
\left(\sum_\Theta |(\phi_T)_\Theta(x)|^2\right)^{1/2}
\ge
|(\phi_T)_{\Theta_0}(x)|
=
|\phi_T(x)|.
\]
The case with a lower bound $c|\phi_T|$ follows from the same argument if $\psi_{\Theta_0}$ is not exactly $1$ but is uniformly separated from $0$ on the frequency support of the packet, with the fixed multiplier incorporated into the definition of the profile.
\end{proof}

\begin{remark}[central direction and parallelization]\label{rem:central-direction-parallelization}
In the later sections, tubes in $\mathbb T(\Theta)$ will often be treated as tubes parallel to the central direction $v_\Theta$. This convention does not introduce an additional geometric hypothesis.

Indeed, by Proposition~\ref{prop:standard-wave-packet-decomposition}, if $T\in\mathbb T(\Theta)$ has actual direction $v_T$, then $|v_T-v_\Theta|\lesssim\rho$. Since the tubes under consideration have length comparable to $1$, the angular deviation $O(\rho)$ produces a transverse displacement $O(\rho)$ along the whole tube. Hence each tube $T$ is contained in a constant dilation of a tube $\widetilde T$ of radius comparable to $\rho$ and direction exactly parallel to $v_\Theta$.

The dyadic overlap estimates, projective bounds, and Schur arguments used below are stable under this replacement, at the cost of modifying only the implicit geometric constants. In particular, the use of tubular families parallel to $v_\Theta$ is a normalization at scale $\rho$.
\end{remark}

\begin{lem}[tubular stability under central parallelization]\label{lem:central-direction-tube-stability}
Let $\rho\in(0,1]$. Let $v,w\in S^2$ be such that
\[
|v-w|\le A_0\rho
\]
for some fixed constant $A_0\ge1$. Let $T$ be a tube of length comparable to $1$, transverse radius comparable to $\rho$, and direction $v$. Then there exists a tube $\widetilde T$ of length comparable to $1$, transverse radius comparable to $\rho$, and direction $w$, such that
\[
T\subset C\widetilde T,
\]
where $C\ge1$ depends only on $A_0$ and on the geometric constants in the definition of tube.

Moreover, for every $N\ge1$ there exists $C_N\ge1$, with the same dependencies and additionally depending on $N$, such that
\[
\left(
1+\frac{\operatorname{dist}(x,T)}{\rho}
\right)^{-N}
\le
C_N
\left(
1+\frac{\operatorname{dist}(x,\widetilde T)}{\rho}
\right)^{-N}
\]
for every $x\in\mathbb R^3$.
\end{lem}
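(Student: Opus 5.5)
The plan is to build $\widetilde T$ by hand from $T$ and then get the decay comparison from the triangle inequality. I write $T=\{y+sv:\ |y-y_0|\le a\rho,\ |s-s_0|\le b\}$, where $y,y_0\in v^\perp$, and $a,b$ are the comparability constants in the definition of tube. Let $x_T:=y_0+s_0v$ be its centre. I take
\[
\widetilde T:=\{x_T+z+tw:\ z\in w^\perp,\ |z|\le a\rho,\ |t|\le b\},
\]
that is, the tube with the same centre and the same dimensions, rotated to have axis $w$. It clearly has length comparable to $1$, radius comparable to $\rho$, and direction $w$.

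\textbf{Step 1: the inclusion.} Take $x=x_T+(y-y_0)+(s-s_0)v\in T$. Rewrite $(s-s_0)v=(s-s_0)w+(s-s_0)(v-w)$ and decompose along $w$ and $w^\perp$. The longitudinal coordinate of $x-x_T$ with respect to $w$ is
\[
t=(s-s_0)+(s-s_0)(v-w)\cdot w+(y-y_0)\cdot w .
\]
Here $|(v-w)\cdot w|\le A_0\rho$. Also $|(y-y_0)\cdot w|=|(y-y_0)\cdot(w-v)|\le a A_0\rho^2$, because $y-y_0\perp v$. Hence $|t|\le b+O(A_0\rho)\le C b$, using $\rho\le1$.

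The transverse part is at most
\[
|y-y_0|+|s-s_0|\,|v-w|+|t-(s-s_0)|\le a\rho+bA_0\rho+O(A_0\rho)\le C a\rho .
\]
This is exactly the content of Remark~\ref{rem:central-direction-parallelization}: an angular deviation $O(\rho)$ over length $O(1)$ produces only an $O(\rho)$ transverse displacement. So $T\subset C\widetilde T$ with $C=C(A_0,a,b)$, where $C\widetilde T$ denotes the concentric dilation of both dimensions.

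\textbf{Step 2: the decay comparison.} The inequality stated in the lemma is the direction that does not follow from the inclusion alone. It needs the reverse containment, or at least that $\widetilde T$ lies close to $T$. I apply Step 1 with the roles of $v$ and $w$ swapped; the construction is symmetric and $|v-w|\le A_0\rho$. This gives $\widetilde T\subset C T$. Every point of $CT$ lies within distance $C'\rho$ of $T$, because the dilation enlarges the radius by $O(\rho)$ and the length by $O(1)$.

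For the length I have to be careful, since the tube's length is $1$, not $\rho$. To avoid the problem I use the short dilation $T^{\rho}:=\{y+sv:\ |y-y_0|\le Ca\rho,\ |s-s_0|\le b+C\rho\}$. Redoing the computation of Step 1 shows directly that $\widetilde T\subset T^{\rho}$. Indeed, the longitudinal error there was $O(A_0\rho)$, not $O(1)$, and the transverse error was $O(\rho)$. Consequently $\sup_{z\in\widetilde T}\operatorname{dist}(z,T)\le C'\rho$.

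By the triangle inequality, $\operatorname{dist}(x,T)\le\operatorname{dist}(x,\widetilde T)+C'\rho$. Therefore
\[
1+\frac{\operatorname{dist}(x,T)}{\rho}\le (1+C')\Bigl(1+\frac{\operatorname{dist}(x,\widetilde T)}{\rho}\Bigr).
\]
Raising this to the power $-N$ reverses the inequality and yields the claim with $C_N=(1+C')^N$.

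\textbf{Main obstacle.} The only delicate point is Step 2's need for an $O(\rho)$ Hausdorff-type closeness. A crude dilation $C\widetilde T$ would enlarge the length by $O(1)$ and destroy the $\rho$-scale decay. This is why I track the longitudinal error explicitly as $O(\rho)$ rather than absorbing it into a constant dilation.
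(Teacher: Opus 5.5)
Your Step 2 proves the converse of the inequality in the lemma, not the inequality itself. From $\operatorname{dist}(x,T)\le\operatorname{dist}(x,\widetilde T)+C'\rho$ you get $1+\operatorname{dist}(x,T)/\rho\le(1+C')\bigl(1+\operatorname{dist}(x,\widetilde T)/\rho\bigr)$. Raising this to the power $-N$ gives
\[
\Bigl(1+\frac{\operatorname{dist}(x,\widetilde T)}{\rho}\Bigr)^{-N}
\le
(1+C')^{N}\Bigl(1+\frac{\operatorname{dist}(x,T)}{\rho}\Bigr)^{-N},
\]
so $T$ and $\widetilde T$ are interchanged relative to the statement. The lemma asks that decay about $T$ be dominated by decay about $\widetilde T$. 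This is how it is used in Proposition~\ref{prop:quadratic-extreme-boxwise-diagonal}: the packet bound $|\phi_T|\lesssim\rho^{-1}(1+\operatorname{dist}(x,T)/\rho)^{-N}$ is traded for one adapted to the parallel tube $\widetilde T$. That inequality is equivalent to $\operatorname{dist}(x,\widetilde T)\le\operatorname{dist}(x,T)+C\rho$. This in turn requires $\sup_{z\in T}\operatorname{dist}(z,\widetilde T)\lesssim\rho$: the points of $T$ must be close to $\widetilde T$, not the other way round. Your claim that the stated direction ``needs the reverse containment'' is therefore backwards, and as written the stated bound is never derived.

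The repair is already in your Step 1. There you showed that every $x\in T$ has $w$-coordinate $|t|\le b+O(A_0\rho)$ relative to $x_T$, and transverse offset $O(\rho)$ from the axis of $\widetilde T$. Both errors are additive and of size $O(\rho)$, so $T\subset N_{C'\rho}(\widetilde T)$. The triangle inequality $\operatorname{dist}(x,\widetilde T)\le\operatorname{dist}(x,T)+C'\rho$ then gives exactly the stated bound with $C_N=(1+C')^N$.

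This is the paper's route. It takes the longitudinal interval of $\widetilde T$ to contain the projection of that of $T$, so that $T\subset C\widetilde T$ is effectively only a transverse $O(\rho)$ thickening, and reads off $\operatorname{dist}(x,\widetilde T)\le\operatorname{dist}(x,T)+C\rho$. Your remark that a two-sided dilation $C\widetilde T$ would not give $O(\rho)$ closeness is correct, and it flags a shorthand in the paper. But the careful longitudinal bookkeeping must be applied to the inclusion of $T$ in the $O(\rho)$-neighbourhood of $\widetilde T$, not to $\widetilde T\subset T^{\rho}$. Since your construction is symmetric in $v$ and $w$, both inclusions, and hence both inequalities, do hold; you simply concluded with the one the lemma does not state.
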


\begin{proof}
Let $\ell_T$ be the axis of $T$, and let $x_T$ be a fixed point of $\ell_T$ in the central segment of the tube. Define $\widetilde T$ as the tube with direction $w$, length comparable to $1$, and radius comparable to $\rho$, whose axis passes through $x_T$ and whose longitudinal interval contains the projection of the longitudinal segment of $T$ onto the direction $w$.

If $x\in T$, then there exists $s$ with $|s|\lesssim1$ such that
\[
\operatorname{dist}(x,x_T+s v)\lesssim \rho.
\]
Since $|v-w|\le A_0\rho$, one has
\[
|x_T+s v-(x_T+s w)|
\le
|s|\,|v-w|
\lesssim
\rho.
\]
Therefore,
\[
\operatorname{dist}(x,\ell_{\widetilde T})\lesssim \rho,
\]
and the longitudinal coordinate of $x$ with respect to $w$ remains in an interval of length comparable to $1$. Increasing the geometric constants, one obtains
\[
T\subset C\widetilde T.
\]

For the tail estimate, the preceding inclusion implies
\[
\operatorname{dist}(x,\widetilde T)
\le
\operatorname{dist}(x,T)+C\rho.
\]
Hence
\[
1+\frac{\operatorname{dist}(x,\widetilde T)}{\rho}
\le
C\left(
1+\frac{\operatorname{dist}(x,T)}{\rho}
\right).
\]
Equivalently,
\[
\left(
1+\frac{\operatorname{dist}(x,T)}{\rho}
\right)^{-N}
\le
C_N
\left(
1+\frac{\operatorname{dist}(x,\widetilde T)}{\rho}
\right)^{-N}.
\]
This proves the claim.
\end{proof}

\subsection{Wave packets and the optimal scale}\label{subsec:wave-packets-y-escala-sharp}

The preceding normalization implies that an $L^2$-normalized wave packet essentially supported in a tube of length comparable to $1$ and radius $\rho$ has typical size $\rho^{-1}$ on the tube. In the quadratic model, the extremal tangential interaction occurs when such a tube meets the extreme strip
\[
E_v^{\mathrm{ext}}
=
\{x\in S_Q: |n_Q(x)\cdot v|\le \rho^{1/2}\}.
\]
The corresponding geometric mass estimate is proved in
Section~\ref{sec:bad-geometry}; combined with the packet normalization, it
gives the quadratic scale $\rho^{-1/2}$ and hence the norm scale
$\rho^{-1/4}=R^{1/8}$.

\subsection{Geometric setup and normalization}\label{subsec:setup-geometrico-y-normalizacion}

In the rest of the manuscript we adopt the fixed parabolic normalization
\[
\rho:=R^{-1/2}.
\]
Thus every frequency box $\Theta$ has dimensions comparable to
\[
\rho^{-1}\times \rho^{-1}\times 1
=
R^{1/2}\times R^{1/2}\times 1,
\]
and the dual physical tubes have length comparable to $1$ and transverse radius comparable to $\rho$.

For each frequency box $\Theta$, fix a unit direction $v_{\Theta}\in S^{2}$, an approximate normal to the underlying spherical patch. We denote by $v_{\Theta}^{\perp}$ the plane orthogonal to $v_{\Theta}$, and by
\[
\pi_{v_{\Theta}}:\mathbb{R}^{3}\to v_{\Theta}^{\perp}
\]
the corresponding orthogonal projection.

The family of tubes associated with $\Theta$ is denoted by $\mathbb{T}(\Theta)$. Each $T\in\mathbb{T}(\Theta)$ is a tube of length comparable to $1$ and transverse radius comparable to $\rho$, oriented in the direction $v_{\Theta}$. Whenever an explicit discretization is required, we assume that the tube axes are indexed by a lattice of spacing comparable to $\rho$ in $v_{\Theta}^{\perp}$, with uniformly bounded overlap at unit scale in the longitudinal variable.

The later analytic formulation will be written in terms of representations of the form
\[
f_{\Theta}=\sum_{T\in\mathbb{T}(\Theta)} c_{T}\phi_{T},
\]
where the functions $\phi_{T}$ simultaneously satisfy the following quantitative properties:

\begin{enumerate}
\item
\textbf{Quadratic normalization:}
\[
\|\phi_{T}\|_{L^{2}(\mathbb{R}^{3})}\le 1.
\]

\item
\textbf{Essential frequency localization:} the Fourier transform $\widehat{\phi_{T}}$ decays rapidly away from a constant dilation of $\Theta$.

\item
\textbf{Spatial adaptation to the tube:} there exist $A\ge 1$ and, for each sufficiently large integer $N\ge 1$, a constant $C_{N}\ge 1$, independent of $\Theta$ and $T$, such that
\[
|\phi_{T}(x)|
\le
C_{N}A\,\rho^{-1}
\left(1+\frac{\operatorname{dist}(x,T)}{\rho}\right)^{-N}
\qquad\text{for every }x\in\mathbb{R}^{3}.
\]
\end{enumerate}

Consequently, the abstract block formulated below is conditional with respect to any family $\{\phi_{T}\}_{T\in\mathbb{T}(\Theta)}$ satisfying these properties. In the applications to the quadratic model, this family will be provided by Proposition~\ref{prop:standard-wave-packet-decomposition}, including the Bessel inequality, the anisotropic decay, and the normalization of the packets.
\section{Abstract reduction for the diagonal estimate}\label{sec:abstract-diagonal-reduction}

\subsection{Axiomatic scheme for the diagonal estimate}\label{subsec:axiomatic-diagonal}

This section isolates the abstract mechanism that converts a projective bound for the measure, tubular overlap, and Bessel orthogonality for wave packets into a diagonal estimate with explicit quadratic cost in the transverse scale $\rho$.

In the quadratic model, the transversal regime corresponds to the case $\beta=2$. The extreme tangential contribution is treated later by tubular Schur rather than by a global projective hypothesis with exponent $3/2$.

\begin{defn}[projective hypothesis]\label{def:projective-hypothesis}
Let $\beta\in(0,2]$. We say that a positive Borel measure $\mu$ satisfies the projective hypothesis $(P_{\beta})$ for a family of directions $\{v_{\Theta}\}_{\Theta}$ if there exists a constant $C_{\mathrm{proj}}>0$ such that, for every frequency box $\Theta$, if $\pi_{v_{\Theta}}:\mathbb{R}^{3}\to v_{\Theta}^{\perp}$ denotes the orthogonal projection onto the plane perpendicular to $v_{\Theta}$, then
\[
(\pi_{v_{\Theta}})_{\#}\mu\bigl(B_{v_{\Theta}^{\perp}}(z,r)\bigr)\le C_{\mathrm{proj}}\,r^{\beta}
\]
for every $z\in v_{\Theta}^{\perp}$ and every $0<r\le 1$.
\end{defn}

\begin{defn}[dyadic overlap hypothesis]\label{def:overlap-hypothesis}
Let $a\ge 0$. We say that a family of tubes $\mathbb{T}(\Theta)$ satisfies the dyadic overlap hypothesis $(O_{a})$ if there exist constants $M\ge 1$ and $C_{0}\ge 1$ such that, for every integer $k\ge 0$,
\[
\sum_{T\in\mathbb{T}(\Theta)} \mathbf{1}_{2^{k}C_{0}T}(x)\le M\,2^{ak}
\qquad\text{for every }x\in\mathbb{R}^{3}.
\]
\end{defn}

\begin{remark}\label{rem:axiomatic-diagonal}
The analytic argument starts from the microlocal decomposition of
Proposition~\ref{prop:standard-wave-packet-decomposition},
\[
f_{\Theta}
=
\sum_{T\in\mathbb{T}(\Theta)} c_{T}\phi_{T},
\]
together with the Bessel inequality for the coefficients. Under the projective hypothesis $(P_\beta)$ and the dyadic overlap hypothesis $(O_a)$, the diagonal mechanism of this section produces an estimate of the form
\[
\int_{\mathbb{R}^{3}} \sum_{\Theta}|f_{\Theta}(x)|^{2}\,\mathrm{d}\mu(x)
\lesssim
\rho^{\beta-2}
\sum_{\Theta}\sum_{T\in\mathbb{T}(\Theta)} |c_{T}|^{2},
\]
where $\rho$ is the transverse radius of the tubes dual to the frequency boxes.

At the parabolic scale, $\rho=R^{-1/2}$, and hence the corresponding cost is
\[
\rho^{\beta-2}=R^{(2-\beta)/2}
\]
at the quadratic level, or equivalently
\[
R^{(2-\beta)/4}
\]
after taking square roots in the $L^{2}(\mathrm{d}\mu)$ norm.
\end{remark}

\begin{remark}[use of the standard decomposition]\label{rem:standard-wave-packet-input}
In the applications to the quadratic model, the wave packet representations used in this section are those of Proposition~\ref{prop:standard-wave-packet-decomposition}. In particular, the Bessel inequality, the anisotropic decay, and the normalization of the packets are taken with the structural constants fixed there.

When the abstract mechanism is formulated with tubes parallel to a central direction $v_\Theta$, one uses the normalization of Remark~\ref{rem:central-direction-parallelization}. Therefore, strict parallelism in the tubular hypotheses is a geometric reduction stable at scale $\rho$, not an additional hypothesis on the microlocal construction.
\end{remark}

\begin{remark}[flat transversal model]\label{rem:flat-model-uniform-cost}
In the transversal flat model, the projections
$\pi_{v_\Theta}:\Pi\to v_\Theta^\perp$ have Jacobian uniformly bounded above
and below. Hence $(\pi_{v_\Theta})_\#\mu_\Pi$ satisfies the projective
hypothesis with $\beta=2$, and Proposition~\ref{prop:diagonal-under-projection}
gives a scale-uniform quadratic estimate.
\end{remark}

\begin{remark}\label{rem:standard-overlap-a2}
For a transversal discretization by a lattice of spacing $\rho$ in $v_{\Theta}^{\perp}$, the natural value of the overlap parameter is
\[
a=2.
\]
When the tubes are transversely dilated by a factor $2^{k}$, the number of lattice axes that can contribute at a fixed point grows like the area of a disk of radius $2^{k}\rho$ in the plane $v_{\Theta}^{\perp}$, namely like $2^{2k}$.
\end{remark}

\begin{lem}[dyadic overlap for tubes with a longitudinal index]\label{lem:dyadic-overlap-standard}
Let $\rho\in(0,1]$, let $v\in S^{2}$, and fix an orthonormal basis of the form $(e_1,e_2,v)$. For $m=(m_1,m_2)\in\mathbb Z^2$ and $j\in\mathbb Z$, define
\[
T_{m,j}
:=
\left\{
x=y+s v:
|y-\rho m_1e_1-\rho m_2e_2|\le C_0\rho,\ |s-j|\le C_0
\right\}.
\]
For $k\ge0$, define the transverse dilation
\[
2^kT_{m,j}
:=
\left\{
x=y+s v:
|y-\rho m_1e_1-\rho m_2e_2|\le C_0 2^k\rho,\ |s-j|\le C_0
\right\}.
\]
Then there exists a constant $C\ge1$, depending only on $C_0$, such that
\[
\sum_{m\in\mathbb Z^2}
\sum_{j\in\mathbb Z}
\mathbf 1_{2^kT_{m,j}}(x)
\le
C2^{2k}
\qquad
\text{for every }x\in\mathbb R^3\text{ and every }k\ge0.
\]
In particular, the dyadic overlap hypothesis of Proposition~\ref{prop:diagonal-under-projection} holds with $a=2$ for this family, provided that the tubular dilations used there are interpreted as transverse dilations.
\end{lem}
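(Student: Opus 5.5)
The statement to prove is Lemma~\ref{lem:dyadic-overlap-standard}. The plan is a direct lattice-point count, done separately in the transverse variable $y\in v^\perp$ and the longitudinal variable $s$. Fix $x\in\mathbb R^3$ and write it uniquely as $x=y+sv$ with $y\in v^\perp$ and $s=x\cdot v$. By the definition of the transverse dilation, $\mathbf 1_{2^kT_{m,j}}(x)=1$ exactly when
\[
|y-\rho m_1e_1-\rho m_2e_2|\le C_0 2^k\rho
\quad\text{and}\quad
|s-j|\le C_0 .
\]
The two conditions decouple: the first involves only $m$, the second only $j$. So the double sum factorizes as
\[
\#\{m\in\mathbb Z^2: |y-\rho m|\le C_02^k\rho\}\cdot\#\{j\in\mathbb Z:|s-j|\le C_0\},
\]
where $\rho m$ is shorthand for $\rho m_1e_1+\rho m_2e_2$.

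\textbf{Longitudinal factor.} The integers $j$ lie in an interval of length $2C_0$, so their number is at most $2C_0+1$. This bound is independent of $k$, $x$ and $\rho$, which is exactly why the longitudinal dilation does not enter (compare Remark~\ref{rem:wave-packet-longitudinal-index}).

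\textbf{Transverse factor.} Rescale by $\rho^{-1}$: writing $y'=\rho^{-1}y$ in the coordinates $(e_1,e_2)$, we need to count $m\in\mathbb Z^2$ with $|y'-m|\le C_02^k$. A standard area argument handles this. The unit squares $m+[-\tfrac12,\tfrac12)^2$ for these $m$ are pairwise disjoint. Each lies in the disk of radius $C_02^k+1$ about $y'$. Hence the count is at most
\[
\pi(C_02^k+1)^2\le \pi(C_0+1)^2\,2^{2k},
\]
using $k\ge0$. Multiplying the two factors gives the claim with $C=\pi(C_0+1)^2(2C_0+1)$, which depends only on $C_0$.

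\textbf{Identification with $(O_a)$.} For the final sentence, note that the family $\{T_{m,j}\}$ is precisely $\mathbb T(\Theta)$ in the structural decomposition of Proposition~\ref{prop:standard-wave-packet-decomposition}. The transverse dilations $2^kC_0T$ in Definition~\ref{def:overlap-hypothesis} correspond to replacing $C_0$ by $2^kC_0$ in the transverse radius only. So the bound just proved is $(O_2)$ with $M=C$.

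There is no serious obstacle. The only point to watch is to use transverse-only dilation: if the longitudinal interval were also dilated, the $j$-count would grow by an extra factor $2^k$, giving $2^{3k}$ instead of $2^{2k}$.
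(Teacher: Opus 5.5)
Your proposal is correct and follows the paper's proof. Both count the $O_{C_0}(1)$ admissible longitudinal indices $j$ with $|s-j|\le C_0$, count the $O_{C_0}(2^{2k})$ points of $\rho\mathbb Z^2$ in the transverse disk of radius $C_02^k\rho$, and multiply; your disjoint-unit-squares argument just makes the constant explicit, $C=\pi(C_0+1)^2(2C_0+1)$.
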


\begin{proof}
Write
\[
x=y_x+s_xv,
\qquad
y_x\in v^\perp,\quad s_x\in\mathbb R.
\]
If $x\in 2^kT_{m,j}$, then
\[
|s_x-j|\le C_0,
\]
so there are $O_{C_0}(1)$ possible values of $j$. Moreover,
\[
|y_x-\rho m_1e_1-\rho m_2e_2|
\le
C_0 2^k\rho.
\]
The number of points of the lattice $\rho\mathbb Z^2$ contained in a disk of radius $C_0 2^k\rho$ is $O_{C_0}(2^{2k})$. Multiplying by the uniformly bounded number of longitudinal values gives
\[
\sum_{m\in\mathbb Z^2}
\sum_{j\in\mathbb Z}
\mathbf 1_{2^kT_{m,j}}(x)
\le
C2^{2k}.
\]
\end{proof}

\begin{remark}\label{rem:dyadic-overlap-standard}
The exponent $2$ in Lemma~\ref{lem:dyadic-overlap-standard} is the natural transversal growth in ambient dimension $3$. The longitudinal variable does not produce an additional factor because the length of the tubes is not dilated: for each fixed point there are only $O(1)$ longitudinal values $j$ compatible with the condition $|x\cdot v-j|\lesssim1$. If one also dilated longitudinally by $2^k$, the growth would be $2^{3k}$, and the diagonal argument would produce a different loss. In this manuscript all tubular dilations used for dyadic overlap are transverse.
\end{remark}

\begin{prop}[diagonal estimate under projective and dyadic overlap hypotheses]\label{prop:diagonal-under-projection}
Let $\rho\in(0,1]$. For each frequency box $\Theta$, let $\mathbb{T}(\Theta)$ be a finite family of tubes of length comparable to $1$ and radius $\rho$, all parallel to a unit direction $v_{\Theta}\in S^{2}$. Suppose that one has a representation
\[
f_{\Theta}=\sum_{T\in\mathbb{T}(\Theta)} c_{T}\phi_{T},
\]
where each $\phi_{T}\in L^{2}(\mathbb{R}^{3})$ satisfies
\[
\|\phi_{T}\|_{L^{2}(\mathbb{R}^{3})}\le 1
\]
and
\[
|\phi_{T}(x)| \le A\,\rho^{-1}\Bigl(1+\frac{\operatorname{dist}(x,T)}{\rho}\Bigr)^{-N}
\]
for some constants $A\ge 1$ and $N>0$ independent of $\Theta$ and $T$.

Suppose moreover that there exist constants $M\ge 1$, $a\ge 0$, and $C_{0}\ge 1$ such that, for each $\Theta$ and every integer $k\ge 0$,
\[
\sum_{T\in\mathbb{T}(\Theta)} \mathbf{1}_{2^{k}C_{0}T}(x)\le M\,2^{ak}
\qquad\text{for every }x\in\mathbb{R}^{3}.
\]
Let $\mu$ be a positive Borel measure supported in $B(0,1)\subset\mathbb{R}^{3}$. Suppose also that there exist $\beta\in(0,2]$ and a constant $C_{\mathrm{proj}}>0$ such that, for every frequency box $\Theta$, if $\pi_{v_{\Theta}}:\mathbb{R}^{3}\to v_{\Theta}^{\perp}$ denotes the orthogonal projection, then
\[
(\pi_{v_{\Theta}})_{\#}\mu\bigl(B_{v_{\Theta}^{\perp}}(z,r)\bigr)\le C_{\mathrm{proj}}\,r^{\beta}
\]
for every $z\in v_{\Theta}^{\perp}$ and every $0<r\le 1$.

If
\[
N>\frac{\beta+a}{2},
\]
then
\[
\int_{\mathbb{R}^{3}} \sum_{\Theta} |f_{\Theta}(x)|^{2}\,\mathrm{d}\mu(x)
\le
C\,M\,A^{2}C_{\mathrm{proj}}\,\rho^{\beta-2}
\sum_{\Theta}\sum_{T\in\mathbb{T}(\Theta)} |c_{T}|^{2},
\]
where $C\ge 1$ depends only on $N$, on $\beta$, on $a$, and on the geometric constants in the definition of tube.
\end{prop}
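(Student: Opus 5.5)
The plan is to fix a frequency box $\Theta$ and estimate $\int |f_\Theta|^2\,d\mu$ by Cauchy--Schwarz with a weight, and then sum over $\Theta$. Write $w_T(x):=(1+\operatorname{dist}(x,T)/\rho)^{-N}$. Then
\[
|f_\Theta(x)|\le A\rho^{-1}\sum_T |c_T| w_T(x),
\]
and Cauchy--Schwarz gives
\[
|f_\Theta(x)|^2\le A^2\rho^{-2}\Bigl(\sum_T |c_T|^2 w_T(x)\Bigr)\Bigl(\sum_T w_T(x)\Bigr).
\]

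The second factor is controlled by dyadic decomposition. Split $w_T$ into shells $x\in 2^{k}C_0T\setminus 2^{k-1}C_0T$, on which $w_T\lesssim 2^{-kN}$. By $(O_a)$,
\[
\sum_T w_T(x)\lesssim M\sum_{k\ge0}2^{ak}2^{-kN}\lesssim M,
\]
which is finite because $N>(\beta+a)/2\ge a/2$. If instead $N\le a$ were a concern, I would split the power $N$ between the two factors. The cleaner route is to use $w_T^{1/2}$ in each Cauchy--Schwarz factor: the first factor is then $\sum_T|c_T|^2 w_T(x)$, and the second is $\sum_T w_T(x)$ with decay exponent $N$. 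Convergence of the second needs $N>a$, but the hypothesis only gives $N>(\beta+a)/2$. I would therefore distribute the decay more carefully, writing
\[
w_T=w_T^{\theta}\,w_T^{1-\theta}.
\]
The first factor then needs $N\theta>\beta$ and the overlap factor needs $N(1-\theta)\cdot$ (appropriately doubled) $>a$. I expect this bookkeeping, namely choosing the exponent split so that exactly $2N>\beta+a$ suffices, to be the main technical obstacle. A cruder argument needs a slightly larger $N$. A safe version is to apply Schur/Cauchy--Schwarz with weights $w_T^{1/2}$ on each side, which uses total decay $N$ split as $N/2+N/2$; then $N/2$ must beat $a/2$ for the overlap sum taken with squared-root weight counting, and $N/2$ must beat $\beta/2$ for the measure integral. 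The condition $2N>\beta+a$ arises by adjusting the split between the two parts.

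The first factor is handled by integrating against $\mu$, which reduces to bounding $\int w_T\,d\mu$ for a single tube. Since $T$ is parallel to $v_\Theta$, has length $\sim1$, and $\mu$ is supported in $B(0,1)$, the set $2^kC_0T\cap\operatorname{supp}\mu$ is contained in $\pi_{v_\Theta}^{-1}(B(z_T,C2^k\rho))$. Applying $(P_\beta)$,
\[
\mu(2^kC_0T)\le C_{\mathrm{proj}}(2^k\rho)^\beta,
\]
capped at $r\le1$, and for larger $k$ the total mass $\mu(B(0,1))\lesssim C_{\mathrm{proj}}$ is used. Hence
\[
\int w_T\,d\mu\lesssim C_{\mathrm{proj}}\rho^\beta\sum_k 2^{k\beta}2^{-kN},
\]
which is finite when the allotted decay exceeds $\beta$.

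Combining the two factors gives
\[
\int|f_\Theta|^2d\mu\lesssim MA^2C_{\mathrm{proj}}\rho^{\beta-2}\sum_T|c_T|^2.
\]
Summing over $\Theta$ yields the claim, with constants depending only on $N$, $\beta$, $a$ and the tube geometry.
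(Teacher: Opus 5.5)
Your single weighted Cauchy--Schwarz can be made to work, and it is a different route from the paper's. As written, however, it does not prove the proposition under the hypothesis $N>(\beta+a)/2$.

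The assertion that $\sum_T w_T(x)\lesssim M\sum_k 2^{(a-N)k}$ converges ``because $N>(\beta+a)/2\ge a/2$'' is false: that series needs $N>a$. The assembly you actually write down keeps the full weight $w_T$ in both factors. It therefore needs $N>a$ for the overlap sum and $N>\beta$ for $\int w_T\,\mathrm{d}\mu$, i.e.\ $N>\max\{a,\beta\}$. This fails, for instance, for $a=2$, $\beta=1$, $N=9/5$. (In the paper's applications $a=\beta=2$, where your version already suffices.)

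Reaching the threshold $2N>a+\beta$ is the only non-routine point of the proposition, and you leave it as ``adjusting the split''. The conditions you sketch for $w_T=w_T^{\theta}w_T^{1-\theta}$ are also off. Cauchy--Schwarz squares \emph{both} pieces, but you double only the overlap exponent, which would give the wrong threshold $N>\beta+a/2$.

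The correct bookkeeping is as follows. Write $|c_T|w_T=(|c_T|w_T^{\theta})\,w_T^{1-\theta}$, so that
\[
|f_\Theta(x)|^2\le A^2\rho^{-2}\Bigl(\sum_T |c_T|^2 w_T(x)^{2\theta}\Bigr)\Bigl(\sum_T w_T(x)^{2(1-\theta)}\Bigr).
\]
By your shell decomposition and $(O_a)$, the second factor is $\lesssim M\sum_k 2^{(a-2(1-\theta)N)k}\lesssim M$ when $2(1-\theta)N>a$.

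For the first factor, use $(P_\beta)$ for $2^k\rho\lesssim1$. For $2^k\rho\gtrsim1$, use $\mu(2^kC_0T)\le\mu(B(0,1))\le C_{\mathrm{proj}}\lesssim C_{\mathrm{proj}}(2^k\rho)^\beta$. Together these give $\int w_T^{2\theta}\,\mathrm{d}\mu\lesssim C_{\mathrm{proj}}\rho^\beta\sum_k 2^{(\beta-2\theta N)k}\lesssim C_{\mathrm{proj}}\rho^\beta$ when $2\theta N>\beta$.

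Any $\theta\in\bigl(\beta/(2N),\,1-a/(2N)\bigr)$ satisfies both conditions, and this interval is nonempty exactly when $2N>a+\beta$. With this choice your argument closes with the constant $CMA^2C_{\mathrm{proj}}\rho^{\beta-2}$, as required.

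For comparison, the paper puts the split in the dyadic variable rather than in the weight. It bounds $|f_\Theta|\lesssim A\rho^{-1}\sum_k 2^{-Nk}\sum_T|c_T|\mathbf 1_{E_k(T)}$. It then applies Cauchy--Schwarz in $k$ with a small loss $2^{\sigma k}$, and Cauchy--Schwarz in $T$ inside each shell. The squared decay $2^{-2Nk}$ is thus shared between the overlap $2^{ak}$ and the mass $(2^k\rho)^\beta$ at the same scale, which produces the condition $\sigma+a+\beta<2N$. The range $2^k\rho>1$ is treated separately via $2^K\sim\rho^{-1}$.

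Your corrected version instead decouples a uniform pointwise overlap bound from a single-packet mass bound. The latter is essentially the computation of Corollary~\ref{cor:wave-packet-projection-frostman} with decay $\theta N$. This is slightly more modular and absorbs the large-$k$ range automatically. Both routes give the same threshold and the same constant.
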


\begin{proof}
Fix $\Theta$. For each $T\in\mathbb{T}(\Theta)$ and each integer $k\ge 0$, define
\[
E_{0}(T):=\Bigl\{x\in\mathbb{R}^{3}:\operatorname{dist}(x,T)\le C_{0}\rho\Bigr\},
\]
and, for $k\ge 1$,
\[
E_{k}(T):=\Bigl\{x\in\mathbb{R}^{3}:2^{k-1}C_{0}\rho<\operatorname{dist}(x,T)\le 2^{k}C_{0}\rho\Bigr\}.
\]
Then
\[
\mathbb{R}^{3}=\bigcup_{k\ge 0} E_{k}(T),
\qquad
E_{k}(T)\subset 2^{k}C_{0}T
\]
for every $k\ge 0$.

Let $\sigma>0$ be such that
\[
0<\sigma<2N-a-\beta.
\]
This is possible by the hypothesis $N>(\beta+a)/2$. By the pointwise bound on $\phi_T$, there exists a constant $C\ge 1$ such that
\[
|\phi_{T}(x)|
\le
C\,A\,\rho^{-1}\sum_{k=0}^{\infty} 2^{-Nk}\mathbf{1}_{E_k(T)}(x)
\]
for every $x\in\mathbb{R}^{3}$. Hence
\[
|f_{\Theta}(x)|
\le
C\,A\,\rho^{-1}\sum_{k=0}^{\infty}
2^{-Nk}\sum_{T\in\mathbb{T}(\Theta)} |c_T|\mathbf{1}_{E_k(T)}(x).
\]
Applying Cauchy--Schwarz to the sum in $k$ with weights $2^{-\sigma k}$ gives
\[
|f_{\Theta}(x)|^{2}
\le
C\,A^{2}\rho^{-2}
\sum_{k=0}^{\infty}
2^{(\sigma-2N)k}
\Bigl(\sum_{T\in\mathbb{T}(\Theta)} |c_T|\mathbf{1}_{E_k(T)}(x)\Bigr)^{2}.
\]
For each $k$, another application of Cauchy--Schwarz to the sum in $T$ gives
\[
\Bigl(\sum_{T\in\mathbb{T}(\Theta)} |c_T|\mathbf{1}_{E_k(T)}(x)\Bigr)^{2}
\le
\Bigl(\sum_{T\in\mathbb{T}(\Theta)} |c_T|^{2}\mathbf{1}_{E_k(T)}(x)\Bigr)
\Bigl(\sum_{T\in\mathbb{T}(\Theta)} \mathbf{1}_{E_k(T)}(x)\Bigr).
\]
Since $E_k(T)\subset 2^k C_0T$, the dyadic overlap hypothesis implies
\[
\sum_{T\in\mathbb{T}(\Theta)} \mathbf{1}_{E_k(T)}(x)
\le
\sum_{T\in\mathbb{T}(\Theta)} \mathbf{1}_{2^k C_0T}(x)
\le
M\,2^{ak}.
\]
Substituting this into the previous inequality,
\[
|f_{\Theta}(x)|^{2}
\le
C\,M\,A^{2}\rho^{-2}
\sum_{k=0}^{\infty}
2^{(\sigma+a-2N)k}
\sum_{T\in\mathbb{T}(\Theta)} |c_T|^{2}\mathbf{1}_{E_k(T)}(x).
\]
Integrating with respect to $\mu$ and interchanging the sum and the integral, we obtain
\[
\int_{\mathbb{R}^{3}} |f_{\Theta}(x)|^{2}\,\mathrm{d}\mu(x)
\le
C\,M\,A^{2}\rho^{-2}
\sum_{T\in\mathbb{T}(\Theta)} |c_T|^{2}
\sum_{k=0}^{\infty} 2^{(\sigma+a-2N)k}\mu(E_k(T)).
\]

Let $C_\ast\ge 1$ be a sufficiently large geometric constant, depending only on $C_0$ and on the convention used to dilate tubes. Let $K\ge 0$ be the integer such that
\[
C_\ast 2^K\rho\le 1 < C_\ast 2^{K+1}\rho.
\]
We split the sum in $k$ into two ranges.

For $0\le k\le K$, since $E_k(T)\subset 2^k C_0T$, the projection of $2^k C_0T$ onto $v_\Theta^\perp$ is contained in a ball of radius comparable to $2^k\rho$ centered at the point of $v_\Theta^\perp$ determined by the axis of $T$. By the choice of $K$, for $0\le k\le K$ this radius is at most $1$, and the projective hypothesis gives
\[
\mu(E_k(T))
\le
\mu(2^k C_0T)
\le
C\,C_{\mathrm{proj}}(2^k\rho)^\beta.
\]
Therefore,
\[
\sum_{k=0}^{K} 2^{(\sigma+a-2N)k}\mu(E_k(T))
\le
C\,C_{\mathrm{proj}}\rho^\beta
\sum_{k=0}^{K} 2^{(\sigma+a+\beta-2N)k}.
\]
Since $\sigma+a+\beta-2N<0$, the geometric series converges uniformly in $K$, and we obtain
\[
\sum_{k=0}^{K} 2^{(\sigma+a-2N)k}\mu(E_k(T))
\le
C\,C_{\mathrm{proj}}\rho^\beta.
\]

For $k\ge K+1$, we use only that $E_k(T)\subset \mathbb{R}^{3}$ and that $\mu$ is supported in $B(0,1)$. Indeed, for each $T\in\mathbb{T}(\Theta)$ one has
\[
\mu(E_k(T))\le \mu(B(0,1)).
\]
Thus
\[
\sum_{k=K+1}^{\infty} 2^{(\sigma+a-2N)k}\mu(E_k(T))
\le
\mu(B(0,1))
\sum_{k=K+1}^{\infty} 2^{(\sigma+a-2N)k}.
\]
Since $\sigma+a-2N<0$, the geometric series converges and
\[
\sum_{k=K+1}^{\infty} 2^{(\sigma+a-2N)k}\mu(E_k(T))
\le
C\,\mu(B(0,1))\,2^{(\sigma+a-2N)K}.
\]
Substituting this into the previous estimate for $\int |f_\Theta|^2\,\mathrm{d}\mu$, the large range is bounded by
\[
C\,M\,A^{2}\rho^{-2}\mu(B(0,1))\,2^{(\sigma+a-2N)K}
\sum_{T\in\mathbb{T}(\Theta)} |c_T|^2.
\]
Since $2^K\sim \rho^{-1}$, one has
\[
2^{(\sigma+a-2N)K}\lesssim \rho^{2N-a-\sigma}.
\]
And since $2N-a-\sigma>\beta$, it follows that
\[
\rho^{-2}\,2^{(\sigma+a-2N)K}\lesssim \rho^{\beta-2}.
\]
Consequently, the large range is bounded by
\[
C\,M\,A^{2}\mu(B(0,1))\,\rho^{\beta-2}
\sum_{T\in\mathbb{T}(\Theta)} |c_T|^2.
\]

Combining both ranges, we arrive at
\[
\int_{\mathbb{R}^{3}} |f_{\Theta}(x)|^{2}\,\mathrm{d}\mu(x)
\le
C\,M\,A^{2}\bigl(C_{\mathrm{proj}}+\mu(B(0,1))\bigr)\rho^{\beta-2}
\sum_{T\in\mathbb{T}(\Theta)} |c_{T}|^{2}.
\]
Since $\operatorname{supp}\mu\subset B(0,1)$ and the projective hypothesis holds for every $0<r\le 1$, for any direction $v_{\Theta}$,
\[
\mu(B(0,1))
=
(\pi_{v_{\Theta}})_{\#}\mu(\pi_{v_{\Theta}}(B(0,1)))
\le
(\pi_{v_{\Theta}})_{\#}\mu(B_{v_{\Theta}^{\perp}}(0,1))
\le
C_{\mathrm{proj}}.
\]
Here we used that the orthogonal projection maps $B(0,1)$ into $B_{v_{\Theta}^{\perp}}(0,1)$. Therefore,
\[
\int_{\mathbb{R}^{3}} |f_{\Theta}(x)|^{2}\,\mathrm{d}\mu(x)
\le
C\,M\,A^{2}C_{\mathrm{proj}}\,\rho^{\beta-2}
\sum_{T\in\mathbb{T}(\Theta)} |c_{T}|^{2}.
\]
Finally, summing in $\Theta$, we obtain
\[
\int_{\mathbb{R}^{3}} \sum_{\Theta} |f_{\Theta}(x)|^{2}\,\mathrm{d}\mu(x)
\le
C\,M\,A^{2}C_{\mathrm{proj}}\,\rho^{\beta-2}
\sum_{\Theta}\sum_{T\in\mathbb{T}(\Theta)} |c_{T}|^{2},
\]
as required.
\end{proof}

\begin{cor}[translation to the fixed parabolic scale]\label{cor:diagonal-under-projection}
Under the hypotheses of Proposition~\ref{prop:diagonal-under-projection}, if in addition
\[
\rho=R^{-1/2},
\]
then
\[
\int_{\mathbb{R}^{3}} \sum_{\Theta} |f_{\Theta}(x)|^{2}\,\mathrm{d}\mu(x)
\le
C\,M\,A^{2}C_{\mathrm{proj}}\,R^{(2-\beta)/2}
\sum_{\Theta}\sum_{T\in\mathbb{T}(\Theta)} |c_{T}|^{2}.
\]
In particular, if the wave packet decomposition satisfies
\[
\sum_{\Theta}\sum_{T\in\mathbb{T}(\Theta)} |c_{T}|^{2}\lesssim \sum_{\Theta}\|f_{\Theta}\|_{L^{2}(\mathbb{R}^{3})}^{2},
\]
then
\[
\int_{\mathbb{R}^{3}} \sum_{\Theta} |f_{\Theta}(x)|^{2}\,\mathrm{d}\mu(x)
\le
C\,M\,A^{2}C_{\mathrm{proj}}\,R^{(2-\beta)/2}
\sum_{\Theta}\|f_{\Theta}\|_{L^{2}(\mathbb{R}^{3})}^{2}.
\]
\end{cor}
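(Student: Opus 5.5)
The corollary is a direct substitution in Proposition~\ref{prop:diagonal-under-projection}, followed by one application of the assumed Bessel bound.

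First, I apply Proposition~\ref{prop:diagonal-under-projection} verbatim. All its hypotheses are assumed here: the normalization and pointwise decay of $\phi_T$, the dyadic overlap with constants $M,a,C_0$, the projective bound $(P_\beta)$, and $N>(\beta+a)/2$. This gives
\[
\int_{\mathbb{R}^{3}} \sum_{\Theta} |f_{\Theta}|^{2}\,\mathrm{d}\mu
\le
C\,M\,A^{2}C_{\mathrm{proj}}\,\rho^{\beta-2}
\sum_{\Theta}\sum_{T\in\mathbb{T}(\Theta)} |c_{T}|^{2},
\]
with $C$ depending only on $N,\beta,a$ and the tube constants. In particular, $C$ is independent of $\rho$, and hence of $R$.

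Second, I substitute $\rho=R^{-1/2}$. Then
\[
\rho^{\beta-2}=(R^{-1/2})^{\beta-2}=R^{(2-\beta)/2}.
\]
Since $\beta\le 2$, this exponent is nonnegative, so the factor is at least $1$ for $R\ge1$. This yields the first displayed inequality of the corollary.

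Third, for the second inequality I insert the assumed bound
\[
\sum_\Theta\sum_T|c_T|^2\lesssim\sum_\Theta\|f_\Theta\|_{L^2(\mathbb{R}^3)}^2
\]
into the right-hand side and absorb its implicit constant into $C$. When the representation comes from Proposition~\ref{prop:standard-wave-packet-decomposition}, this bound holds box by box, with a constant independent of $R$ and $\Theta$; summing over $\Theta$ gives it in the required form.

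There is no genuine obstacle. The only point to check is that no constant picks up a dependence on $R$ when passing from $\rho$ to $R$. This holds because the constant in Proposition~\ref{prop:diagonal-under-projection} depends only on $N$, $\beta$, $a$ and the tube geometry, and the Bessel constant is uniform in $R$ and $\Theta$.
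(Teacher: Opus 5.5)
Your proposal is correct and matches the paper's proof: apply Proposition~\ref{prop:diagonal-under-projection}, substitute $\rho^{\beta-2}=(R^{-1/2})^{\beta-2}=R^{(2-\beta)/2}$, and then insert the assumed coefficient bound, absorbing its implicit constant into $C$. Your remark that no constant picks up a dependence on $R$ is a sensible sanity check, and it is already guaranteed by the stated dependencies of $C$ in the proposition.
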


\begin{proof}
This is the conclusion of Proposition~\ref{prop:diagonal-under-projection} rewritten in the fixed parabolic normalization
\[
\rho=R^{-1/2}.
\]
Indeed,
\[
\rho^{\beta-2}=(R^{-1/2})^{\beta-2}=R^{(2-\beta)/2},
\]
and the first inequality follows by direct substitution. The second follows immediately from the additional hypothesis on the coefficients.
\end{proof}
\section{Main results and estimation mechanisms}\label{sec:resultados-esperados}

This section states the main estimates in the elliptic quadratic model fixed in Subsection~\ref{subsec:modelo-cuadratico-medida}. The local lower bound on $\chi$ near a tangency point is used only for the optimality statement.

\subsection{Main scale of the quadratic model}\label{subsec:quadratic-main-results}

The upper result assembled in Section~\ref{sec:reduccion-wave-packets} has,
in the elliptic model, the form
\[
\|\mathcal G_R f\|_{L^2(\mathrm{d}\mu_Q)}
\lesssim
\rho^{-1/4}\|f\|_{L^2(\mathbb R^3)}
=
R^{1/8}\|f\|_{L^2(\mathbb R^3)}.
\]
In the nontransversal part, the extreme strip is controlled by tubular Schur
using the tail mass estimate, while its complement is controlled by
degenerate projection and projective multiplicity.

This scale is attained by a tangent wave packet. More precisely, there
exists a tangent wave packet $\phi_T$, normalized by
\[
\|\phi_T\|_{L^2(\mathbb R^3)}\sim1,
\qquad
|\phi_T|\gtrsim \rho^{-1}
\quad\text{on }T_\rho,
\]
such that
\[
\|\phi_T\|_{L^2(\mathrm{d}\mu_Q)}
\gtrsim
\rho^{-1/4}\|\phi_T\|_{L^2(\mathbb R^3)}
=
R^{1/8}\|\phi_T\|_{L^2(\mathbb R^3)}.
\]
Thus the preceding upper bound is optimal in the elliptic quadratic model.

\begin{remark}\label{rem:quadratic-results-reading}
The projective estimates, transversal controls, and quadratic assembly identify the scale of the elliptic model. The loss $R^{1/8}$ comes from extreme tangential concentration: the geometric estimate is proved in Section~\ref{sec:bad-geometry}, and the analytic assembly is carried out in Section~\ref{sec:reduccion-wave-packets}.

The hyperbolic case ($\lambda_1\lambda_2<0$) is not part of the optimal result formulated here. The presence of real asymptotic directions may produce straight-line generators contained in the quadratic patch and therefore a different tubular regime. That case requires a separate formulation.
\end{remark}

Let $\Sigma\subset\mathbb R^3$ be a two-dimensional Lipschitz surface and let
$F:\Sigma\to\mathbb R^2$ be a Lipschitz map. At points where $\Sigma$ has an
approximate tangent plane $T_x\Sigma$ and $F$ is approximately differentiable
along $\Sigma$, the tangential Jacobian of $F$ on $\Sigma$ is defined by
\[
J_\Sigma F(x)
:=
\left(
\det\bigl((\mathrm d^\Sigma F(x))(\mathrm d^\Sigma F(x))^{*}\bigr)
\right)^{1/2},
\]
where
\[
\mathrm d^\Sigma F(x):T_x\Sigma\to\mathbb R^2
\]
is the tangential differential and $(\mathrm d^\Sigma F(x))^{*}$ denotes its
Euclidean adjoint. Equivalently, if $(e_1,e_2)$ is an orthonormal basis of
$T_x\Sigma$, then
\[
J_\Sigma F(x)
=
\left|
\partial_{e_1}^{\Sigma}F(x)
\wedge
\partial_{e_2}^{\Sigma}F(x)
\right|.
\]
For $y\in\mathbb R^2$, we denote by
\[
N(y,F,\Sigma)
:=
\#\{x\in\Sigma:F(x)=y\}
\]
the multiplicity function, with values in $\mathbb N\cup\{\infty\}$. In particular, if $\Sigma$ is oriented by a unit normal $n_\Sigma$ and $F=\pi_v|_\Sigma$, where $\pi_v:\mathbb R^3\to v^\perp$ is the orthogonal projection, then
\[
J_\Sigma \pi_v(x)=|n_\Sigma(x)\cdot v|
\]
at every point where $T_x\Sigma$ exists.

\begin{lem}[projective area formula bound]\label{lem:projective-area-formula-bound}

Let $\Sigma\subset\mathbb R^3$ be the graph of a Lipschitz function over a bounded planar domain, and let
\[
\mu=h\,\mathcal H^2\lfloor \Sigma,
\qquad
0\le h\le M.
\]
Let $v\in S^2$ and let $\pi_v:\mathbb R^3\to v^\perp$ be the orthogonal projection. Assume that there are constants $c_0>0$ and $N_0\ge1$ such that
\[
J_{\Sigma}\pi_v(x)\ge c_0
\qquad
\text{for $\mathcal H^2$-a.e. }x\in\Sigma
\]
and
\[
N(y,\pi_v,\Sigma)\le N_0
\qquad
\text{for $\mathcal H^2$-a.e. }y\in v^\perp.
\]
Then there exists a constant $C\ge1$, depending only on $c_0$, $M$, $N_0$, and the geometric constants of the graph, such that
\[
(\pi_v)_\#\mu(B_{v^\perp}(z,r))
\le
C r^2
\]
for every $z\in v^\perp$ and every $r>0$.
\end{lem}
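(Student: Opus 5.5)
The plan is to write the pushforward mass as an integral over the preimage and apply the area formula for the Lipschitz map $\pi_v|_\Sigma:\Sigma\to v^\perp$. Fix $z\in v^\perp$ and $r>0$, and set $B=B_{v^\perp}(z,r)$. By definition of the pushforward and the density bound $h\le M$,
\[
(\pi_v)_\#\mu(B)=\int_{\Sigma\cap\pi_v^{-1}(B)} h\,\mathrm d\mathcal H^2
\le M\,\mathcal H^2\bigl(\Sigma\cap\pi_v^{-1}(B)\bigr).
\]
This reduces the claim to bounding the surface area of the preimage of a disk of radius $r$.

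Next I will use the lower Jacobian bound. Since $J_\Sigma\pi_v\ge c_0$ holds $\mathcal H^2$-a.e. on $\Sigma$, we get
\[
\mathcal H^2\bigl(\Sigma\cap\pi_v^{-1}(B)\bigr)\le c_0^{-1}\int_{\Sigma\cap\pi_v^{-1}(B)} J_\Sigma\pi_v\,\mathrm d\mathcal H^2 .
\]
The area formula for Lipschitz maps from the $2$-rectifiable set $\Sigma$ into $\mathbb R^2$ then turns the right-hand side into
\[
c_0^{-1}\int_{B} N\bigl(y,\pi_v,\Sigma\cap\pi_v^{-1}(B)\bigr)\,\mathrm d\mathcal H^2(y).
\]
This is legitimate because $\Sigma$, being a Lipschitz graph over a bounded domain, is $2$-rectifiable with $\mathcal H^2(\Sigma)<\infty$, and it has an approximate tangent plane $\mathcal H^2$-a.e. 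Since the multiplicity of the restricted map is at most $N(y,\pi_v,\Sigma)\le N_0$ for a.e. $y$, the integral is bounded by $N_0\,\mathcal H^2(B)=N_0\pi r^2$.

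Combining the two steps gives $(\pi_v)_\#\mu(B)\le \pi M N_0 c_0^{-1} r^2$, so $C=\pi MN_0/c_0$ suffices. This holds for every $r>0$ with no restriction to $r\le1$. The graph constants are needed only to guarantee rectifiability and finiteness of $\mathcal H^2(\Sigma)$, so no large-$r$ case split is required.

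The only delicate point is justifying the area formula. One must check that the Jacobian in the statement agrees with the one appearing in the formula for maps on rectifiable sets. That formula uses the tangential differential on the approximate tangent plane, which exists a.e. It must also be checked that the null sets where the hypotheses fail contribute nothing. For this I will use that Lipschitz maps send $\mathcal H^2$-null sets to $\mathcal H^2$-null sets. I will also use that the exceptional $y$-set for the multiplicity bound has measure zero in $v^\perp$, so it does not affect the $y$-integral.
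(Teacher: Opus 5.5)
Your proposal is correct and follows the paper's proof step for step: you bound the pushforward by $M\,\mathcal H^2(\Sigma\cap\pi_v^{-1}(B))$, insert $1\le c_0^{-1}J_\Sigma\pi_v$, apply the area formula to $\pi_v|_\Sigma$, and use the multiplicity bound to obtain $N_0\,\mathcal H^2(B)\lesssim N_0 r^2$. The explicit constant $C=\pi M N_0/c_0$ and your handling of the exceptional null sets are somewhat more careful than the paper's write-up, but the argument is the same.
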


\begin{proof}
Let $B\subset v^\perp$ be a ball. Since $0\le h\le M$,
\[
(\pi_v)_\#\mu(B)
=
\mu(\pi_v^{-1}(B))
\le
M\,\mathcal H^2(\Sigma\cap \pi_v^{-1}(B)).
\]
On $\Sigma$ one has
\[
1\le c_0^{-1}J_{\Sigma}\pi_v(x)
\]
for $\mathcal H^2$-a.e. $x$. Hence
\[
\mathcal H^2(\Sigma\cap \pi_v^{-1}(B))
\le
c_0^{-1}
\int_{\Sigma\cap \pi_v^{-1}(B)}
J_{\Sigma}\pi_v(x)\,\mathrm d\mathcal H^2(x).
\]
By the area formula for Lipschitz mappings, applied to $\pi_v|_\Sigma$,
\[
\int_{\Sigma\cap \pi_v^{-1}(B)}
J_{\Sigma}\pi_v(x)\,\mathrm d\mathcal H^2(x)
=
\int_B
N(y,\pi_v,\Sigma)\,\mathrm d\mathcal H^2_{v^\perp}(y).
\]
See, for instance, \cite[Chapter 3]{EvansGariepy2015MeasureTheory}. Using the multiplicity bound,
\[
\int_B
N(y,\pi_v,\Sigma)\,\mathrm d\mathcal H^2_{v^\perp}(y)
\le
N_0\mathcal H^2_{v^\perp}(B)
\lesssim
N_0 r^2.
\]
Combining the preceding estimates gives the claim.
\end{proof}

\subsection{Tubular/projective mechanism}\label{subsec:natural-projective-route}

The projective mechanism controls the energy of a wave packet by the mass of the orthogonal projection of the measure onto the plane perpendicular to the axis of the tube. If $T$ is a tube of radius $\rho$ and direction $v$, then $\pi_v(T)$ is contained in a ball of radius comparable to $\rho$ in $v^\perp$. Thus the growth of $(\pi_v)_\#\mu$ directly controls the tubular energy.

\begin{lem}\label{lem:tube-projection}
Let $\rho\in(0,1]$, let $v\in S^2$, and let $\pi_v:\mathbb R^3\to v^\perp$ be the orthogonal projection onto the plane perpendicular to $v$. Let $T\subset\mathbb R^3$ be a tube of length comparable to $1$ and radius $\rho$, whose axis is parallel to $v$. Then there exists $y_T\in v^\perp$ such that
\[
T\subset \pi_v^{-1}\bigl(B_{v^\perp}(y_T,C\rho)\bigr),
\]
where $C\ge1$ depends only on the geometric convention used for tubes. Consequently, for every positive Borel measure $\mu$ on $\mathbb R^3$,
\[
\mu(T)\le (\pi_v)_\#\mu\bigl(B_{v^\perp}(y_T,C\rho)\bigr).
\]
\end{lem}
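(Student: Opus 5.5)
The claim is elementary; I would prove it directly from the definition of a tube, then take measures.

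\emph{Step 1: the inclusion.} Write the tube in the coordinates adapted to $v$. By the convention used throughout (Proposition~\ref{prop:standard-wave-packet-decomposition}, Lemma~\ref{lem:dyadic-overlap-standard}), a tube of radius $\rho$ and direction $v$ has the form
\[
T=\{y+sv:\ y\in v^\perp,\ |y-y_T|\le C\rho,\ s\in I\},
\]
where $y_T\in v^\perp$ is the point where the axis meets $v^\perp$, and $I$ is an interval of length comparable to $1$. If the tube is given instead as a $C\rho$-neighbourhood of an axis segment $\{x_0+sv: s\in I\}$, I would set $y_T:=\pi_v x_0$. For $x=y+sv\in T$ one has $\pi_v x=y$, because $\pi_v$ annihilates $v$ and fixes $v^\perp$. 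Hence $|\pi_v x-y_T|\le C\rho$. In the neighbourhood form, $\pi_v$ is $1$-Lipschitz and maps the axis segment to the single point $y_T$, so $|\pi_v x-y_T|\le\operatorname{dist}(x,\text{axis})\le C\rho$. In either form $T\subset\pi_v^{-1}(B_{v^\perp}(y_T,C\rho))$, with $C$ depending only on the tube convention and not on the length of $I$.

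\emph{Step 2: the measure bound.} By monotonicity of $\mu$ and the definition of the pushforward,
\[
\mu(T)\le\mu\bigl(\pi_v^{-1}(B_{v^\perp}(y_T,C\rho))\bigr)=(\pi_v)_\#\mu\bigl(B_{v^\perp}(y_T,C\rho)\bigr).
\]
Both sets are Borel: the ball is closed and $\pi_v$ is continuous.

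There is no real obstacle here. The only point needing care is to state which tube convention is being used, so that $C$ is visibly independent of $\rho$ and of the longitudinal length. For tubes whose direction is only within $O(\rho)$ of $v$, one would first apply Lemma~\ref{lem:central-direction-tube-stability} and then run the same argument on $C\widetilde T$.
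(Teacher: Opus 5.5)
Your proposal is correct and follows the paper's argument. Both identify $y_T$ as the point where the axis line meets $v^\perp$, bound $|\pi_v x-y_T|$ by the distance from $x$ to the axis (hence by $C\rho$), and conclude by monotonicity and the definition of the pushforward; your remarks on Borel measurability and on nearly parallel tubes are harmless additions.
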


\begin{proof}
Let $\ell_T$ be the axis of $T$. Since $\ell_T$ is a line parallel to $v$, there exists a unique point $y_T\in v^\perp$ such that
\[
\ell_T=\{y_T+tv:t\in\mathbb R\}.
\]
If $x\in T$, then
\[
|\pi_v(x)-y_T|=\operatorname{dist}(x,\ell_T)\le C\rho.
\]
Therefore,
\[
\pi_v(T)\subset B_{v^\perp}(y_T,C\rho),
\]
and the claim follows from the definition of pushforward.
\end{proof}

\begin{lem}[reduction of tubular energy to a projection]\label{lem:wave-packet-projection}
Let $\rho\in(0,1]$, let $v\in S^2$, and let $\pi_v:\mathbb R^3\to v^\perp$ be the orthogonal projection. Let $T\subset\mathbb R^3$ be a tube of length comparable to $1$ and radius $\rho$, whose axis is parallel to $v$. Let $\phi_T\in L^2(\mathbb R^3)$ be such that
\[
|\phi_T(x)|\le A\rho^{-1}
\left(1+\frac{\operatorname{dist}(x,T)}{\rho}\right)^{-N}
\]
for some constants $A\ge1$ and $N>2$. Then
\[
\int_{\mathbb R^3}|\phi_T(x)|^2\,\mathrm{d}\mu(x)
\le
C A^2\rho^{-2}
\sum_{k=0}^{\infty}2^{-2Nk}
(\pi_v)_\#\mu\bigl(B_{v^\perp}(y_T,C2^k\rho)\bigr),
\]
where $y_T\in v^\perp$ is the point associated with the axis of $T$ in Lemma~\ref{lem:tube-projection}, and $C$ depends only on $N$ and on the geometric convention for tubes.
\end{lem}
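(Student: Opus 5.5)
The statement to prove is Lemma~\ref{lem:wave-packet-projection}. The plan is to decompose $\mathbb R^3$ into dyadic shells around $T$ and to control the $\mu$-mass of each shell by Lemma~\ref{lem:tube-projection}, applied to dilated tubes.

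\textbf{Step 1 (shells).} As in the proof of Proposition~\ref{prop:diagonal-under-projection}, set
\[
E_0:=\{x:\operatorname{dist}(x,T)\le\rho\},\qquad
E_k:=\{x:2^{k-1}\rho<\operatorname{dist}(x,T)\le 2^k\rho\}\quad(k\ge1).
\]
These sets partition $\mathbb R^3$. On $E_k$ we have $1+\operatorname{dist}(x,T)/\rho\ge 2^{k-1}$. The pointwise hypothesis then gives
\[
|\phi_T(x)|^2\le 2^{2N}A^2\rho^{-2}2^{-2Nk}\quad\text{on }E_k,
\]
so
\[
\int|\phi_T|^2\,\mathrm d\mu\le 2^{2N}A^2\rho^{-2}\sum_{k\ge0}2^{-2Nk}\mu(E_k).
\]

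\textbf{Step 2 (projecting the shells).} The obstacle is that $E_k$ is the full $2^k\rho$-neighbourhood of $T$. This neighbourhood is not a tube of length comparable to $1$, because it has grown longitudinally by $2^k\rho$. However, its projection is still controlled: every $x\in E_k$ satisfies
\[
|\pi_v(x)-y_T|=\operatorname{dist}(x,\ell_T)\le \operatorname{dist}(x,T)+C\rho\le C2^k\rho,
\]
since $T$ lies within $C\rho$ of its axis $\ell_T$. The longitudinal growth is thus invisible after applying $\pi_v$. Hence
\[
E_k\subset\pi_v^{-1}\bigl(B_{v^\perp}(y_T,C2^k\rho)\bigr),
\qquad
\mu(E_k)\le(\pi_v)_\#\mu\bigl(B_{v^\perp}(y_T,C2^k\rQ)\bigr).
\]
This is exactly the argument of Lemma~\ref{lem:tube-projection} with the radius $C\rho$ replaced by $C2^k\rho$.

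\textbf{Step 3 (conclusion).} Substituting the bound of Step 2 into the sum of Step 1 gives the claimed inequality, with constant $C=2^{2N}$ times the geometric constant. No restriction to $r\le1$ is needed, and neither convergence nor the condition $N>2$ is used beyond finiteness of the right-hand side. That condition matters only later, when the projective bound $r^\beta$ is inserted.

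I expect Step 2 to be the only point requiring care: distance to $T$ must be compared with distance to its axis. This comparison is what makes the longitudinal extent of the shells irrelevant after projection.
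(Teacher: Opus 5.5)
Your proof is correct and is essentially the paper's argument. The paper first uses $\operatorname{dist}(x,T)\ge c\,\operatorname{dist}(\pi_v(x),\pi_v(T))$ together with $\pi_v(T)\subset B_{v^\perp}(y_T,C\rho)$ to turn the bound into a function of $\pi_v(x)$, and then decomposes $v^\perp$ into dyadic annuli around $y_T$. You instead decompose $\mathbb R^3$ into shells by distance to $T$ and project each shell; this uses the same $1$-Lipschitz/containment fact in the opposite order. Your remark that neither $N>2$ nor the restriction $r\le1$ is needed for this inequality is also correct.
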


\begin{proof}
By Lemma~\ref{lem:tube-projection}, $\pi_v(T)\subset B_{v^\perp}(y_T,C_0\rho)$. Moreover, there exists an absolute constant $c>0$ such that
\[
\operatorname{dist}(x,T)\ge c\,\operatorname{dist}(\pi_v(x),\pi_v(T)).
\]
Hence
\[
|\phi_T(x)|^2
\le
C A^2\rho^{-2}
\left(1+\frac{\operatorname{dist}(\pi_v(x),\pi_v(T))}{\rho}\right)^{-2N}.
\]
Integrating with respect to $\mu$ and using the definition of pushforward, the decomposition of $v^\perp$ into annuli centered at $y_T$ at scales $2^k\rho$ gives the stated sum.
\end{proof}

\begin{cor}[projective bound for one wave packet]\label{cor:wave-packet-projection-frostman}
Under the hypotheses of Lemma~\ref{lem:wave-packet-projection}, suppose that there exist $\beta\in(0,2]$ and $C_{\mathrm{proj}}>0$ such that
\[
(\pi_v)_\#\mu\bigl(B_{v^\perp}(z,r)\bigr)
\le
C_{\mathrm{proj}}r^\beta
\]
for every $z\in v^\perp$ and every $0<r\le1$. Suppose also that $\mu$ is supported on a set of diameter bounded by a fixed geometric constant, or else that the estimate is applied locally after restricting $\mu$ to a fixed compact patch. Then, if $N>\beta/2$,
\[
\int_{\mathbb R^3}|\phi_T(x)|^2\,\mathrm{d}\mu(x)
\le
C A^2 C_{\mathrm{proj}}\rho^{\beta-2},
\]
where $C$ depends only on $N$, on $\beta$, on the geometric convention for the tubes, and on the local support constant under consideration.
\end{cor}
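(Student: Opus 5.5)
The plan is to reduce the corollary to Lemma~\ref{lem:wave-packet-projection} and then sum the resulting dyadic series using the projective hypothesis at small scales and the bounded support at large scales. Lemma~\ref{lem:wave-packet-projection} gives
\[
\int_{\mathbb R^3}|\phi_T|^2\,\mathrm d\mu
\le
C A^2\rho^{-2}\sum_{k\ge0}2^{-2Nk}(\pi_v)_\#\mu\bigl(B_{v^\perp}(y_T,C2^k\rho)\bigr).
\]
The argument mirrors the two-range splitting in the proof of Proposition~\ref{prop:diagonal-under-projection}, without the overlap sum over tubes.

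First I fix the integer $K\ge0$ with $C2^K\rho\le1<C2^{K+1}\rho$. If $C\rho>1$, the first range is empty and everything goes into the second range. For $0\le k\le K$ the radius $C2^k\rho$ is at most $1$, so the projective hypothesis applies and bounds the $k$-th term by $C_{\mathrm{proj}}C^\beta 2^{\beta k}\rho^\beta$. Summing gives
\[
C_{\mathrm{proj}}\rho^\beta\sum_{k\ge0}2^{(\beta-2N)k}\lesssim C_{\mathrm{proj}}\rho^\beta,
\]
since $N>\beta/2$. After the prefactor $A^2\rho^{-2}$, this range contributes $A^2C_{\mathrm{proj}}\rho^{\beta-2}$.

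For $k>K$, I bound each pushforward mass by the total mass $\mu(\mathbb R^3)$. The support assumption lets me cover $\operatorname{supp}\mu$, whose diameter is bounded by a fixed constant $D$, by one ball $B(x_0,D)$. Its projection lies in a ball of radius $D$ in $v^\perp$, which is covered by $O_D(1)$ balls of radius $1$. The projective hypothesis with $r=1$ then gives $\mu(\mathbb R^3)\lesssim_D C_{\mathrm{proj}}$. The tail sum is therefore at most $C_{\mathrm{proj}}2^{-2NK}\lesssim C_{\mathrm{proj}}\rho^{2N}$, using $2^K\sim\rho^{-1}$. Since $2N>\beta$ and $\rho\le1$, we have $\rho^{2N}\le\rho^\beta$. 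With the prefactor this range contributes at most $A^2C_{\mathrm{proj}}\rho^{\beta-2}$ as well.

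The only delicate point is this large-scale range. The projective hypothesis is stated only for $r\le1$, so the total mass must be controlled through the bounded-diameter covering; that is where the dependence on the local support constant enters. In the locally restricted version, I apply the same covering argument to $\mu$ restricted to the fixed compact patch. Adding the two ranges gives the claimed bound $CA^2C_{\mathrm{proj}}\rho^{\beta-2}$. The hypothesis $N>2$ from Lemma~\ref{lem:wave-packet-projection} is still needed; it is stronger than $N>\beta/2$ and ensures the lemma applies.
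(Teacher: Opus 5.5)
Your proposal is correct and follows the paper's route: apply Lemma~\ref{lem:wave-packet-projection}, use the projective hypothesis on the scales where the ball radius is at most $1$, and sum the geometric series using $2N>\beta$. For the large-scale range you bound the total mass by $O_D(1)\,C_{\mathrm{proj}}$, covering the projected support by unit balls. The paper only says those terms are absorbed into the local constant, so your argument makes this explicit and keeps the final bound in the multiplicative form $CA^2C_{\mathrm{proj}}\rho^{\beta-2}$, as at the end of the proof of Proposition~\ref{prop:diagonal-under-projection}.
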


\begin{proof}
Apply Lemma~\ref{lem:wave-packet-projection}. In the effective range $2^k\rho\le1$, the projective hypothesis gives
\[
(\pi_v)_\#\mu\bigl(B_{v^\perp}(y_T,C2^k\rho)\bigr)
\le
C C_{\mathrm{proj}}(2^k\rho)^\beta.
\]
Therefore,
\[
\int_{\mathbb R^3}|\phi_T(x)|^2\,\mathrm{d}\mu(x)
\le
C A^2 C_{\mathrm{proj}}\rho^{\beta-2}
\sum_{k\ge0}2^{(\beta-2N)k}.
\]
The series converges because $2N>\beta$. The terms with $2^k\rho>1$ are absorbed into the local constant associated with the patch under consideration.
\end{proof}

\begin{remark}\label{rem:natural-projective-route-limitation}
The projective mechanism produces a uniform quadratic cost when the effective projective exponent is $\beta=2$. In the extreme tangential regime of the quadratic model, the relevant mass is instead
\[
\mu_Q(S_Q\cap T_\rho\cap E_v^{\mathrm{ext}})
\sim \rho^{3/2},
\]
and for a normalized wave packet one has
\[
|\phi_T|^2\sim \rho^{-2}.
\]
The resulting energy has size $\rho^{-1/2}$, corresponding to the norm scale $\rho^{-1/4}=R^{1/8}$.
\end{remark}

\subsection{Transversal projective regime}\label{subsec:transversal-projective-regime}

The transversal regime is the case in which the orthogonal projection to the direction of the tube preserves area on the surface. In that case the effective projective exponent is $\beta=2$, and Corollary~\ref{cor:wave-packet-projection-frostman} produces no scale loss.

\begin{lem}[transversal projection of a surface measure]\label{lem:surface-projection-transverse}
Let $U\subset\mathbb R^2$ be a bounded open set and let $\varphi:U\to\mathbb R$ be a Lipschitz function. Define
\[
\Sigma:=\{(u,\varphi(u)):u\in U\}\subset\mathbb R^3.
\]
Let $\mu=h\,\mathcal H^2\lfloor\Sigma$, where $h:\Sigma\to[0,\infty)$ satisfies
\[
\|h\|_{L^\infty(\Sigma)}\le M.
\]
Let $v\in S^2$ and let $\pi_v:\mathbb R^3\to v^\perp$ be the orthogonal projection. Suppose that there exists a measurable unit normal $n_\Sigma:\Sigma\to S^2$, defined $\mathcal H^2$-almost everywhere, such that
\[
|n_\Sigma(x)\cdot v|\ge c_0>0
\qquad\text{for }\mathcal H^2\text{-almost every }x\in\Sigma.
\]
Suppose also that the map
\[
G:=\pi_v\circ F:U\to v^\perp,
\qquad
F(u)=(u,\varphi(u)),
\]
has multiplicity bounded by $N_0$, that is,
\[
\#G^{-1}(\{y\})\le N_0
\qquad\text{for almost every }y\in v^\perp.
\]
Then there exists a constant $C\ge1$, depending only on $c_0$, on $\operatorname{Lip}(\varphi)$, on $M$, and on $N_0$, such that
\[
(\pi_v)_\#\mu\bigl(B_{v^\perp}(z,r)\bigr)
\le C r^2
\]
for every $z\in v^\perp$ and every $r>0$.
\end{lem}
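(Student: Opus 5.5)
The plan is to reduce this statement to Lemma~\ref{lem:projective-area-formula-bound}, whose hypotheses are a lower bound on the tangential Jacobian and a bound on the multiplicity. The multiplicity bound is assumed here. The surface $\Sigma$ is the graph of the Lipschitz function $\varphi$ over the bounded domain $U$, so it fits the setting of that lemma. What remains is to identify $J_\Sigma\pi_v$ with $|n_\Sigma\cdot v|$ almost everywhere and to translate the multiplicity assumption on $G$ into one on $\pi_v|_\Sigma$.

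\textbf{Step 1: tangent planes and the Jacobian.} By Rademacher's theorem, $\varphi$ is differentiable at almost every $u\in U$. At such a point, $\Sigma$ has the tangent plane spanned by $(e_i,\partial_i\varphi(u))$, and the normal is $\pm(-\nabla\varphi,1)/\sqrt{1+|\nabla\varphi|^2}$. Since $F$ is bi-Lipschitz onto $\Sigma$, these points have full $\mathcal H^2$ measure in $\Sigma$, and there the given $n_\Sigma$ agrees with this normal up to sign. For an orthonormal basis $(e_1,e_2)$ of $T_x\Sigma$, the vectors $\pi_v e_1,\pi_v e_2$ lie in $v^\perp$, and a direct computation gives
\[
J_\Sigma\pi_v(x)=|\pi_ve_1\wedge\pi_ve_2|=|\det(e_1,e_2,v)|=|n_\Sigma(x)\cdot v|.
\]
The hypothesis then yields $J_\Sigma\pi_v\ge c_0$ for $\mathcal H^2$-a.e. $x\in\Sigma$.

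\textbf{Step 2: multiplicity.} Since $F:U\to\Sigma$ is a bijection, $\#\{x\in\Sigma:\pi_v(x)=y\}=\#G^{-1}(\{y\})$. The assumed bound is therefore exactly $N(y,\pi_v,\Sigma)\le N_0$ for a.e. $y\in v^\perp$.

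\textbf{Step 3: conclusion.} Applying Lemma~\ref{lem:projective-area-formula-bound} with $h$ and $M$ gives $(\pi_v)_\#\mu(B_{v^\perp}(z,r))\le Cr^2$ for all $z\in v^\perp$ and all $r>0$. As a self-contained check, one can also argue directly. The area formula for $G$ on $U$ gives
\[
\int_{G^{-1}(B)}J G\,\mathrm du=\int_B\#G^{-1}(\{y\})\,\mathrm dy\le N_0\pi r^2.
\]
Moreover $JG(u)=|n_\Sigma\cdot v|\,\sqrt{1+|\nabla\varphi|^2}\ge c_0$. Combining these with
\[
\mu(\pi_v^{-1}B)\le M\sqrt{1+\operatorname{Lip}(\varphi)^2}\,|G^{-1}(B)|
\]
bounds $\mu(\pi_v^{-1}B)$ by $C r^2$. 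This direct route shows that $C$ depends only on $c_0$, $\operatorname{Lip}(\varphi)$, $M$ and $N_0$, as claimed.

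\textbf{Main obstacle.} The only delicate point is the almost-everywhere bookkeeping in Step 1. One must check that the measurable normal in the hypothesis coincides with the Rademacher normal on a set of full $\mathcal H^2$ measure, and that $\mathcal H^2$-null sets on $\Sigma$ correspond to Lebesgue-null sets in $U$. Both follow from the bi-Lipschitz property of $F$.
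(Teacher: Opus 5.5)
Your proposal is correct and follows the paper's proof. Like the paper, you reduce to Lemma~\ref{lem:projective-area-formula-bound}, using injectivity of $F$ to identify $N(y,\pi_v,\Sigma)$ with $\#G^{-1}(\{y\})$ and the identity $J_\Sigma\pi_v=|n_\Sigma\cdot v|$ to get $J_\Sigma\pi_v\ge c_0$ almost everywhere. Your additional direct check through the area formula for $G$ on $U$ is also valid; there you could even drop the $\operatorname{Lip}(\varphi)$ factor by using $JG\ge c_0\sqrt{1+|\nabla\varphi|^2}$, which cancels the surface Jacobian.
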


\begin{proof}
Let
\[
F:U\to\Sigma,
\qquad
F(u)=(u,\varphi(u)).
\]
Since $\Sigma$ is a graph, $F$ is one-to-one and parametrizes $\Sigma$. Hence, for
\[
G=\pi_v\circ F,
\]
one has
\[
N(y,\pi_v,\Sigma)
=
\#G^{-1}(\{y\})
\]
for $\mathcal H^2$-a.e. $y\in v^\perp$, up to the harmless convention that both sides may be infinite. Therefore the assumed multiplicity bound for $G$ gives
\[
N(y,\pi_v,\Sigma)\le N_0
\]
for $\mathcal H^2$-a.e. $y\in v^\perp$.

Apply Lemma~\ref{lem:projective-area-formula-bound} to the graph $\Sigma$, the measure
\[
\mu=h\,\mathcal H^2\lfloor\Sigma,
\]
and the projection $\pi_v$. Since
\[
J_{\Sigma}\pi_v(x)=|n_\Sigma(x)\cdot v|
\]
for $\mathcal H^2$-a.e. $x\in\Sigma$, the transversality hypothesis gives
\[
J_{\Sigma}\pi_v(x)\ge c_0
\]
for $\mathcal H^2$-a.e. $x\in\Sigma$. The conclusion follows.
\end{proof}

\begin{cor}[transversal wave packet]\label{cor:surface-wave-packet-transverse}
Under the hypotheses of Lemma~\ref{lem:surface-projection-transverse}, let $\rho\in(0,1]$ and let $T$ be a tube of length comparable to $1$ and radius $\rho$, with axis parallel to $v$. Let $\phi_T$ be a function such that
\[
|\phi_T(x)|\le A\rho^{-1}
\left(1+\frac{\operatorname{dist}(x,T)}{\rho}\right)^{-N}
\]
for some constants $A\ge1$ and $N>2$. Then
\[
\int_{\mathbb R^3}|\phi_T(x)|^2\,\mathrm{d}\mu(x)
\le C A^2,
\]
where $C$ depends only on $N$, on $c_0$, on $\operatorname{Lip}(\varphi)$, on $M$, and on $N_0$.
\end{cor}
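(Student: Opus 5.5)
This follows directly from Lemma~\ref{lem:surface-projection-transverse} combined with Corollary~\ref{cor:wave-packet-projection-frostman} at the projective exponent $\beta=2$.

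\textbf{Step 1.} Lemma~\ref{lem:surface-projection-transverse} gives
\[
(\pi_v)_\#\mu\bigl(B_{v^\perp}(z,r)\bigr)\le C_{\mathrm{proj}}r^2
\]
for every $z\in v^\perp$ and every $r>0$. Here $C_{\mathrm{proj}}$ depends only on $c_0$, $\operatorname{Lip}(\varphi)$, $M$ and $N_0$. This is the projective hypothesis with $\beta=2$.

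\textbf{Step 2.} Apply Lemma~\ref{lem:wave-packet-projection} to $\phi_T$. This is legitimate because $N>2$ and the axis of $T$ is parallel to $v$. It yields
\[
\int|\phi_T|^2\,\mathrm d\mu\le CA^2\rho^{-2}\sum_{k\ge0}2^{-2Nk}(\pi_v)_\#\mu\bigl(B_{v^\perp}(y_T,C2^k\rho)\bigr).
\]

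\textbf{Step 3.} Insert the bound from Step 1 at radius $r=C2^k\rho$. The bound holds for all $r>0$, not only $r\le1$, so no large-$k$ splitting is needed. The right-hand side becomes
\[
CA^2C_{\mathrm{proj}}\rho^{-2}\rho^{2}\sum_{k\ge0}2^{(2-2N)k}.
\]
This series converges because $N>1$, which is implied by $N>2$. Hence the total is at most $CA^2$, with no power of $\rho$.

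The constant depends only on $N$, the tube convention, and $C_{\mathrm{proj}}$, and therefore only on the parameters listed in the statement. No support or diameter restriction on $\mu$ is needed, since Step 1 controls balls of every radius. This is slightly better than a direct application of Corollary~\ref{cor:wave-packet-projection-frostman}, which would absorb the terms with $2^k\rho>1$ into a local support constant.

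The only point needing care is Step 2: the claimed inequality $\operatorname{dist}(x,T)\ge c\operatorname{dist}(\pi_v x,\pi_v T)$ in Lemma~\ref{lem:wave-packet-projection}. Since $\pi_v$ is $1$-Lipschitz, $\operatorname{dist}(\pi_v x,\pi_v T)\le\operatorname{dist}(x,T)$, which is exactly the direction needed to bound the tail of $\phi_T$ by its projected distance. So this step goes through with $c=1$.
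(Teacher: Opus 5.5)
Your proof is correct and is essentially the paper's argument: Lemma~\ref{lem:surface-projection-transverse} supplies the projective bound with $\beta=2$, and this is fed into the tubular projection estimate. The one difference is that you apply Lemma~\ref{lem:wave-packet-projection} directly and use the projective bound at every radius $r>0$, which that lemma does provide. The paper instead passes through Corollary~\ref{cor:wave-packet-projection-frostman}, whose absorption of the range $2^k\rho>1$ brings in a support or total-mass constant for $\mu$. Your version therefore yields exactly the dependencies listed in the statement. Your justification of $\operatorname{dist}(\pi_v x,\pi_v T)\le\operatorname{dist}(x,T)$ via the $1$-Lipschitz property of $\pi_v$ is the right one.
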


\begin{proof}
By Lemma~\ref{lem:surface-projection-transverse}, the projected measure $(\pi_v)_\#\mu$ satisfies a growth bound with exponent $\beta=2$. Corollary~\ref{cor:wave-packet-projection-frostman} gives
\[
\int_{\mathbb R^3}|\phi_T(x)|^2\,\mathrm{d}\mu(x)
\le
C A^2\rho^{2-2}=C A^2.
\]
\end{proof}

\begin{remark}\label{rem:surface-transverse}
The transversal regime produces uniform quadratic control because the orthogonal projection preserves area with a positive lower Jacobian bound and bounded multiplicity. This mechanism accounts for the lossless part of the projective argument. The loss in the quadratic model comes from the extreme tangential regime described in Section~\ref{sec:bad-geometry}.
\end{remark}
\section{Geometry of tangential concentration}\label{sec:bad-geometry}

This section proves the local geometric estimate responsible for tangential concentration in the nondegenerate quadratic model. We keep the notation fixed in Subsection~\ref{subsec:modelo-cuadratico-medida}.

For a direction $v\in S^2$, define the extreme strip by
\[
E_v^{\mathrm{ext}}
:=
\{x\in S_Q: |n_Q(x)\cdot v|\le \rho^{1/2}\},
\]
where $n_Q$ is the unit normal field of $S_Q$.

The main geometric point is that a tangent tube may capture surface mass of
order $\rho^{3/2}$ inside this strip.

\subsection{Quadratic normal and the scale of the extreme strip}\label{subsec:quadratic-tangency-geometry}

\begin{lem}[normal of the quadratic patch]\label{lem:quadratic-normal-linearization}
Let
\[
X(u_1,u_2)=(u_1,u_2,Q(u_1,u_2)).
\]
Then the unit normal field can be written as
\[
n_Q(u)
=
\frac{(-\lambda_1u_1,-\lambda_2u_2,1)}
{\sqrt{1+\lambda_1^2u_1^2+\lambda_2^2u_2^2}}.
\]
In particular, if $v_0=e_1$ is a tangent direction at the origin, then, for $u$ in a sufficiently small neighbourhood of the origin,
\[
|n_Q(u)\cdot v_0|\sim |u_1|,
\]
with constants depending only on $Q$ and on the size of the chart.
\end{lem}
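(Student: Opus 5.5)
The formula for $n_Q$ comes from the graph parametrization $X(u)=(u_1,u_2,Q(u))$. The plan is to compute the two tangent vectors, take their cross product, normalize it, and then specialize to $v_0=e_1$.

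\emph{Tangent vectors and the normal.} Differentiating gives
\[
\partial_{u_1}X=(1,0,\lambda_1u_1),
\qquad
\partial_{u_2}X=(0,1,\lambda_2u_2).
\]
Their cross product is
\[
\partial_{u_1}X\times\partial_{u_2}X=(-\lambda_1u_1,-\lambda_2u_2,1),
\]
since $\partial_{u_1}Q=\lambda_1u_1$ and $\partial_{u_2}Q=\lambda_2u_2$. This vector has Euclidean length $\sqrt{1+\lambda_1^2u_1^2+\lambda_2^2u_2^2}$. Dividing by this length gives the stated unit normal field; it is the upward choice of orientation, and the opposite sign only changes $n_Q\cdot v$ by a sign. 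At the origin $n_Q(0)=e_3$, so $e_1$ is indeed tangent to $S_Q$ there.

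\emph{Two-sided bound for $|n_Q\cdot v_0|$.} Taking the inner product with $v_0=e_1$ gives
\[
|n_Q(u)\cdot e_1|=\frac{|\lambda_1|\,|u_1|}{\sqrt{1+\lambda_1^2u_1^2+\lambda_2^2u_2^2}}.
\]
The hypothesis $\lambda_1\lambda_2>0$ forces $\lambda_1\neq0$. On any bounded neighbourhood $|u|\le r_0$, for instance the chart $U_0$, the denominator satisfies
\[
1\le \sqrt{1+\lambda_1^2u_1^2+\lambda_2^2u_2^2}\le \sqrt{1+(\lambda_1^2+\lambda_2^2)r_0^2}.
\]
Hence
\[
\frac{|\lambda_1|}{\sqrt{1+(\lambda_1^2+\lambda_2^2)r_0^2}}\,|u_1|\le|n_Q(u)\cdot e_1|\le|\lambda_1|\,|u_1|.
\]
This is the claim $|n_Q(u)\cdot v_0|\sim|u_1|$, with constants depending only on $\lambda_1,\lambda_2$ and $r_0$.

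The argument has no real obstacle. The only point to flag is that the implicit constants degenerate when $\lambda_1\to0$ or when the chart grows, which matches the non-uniformity caveat in Subsection~\ref{subsec:modelo-cuadratico-medida}. The same computation, with $e_2$ and $\lambda_2$ in place of $e_1$ and $\lambda_1$, handles the other principal tangent direction.
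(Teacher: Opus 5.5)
Your proposal is correct and follows the paper's proof: you compute $\partial_{u_1}X\times\partial_{u_2}X=(-\lambda_1u_1,-\lambda_2u_2,1)$, normalize it, pair with $e_1$, and use $\lambda_1\neq0$ together with two-sided bounds on the denominator. Your explicit bounds $1\le\sqrt{1+\lambda_1^2u_1^2+\lambda_2^2u_2^2}\le\sqrt{1+(\lambda_1^2+\lambda_2^2)r_0^2}$ make the paper's ``after shrinking the chart'' step precise, and they show that any bounded chart suffices.
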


\begin{proof}
One has
\[
\partial_{u_1}X=(1,0,\lambda_1u_1),
\qquad
\partial_{u_2}X=(0,1,\lambda_2u_2).
\]
Therefore,
\[
\partial_{u_1}X\times \partial_{u_2}X
=
(-\lambda_1u_1,-\lambda_2u_2,1),
\]
and normalization gives the formula for $n_Q$. If $v_0=e_1$, then
\[
|n_Q(u)\cdot v_0|
=
\frac{|\lambda_1u_1|}
{\sqrt{1+\lambda_1^2u_1^2+\lambda_2^2u_2^2}}.
\]
After shrinking the chart, the denominator is uniformly comparable to $1$. Since $\lambda_1\neq0$, the asserted comparison follows.
\end{proof}

\begin{lem}[extreme mass of a tangent tube]\label{lem:quadratic-extreme-strip-mass}
Let $v_0$ be a tangent direction to $S_Q$ at a point $x_0$ such that $\chi$ is positive in a neighbourhood of $x_0$. For $0<\rho\le \rho_0$, with $\rho_0$ depending only on the chart, on $Q$, and on the neighbourhood where $\chi$ is positive, there exists a tube $T_\rho$ of length comparable to $1$, transverse radius comparable to $\rho$, and direction parallel to $v_0$, such that
\[
\mu_Q(S_Q\cap T_\rho\cap E_{v_0}^{\mathrm{ext}})
\gtrsim
\rho^{3/2}.
\]
The implicit constant depends only on $Q$, on $\chi$ in the neighbourhood of $x_0$, and on the geometric constants in the definition of tube, but not on $\rho$.
\end{lem}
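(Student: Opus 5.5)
We reduce to an explicit local computation in the chart $X$. We first normalize: after translating in the parameter domain and rotating the $(u_1,u_2)$-plane, we may assume $x_0=X(0)$ is the origin of the model. This is legitimate because completing squares writes $Q$ near any base point as a quadratic of the same type plus an affine function, and subtracting the tangent plane is a rigid motion up to bounded distortion of $\mathcal H^2$. A tangent direction at $x_0$ then lies in $\operatorname{span}(e_1,e_2)$. For the diagonal form of $Q$ we treat $v_0=e_1$; a general tangent direction $v_0=\cos\alpha\,e_1+\sin\alpha\,e_2$ is handled identically, using $|n_Q(u)\cdot v_0|\sim|\lambda_1u_1\cos\alpha+\lambda_2u_2\sin\alpha|$, which is a nondegenerate linear form in $u$. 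Choose $\rho_0$ and a neighbourhood $V=X(B(0,r_0))$ on which $\chi\ge c_\chi>0$, and on which the denominator in Lemma~\ref{lem:quadratic-normal-linearization} is comparable to $1$.

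Next we take the tube $T_\rho$ with axis the line $\{t e_1: t\in\mathbb R\}$ through $x_0$, longitudinal range $|t|\le r_0/2$, and transverse radius $C\rho$. We then identify a parameter region $D_\rho\subset B(0,r_0)$ whose image lies in $T_\rho\cap E^{\mathrm{ext}}_{v_0}$. A point $X(u)$ has distance to the axis equal to $(u_2^2+Q(u)^2)^{1/2}$. Hence if $|u_2|\le \rho/2$ and $|u_1|\le c\rho^{1/2}$ with $c$ small depending on $\lambda_1,\lambda_2$, then $|Q(u)|\lesssim c^2\rho+\rho^2\le \rho$ and $X(u)\in T_\rho$. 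Moreover, Lemma~\ref{lem:quadratic-normal-linearization} gives $|n_Q(u)\cdot v_0|\lesssim |u_1|\le c\rho^{1/2}\le\rho^{1/2}$ for $c$ small, so $X(u)\in E^{\mathrm{ext}}_{v_0}$. The longitudinal coordinate of $X(u)$ is $u_1$, which is well within range. So we set $D_\rho=\{|u_1|\le c\rho^{1/2},\,|u_2|\le\rho/2\}$, which sits inside $B(0,r_0)$ once $\rho\le\rho_0$.

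Finally, the area formula gives
\[
\mu_Q(S_Q\cap T_\rho\cap E^{\mathrm{ext}}_{v_0})\ge c_\chi\int_{D_\rho}|\partial_{u_1}X\times\partial_{u_2}X|\,\mathrm du\ge c_\chi|D_\rho|\sim \rho^{1/2}\cdot\rho=\rho^{3/2}.
\]
Here the Jacobian $\sqrt{1+\lambda_1^2u_1^2+\lambda_2^2u_2^2}$ is at least $1$. The constants depend only on $\lambda_i$, $c_\chi$, $r_0$ and the tube convention. The main obstacle is the reduction to $x_0$ being the vertex, combined with a general tangent direction. We would handle this by working directly at a general point $u^0$: there the distance to the tangent line is controlled by the second-order Taylor remainder $\tfrac12 D^2Q(s,\cdot)$ along the tangent direction, the normal component is linear in the displacement, and the same box of dimensions $\rho^{1/2}\times\rho$ (aligned with $v_0$ and its in-plane orthogonal) works verbatim. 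Only the boundedness of $D^2Q$ and $\lambda_1\lambda_2\ne0$ are used for the upper containment; no lower curvature bound is needed.
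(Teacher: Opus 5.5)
Your proposal is correct and follows the paper's proof essentially verbatim: normalize to $x_0=0$, $v_0=e_1$, take the tube along the $e_1$-axis, show that the parameter box $\{|u_1|\le c\rho^{1/2},\ |u_2|\lesssim\rho\}$ maps into $S_Q\cap T_\rho\cap E_{v_0}^{\mathrm{ext}}$, and conclude with the Jacobian and the lower bound on $\chi$. Your direct argument at a general base point $u^0$ clarifies a reduction the paper only asserts: it uses a box aligned with $v_0$ and needs only $|D^2Q|\lesssim 1$ and the Lipschitz bound $|n_Q(u)\cdot v_0|\lesssim|u-u^0|$. It is also cleaner than your shear remark, since subtracting the tangent plane is not a rigid motion.
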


\begin{proof}
After a translation and a rigid rotation in the tangent plane, one may assume that $x_0=0$ and $v_0=e_1$. In local coordinates,
\[
X(u_1,u_2)=(u_1,u_2,Q(u_1,u_2)).
\]
By Lemma~\ref{lem:quadratic-normal-linearization}, if $|u_1|\le c\rho^{1/2}$ and $c>0$ is sufficiently small, then
\[
|n_Q(u)\cdot v_0|\le \rho^{1/2}.
\]

Let $T_\rho$ be a tube with direction $e_1$, transverse radius comparable to $\rho$, whose axis passes through the origin. For $|u|$ sufficiently small, the condition $X(u)\in T_\rho$ is guaranteed by
\[
|u_2|\le c\rho,
\qquad
|Q(u_1,u_2)|\le c\rho.
\]
Since $Q(u)=O(|u|^2)$, the restrictions
\[
|u_1|\le c\rho^{1/2},
\qquad
|u_2|\le c\rho
\]
imply $X(u)\in T_\rho$, after reducing $c$ if necessary.

Therefore,
\[
X\bigl(\{|u_1|\le c\rho^{1/2},\ |u_2|\le c\rho\}\bigr)
\subset
S_Q\cap T_\rho\cap E_{v_0}^{\mathrm{ext}}.
\]
The surface Jacobian of $X$ is comparable to $1$ in the chart, and $\chi$ is bounded below by a positive constant in the neighbourhood under consideration. We conclude that
\[
\mu_Q(S_Q\cap T_\rho\cap E_{v_0}^{\mathrm{ext}})
\gtrsim
\rho^{1/2}\rho
=
\rho^{3/2}.
\]
\end{proof}

\begin{lem}[upper bound for extreme mass in a tangent tube]\label{lem:quadratic-extreme-strip-mass-upper}
Let
\[
S_Q=\{(u_1,u_2,Q(u_1,u_2)):u\in U\},
\qquad
Q(u_1,u_2)=\frac12(\lambda_1u_1^2+\lambda_2u_2^2),
\]
with $\lambda_1\lambda_2>0$. Let
\[
\mu_Q=\chi\,\mathcal H^2\lfloor S_Q,
\]
where $\chi\in L^\infty(S_Q)$ is compactly supported in a sufficiently small chart. Suppose, after a translation and a rigid rotation in the tangent plane, that the base point is the origin and that the tangent direction under consideration is $v_0=e_1$.

Then there exist $\rho_0\in(0,1]$ and $C\ge1$, depending only on $Q$, on the chart, on $\|\chi\|_{L^\infty}$, and on the geometric constants in the definition of tube, such that for every $0<\rho\le\rho_0$ and every tube $T_\rho$ of length comparable to $1$, transverse radius comparable to $\rho$, and direction parallel to $v_0$, one has
\[
\mu_Q(S_Q\cap T_\rho\cap E_{v_0}^{\mathrm{ext}})
\le
C\rho^{3/2}.
\]
\end{lem}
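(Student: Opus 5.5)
Work in the chart $u\mapsto X(u)=(u_1,u_2,Q(u))$. The surface Jacobian of $X$ is comparable to $1$ and $\chi\le\|\chi\|_{L^\infty}$, so it suffices to bound the planar Lebesgue measure of
\[
A:=\{u\in U_0:\ X(u)\in T_\rho,\ |n_Q(u)\cdot e_1|\le\rho^{1/2}\}
\]
by $C\rho^{3/2}$. The strategy is to confine $A$ in $u_1$ by the extreme-strip condition and in $u_2$ by the tube condition, and then multiply the two widths.

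\textbf{Step 1 (the $u_1$ constraint).} By Lemma~\ref{lem:quadratic-normal-linearization}, on the (small) chart we have $|n_Q(u)\cdot e_1|\sim|u_1|$. The upper comparison $|n_Q\cdot e_1|\ge c|\lambda_1||u_1|$ only needs the denominator bounded above, which holds on any bounded chart. Hence $u\in A$ forces $|u_1|\le C\rho^{1/2}$.

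\textbf{Step 2 (the $u_2$ constraint).} The tube $T_\rho$ is parallel to $e_1$. So there is a fixed point $(a,b)\in e_1^\perp$ such that $X(u)\in T_\rho$ implies
\[
|u_2-a|\le C\rho,\qquad |Q(u)-b|\le C\rho.
\]
I would use only the first of these conditions. It confines $u_2$ to an interval of length $O(\rho)$.

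\textbf{Step 3 (conclusion).} Combining the two steps gives $A\subset[-C\rho^{1/2},C\rho^{1/2}]\times[a-C\rho,a+C\rho]$, so $|A|\le C\rho^{3/2}$. This yields $\mu_Q(S_Q\cap T_\rho\cap E^{\mathrm{ext}}_{e_1})\le C\|\chi\|_{L^\infty}\rho^{3/2}$, uniformly over all tubes parallel to $e_1$. The normalization $\rho\le\rho_0$ only ensures the chart is small enough for Lemma~\ref{lem:quadratic-normal-linearization}. Ellipticity is not needed for this particular bound, since the vertical condition $|Q-b|\le C\rho$ is discarded. It would, however, matter for the tail bounds used later.

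\textbf{Main obstacle.} The only delicate point is justifying that the projection of $T_\rho$ onto $e_1^\perp$ lies in a ball of radius $C\rho$. This is Lemma~\ref{lem:tube-projection}. The transverse coordinates of $X(u)$ relative to $e_1$ are exactly $(u_2,Q(u))$, so the bound on $u_2$ follows directly. If a general chart orientation is used instead of the normalized one, I would first apply the rigid rotation assumed in the statement, which leaves $\mathcal H^2$ and the form of $Q$ unchanged.
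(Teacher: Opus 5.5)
Your proof is correct and follows the paper's argument essentially line by line: Lemma~\ref{lem:quadratic-normal-linearization} confines $u_1$ to width $O(\rho^{1/2})$, the tube condition confines $u_2$ to width $O(\rho)$, and the bounds on the surface Jacobian and on $\|\chi\|_{L^\infty}$ finish the estimate. One correction to your closing aside: a rotation in the $(u_1,u_2)$-plane does not keep $Q$ diagonal unless $\lambda_1=\lambda_2$. In a rotated frame, $n_Q\cdot e_1$ also picks up a $u_2$ term, and confining $u_1$ then requires $\mathrm{II}(e_1,e_1)\neq0$, which is exactly where ellipticity enters, so your remark that ellipticity is not needed holds only in the literal diagonal normalization of the statement.
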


\begin{proof}
In the local chart we write
\[
X(u_1,u_2)=(u_1,u_2,Q(u_1,u_2)).
\]
By Lemma~\ref{lem:quadratic-normal-linearization}, after shrinking the chart if necessary,
\[
|n_Q(u)\cdot v_0|\sim |u_1|.
\]
Therefore there exists a constant $C_1\ge1$ such that
\[
X(u)\in E_{v_0}^{\mathrm{ext}}
\quad\Longrightarrow\quad
|u_1|\le C_1\rho^{1/2}.
\]

Let $T_\rho$ be a tube with direction $e_1$. Its axis can be written as
\[
\ell=\{(t,a,b):t\in\mathbb{R}\}
\]
for some $a,b\in\mathbb{R}$. If $X(u_1,u_2)\in T_\rho$, then the distance from $(u_2,Q(u_1,u_2))$ to the point $(a,b)$ in the transverse plane $(x_2,x_3)$ is $\lesssim\rho$. In particular,
\[
|u_2-a|\lesssim \rho.
\]
Thus, for each fixed $u_1$, the set of values $u_2$ such that $X(u_1,u_2)\in T_\rho$ has length $\lesssim\rho$.

Combining this restriction with membership in the extreme strip, the parameter set
\[
\{u\in U:X(u)\in T_\rho\cap E_{v_0}^{\mathrm{ext}}\}
\]
is contained in a set of area $\lesssim \rho^{1/2}\rho=\rho^{3/2}$ in the parameter plane. The surface Jacobian of $X$ is uniformly bounded in the chart, and $\chi$ is bounded above by $\|\chi\|_{L^\infty}$. Therefore,
\[
\mu_Q(S_Q\cap T_\rho\cap E_{v_0}^{\mathrm{ext}})
\le
C\rho^{3/2}.
\]
\end{proof}

\begin{remark}[scope of the extreme tubular bound]\label{rem:quadratic-extreme-strip-mass-upper-scope}
Lemma~\ref{lem:quadratic-extreme-strip-mass-upper} is an individual geometric bound for tangent tubes in the quadratic chart. In particular, it identifies the maximal scale
\[
\mu_Q(S_Q\cap T_\rho\cap E_{v_0}^{\mathrm{ext}})
\lesssim \rho^{3/2}.
\]
This individual estimate does not imply a quadratic bound for a full family of wave packets. To pass from the individual tubular bound to an estimate of the form
\[
\int_{E_{v_0}^{\mathrm{ext}}}
\left|
\sum_T c_T\phi_T(x)
\right|^2\,\mathrm{d}\mu_Q(x)
\lesssim
\rho^{-1/2}\sum_T |c_T|^2
\]
one additionally needs an almost orthogonality, Schur, or overlap mechanism for the cross terms of the tubular family.
\end{remark}

\begin{remark}[local extreme scale]\label{rem:quadratic-extreme-local-scale}
Lemmas~\ref{lem:quadratic-extreme-strip-mass} and~\ref{lem:quadratic-extreme-strip-mass-upper} identify the extreme geometric scale in the tangent model: the maximal mass of the extreme strip inside a tangent tube is comparable to $\rho^{3/2}$. If a normalized wave packet is concentrated in $T_\rho$ and has size
\[
|\phi_T|\sim \rho^{-1}
\]
in the tube, then its effective quadratic size is
\[
|\phi_T|^2\sim \rho^{-2}.
\]
The extreme mass $\rho^{3/2}$ produces the quadratic cost
\[
\rho^{-2}\rho^{3/2}=\rho^{-1/2},
\]
and therefore the norm loss
\[
\rho^{-1/4}=R^{1/8}.
\]
This gives the residual scale of the quadratic model with positive surface density. The tangent packet test below shows that this scale is optimal.
\end{remark}

\subsection{Uniform elliptic form of the extreme strip}\label{subsec:elliptic-uniform-extreme-geometry}

In this subsection we fix a compact chart $U_0\Subset U$ such that
\[
\operatorname{supp}\chi\subset S_{Q,0}:=X(U_0),
\qquad
X(u_1,u_2)=(u_1,u_2,Q(u_1,u_2)).
\]
All implicit constants may depend on $Q$, on $U_0$, on $\|\chi\|_{L^\infty}$, and on the geometric constants of the tubes, but not on $\rho$, on $v$, or on the tube under consideration.

\begin{defn}[relevant directions]\label{def:quadratic-relevant-directions}
We say that $v\in S^2$ is a relevant direction for the patch if
\[
Z_v
:=
\{u\in U_0: n_Q(X(u))\cdot v=0\}
\]
is nonempty. For such $v$, the extreme strip is defined by
\[
E_v^{\mathrm{ext}}
:=
\{X(u):u\in U_0,\ |n_Q(X(u))\cdot v|\le \rho^{1/2}\}.
\]
\end{defn}

\begin{lem}[uniform transversality of the tangency function]\label{lem:quadratic-relevant-direction-transversality}
There exists a constant $c_Q>0$, depending only on $Q$ and on $U_0$, such that for every relevant direction $v\in S^2$ and every $u\in Z_v$,
\[
\bigl|\nabla_u(n_Q(X(u))\cdot v)\bigr|\ge c_Q.
\]
\end{lem}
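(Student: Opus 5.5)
The plan is to compute the gradient of the tangency function explicitly, using the formula for $n_Q$ from Lemma~\ref{lem:quadratic-normal-linearization}, and to exploit the fact that at a zero of the tangency function the normalization factor contributes nothing to the gradient. Write
\[
N(u):=(-\lambda_1u_1,-\lambda_2u_2,1),
\qquad
W(u):=\sqrt{1+\lambda_1^2u_1^2+\lambda_2^2u_2^2},
\]
so that $g_v(u):=n_Q(X(u))\cdot v=(N(u)\cdot v)/W(u)$. By the product rule,
\[
\nabla_u g_v=\frac{\nabla_u(N\cdot v)}{W}+(N\cdot v)\,\nabla_u\Bigl(\frac1W\Bigr).
\]
For $u\in Z_v$ the second term vanishes, because $N(u)\cdot v=0$ there. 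Since $N\cdot v=-\lambda_1u_1v_1-\lambda_2u_2v_2+v_3$ is affine in $u$, we have $\nabla_u(N\cdot v)=(-\lambda_1v_1,-\lambda_2v_2)$. Hence on $Z_v$
\[
|\nabla_u g_v(u)|=\frac{|(\lambda_1v_1,\lambda_2v_2)|}{W(u)}
\ge\frac{\min\{|\lambda_1|,|\lambda_2|\}\,|(v_1,v_2)|}{W(u)}.
\]

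Two bounds remain. First, $W$ is bounded above on $U_0$: since $U_0$ is relatively compact, $\sup_{U_0}|u|=:D<\infty$, and $W\le\sqrt{1+\Lambda^2D^2}$ with $\Lambda:=\max\{|\lambda_1|,|\lambda_2|\}$.

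Second, we need a lower bound on the horizontal part $|(v_1,v_2)|$ of $v$. This is the only step needing an idea, and it again uses the tangency condition. From $N(u)\cdot v=0$ we get
\[
v_3=\lambda_1u_1v_1+\lambda_2u_2v_2,
\qquad\text{so}\qquad
|v_3|\le\Lambda D\,|(v_1,v_2)|.
\]
Combined with $|v|=1$, this gives $1\le(1+\Lambda^2D^2)|(v_1,v_2)|^2$. Geometrically, a direction tangent to the patch somewhere in $U_0$ cannot be close to vertical.

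Assembling these bounds yields
\[
|\nabla_u g_v(u)|\ge c_Q:=\frac{\min\{|\lambda_1|,|\lambda_2|\}}{1+\Lambda^2D^2}>0,
\]
uniformly in the relevant direction $v$ and in $u\in Z_v$. This constant depends only on $Q$ and $U_0$, and it degenerates exactly as $\min\{|\lambda_1|,|\lambda_2|\}\downarrow0$, consistent with the non-uniformity caveat in Subsection~\ref{subsec:modelo-cuadratico-medida}. The argument uses only $\lambda_1\lambda_2\neq0$; ellipticity plays no role in this step.
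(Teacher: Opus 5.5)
Your proof is correct and is essentially the paper's argument written in coordinates. The paper computes $\mathrm{d}h_v(u)[b]=-\mathrm{II}_u(b,a)$ with $\mathrm{d}X_u a=v$; here $a=(v_1,v_2)$, so this functional has norm exactly $|(\lambda_1v_1,\lambda_2v_2)|/W(u)$, and your tangency-based lower bound on $|(v_1,v_2)|$ is the paper's bound $|a|\ge c_0$, which it gets from the uniform invertibility of $\mathrm{d}X_u$ on $U_0$. Your version also gives the explicit constant $c_Q=\min\{|\lambda_1|,|\lambda_2|\}/(1+\Lambda^2D^2)$ and makes clear that only $\lambda_1\lambda_2\neq0$ is used.
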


\begin{proof}
Set
\[
h_v(u):=n_Q(X(u))\cdot v.
\]
If $u\in Z_v$, then $v\in T_{X(u)}S_Q$. Let $a\in T_uU$ be the unique vector such that
\[
\mathrm{d}X_u a=v.
\]
Since $\mathrm{d}X_u:T_uU\to T_{X(u)}S_Q$ is an isomorphism and $U_0\Subset U$, there exist constants $0<c_0\le C_0<\infty$, depending only on $Q$ and on $U_0$, such that
\[
c_0\le |a|\le C_0.
\]

For $b\in T_uU$, up to the sign determined by the chosen orientation of the normal, one has
\[
\mathrm{d} h_v(u)[b]
=
\mathrm{d}(n_Q\circ X)_u[b]\cdot \mathrm{d}X_u a
=
-\mathrm{II}_u(b,a).
\]
Therefore, the norm of $\nabla h_v(u)$ is uniformly comparable to the norm of the functional
\[
b\mapsto \mathrm{II}_u(b,a).
\]
On the quadratic graph,
\[
\mathrm{II}_u(b,a)
=
\frac{\lambda_1 b_1a_1+\lambda_2 b_2a_2}
{\sqrt{1+\lambda_1^2u_1^2+\lambda_2^2u_2^2}},
\]
up to the same orientation sign. Since $\lambda_1\lambda_2>0$, the principal coefficients have modulus uniformly separated from zero and the denominator is uniformly bounded above and below on $U_0$. Consequently,
\[
\sup_{|b|=1}|\mathrm{II}_u(b,a)|\ge c|a|\ge cc_0,
\]
with $c>0$ depending only on $Q$ and on $U_0$. This gives the asserted lower bound.
\end{proof}

\begin{lem}[uniform normal form near $E_v^{\mathrm{ext}}$]\label{lem:quadratic-uniform-normal-form}
There exist constants $\rho_0\in(0,1]$, $C\ge1$, and a finite family of coordinate charts, depending only on $Q$ and on $U_0$, with the following property.

For every $0<\rho\le\rho_0$, every relevant direction $v$, and every point of $E_v^{\mathrm{ext}}$, there is one of these local charts $(s,t)$ in the parameter domain such that, in that chart,
\[
s=h_v(u),
\qquad
h_v(u):=n_Q(X(u))\cdot v,
\]
and
\[
E_v^{\mathrm{ext}}\subset \{|s|\le C\rho^{1/2}\}.
\]
The Jacobians of these charts and of their inverses are uniformly bounded by constants depending only on $Q$ and on $U_0$.
\end{lem}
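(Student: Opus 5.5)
The idea is to build the charts from the implicit function theorem applied to $h_v$ near its zero set $Z_v$, using the uniform gradient bound of Lemma~\ref{lem:quadratic-relevant-direction-transversality}. Then we show by compactness that one finite family of charts, with uniform Jacobian bounds, serves all relevant directions and all small $\rho$.

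\textbf{Step 1: $h_v$ is uniformly smooth in $(u,v)$.} Consider
\[
H(u,v):=n_Q(X(u))\cdot v
\]
on the compact set $\overline{U_0'}\times S^2$, where $U_0\Subset U_0'\Subset U$. It is real-analytic, and all its $u$-derivatives up to order two are bounded by constants depending only on $Q$ and $U_0'$.

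\textbf{Step 2: the gradient bound persists off $Z_v$.} By Lemma~\ref{lem:quadratic-relevant-direction-transversality}, $|\nabla_u H|\ge c_Q$ on $\{H=0\}$. Continuity and compactness extend this to
\[
|\nabla_u H(u,v)|\ge c_Q/2
\qquad\text{whenever } |H(u,v)|\le \delta_0,
\]
for some $\delta_0>0$ depending only on $Q$ and $U_0$. We argue by contradiction: a sequence $(u_k,v_k)$ with $H(u_k,v_k)\to0$ and $|\nabla_u H(u_k,v_k)|<c_Q/2$ has a limit point on the zero set where the gradient bound fails. One subtlety is that the lemma is stated only for $u\in U_0$. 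I would rerun its proof, which uses only $\lambda_1\lambda_2>0$ and bounds on $\mathrm dX$, on the slightly larger $U_0'$. Choosing $\rho_0$ with $\rho_0^{1/2}\le\delta_0$ ensures that every point of $E_v^{\mathrm{ext}}$ satisfies $|H|\le\rho^{1/2}\le\delta_0$.

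\textbf{Step 3: build the charts at a point.} Fix $(u^*,v^*)$ with $|H(u^*,v^*)|\le\delta_0$. Let $w=\nabla_uH(u^*,v^*)^{\perp}$ be the unit rotated gradient, and set
\[
\Phi_v(u)=\bigl(H(u,v),\,u\cdot w\bigr).
\]
Its Jacobian determinant at $(u^*,v^*)$ equals $\pm|\nabla_uH|\ge c_Q/2$. The quantitative inverse function theorem, whose constants depend only on the $C^2$ bounds from Step 1 and the lower bound $c_Q/2$, gives the following. There is a radius $r>0$, independent of $(u^*,v^*)$, such that for all $v$ with $|v-v^*|<r$ the map $\Phi_v$ is a diffeomorphism of $B(u^*,r)$ onto its image, with $\mathrm D\Phi_v$ and its inverse uniformly bounded. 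In that chart, $s=h_v(u)$ holds by construction. The inclusion $E_v^{\mathrm{ext}}\cap X(B(u^*,r))\subset\{|s|\le\rho^{1/2}\}$ is then just the definition, so $C=1$ suffices.

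\textbf{Step 4: extract a finite family.} Cover the compact set
\[
\{(u,v)\in\overline{U_0}\times S^2:\ |H(u,v)|\le\delta_0\}
\]
by finitely many product neighbourhoods $B(u^*_i,r/2)\times B(v^*_i,r)$. This yields finitely many frames $w_i$ and centres $u^*_i$. For each relevant $v$ and each point of $E_v^{\mathrm{ext}}$, some $i$ applies, and the chart is $\Phi_v$ with frame $w_i$. The phrase ``finite family of coordinate charts'' should be read in this sense: the second coordinate comes from a finite list, while the first coordinate $s=h_v$ necessarily depends on $v$.

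\textbf{Main obstacle.} The main difficulty is uniformity in $v$. The inverse function theorem radius must not degenerate as $v$ varies, which is exactly what the uniform $C^2$ bounds together with the lower gradient bound of Step 2 provide. The secondary point is extending the transversality estimate slightly off $Z_v$, and beyond $U_0$, handled as described in Step 2.
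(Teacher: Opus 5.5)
Your proposal is correct and follows essentially the paper's route: the uniform gradient bound of Lemma~\ref{lem:quadratic-relevant-direction-transversality}, the implicit/inverse function theorem with constants uniform in $(u,v)$, and a finite compactness cover. The paper covers the zero set $\mathcal K_0$ and then shrinks $\rho_0$ so that $\{|h_v|\le\rho^{1/2}\}$ lies inside the charts, whereas you first extend the gradient bound to $\{|H|\le\delta_0\}$ and cover that set directly; your extra care about working on $\overline{U_0}$, your observation that $C=1$ suffices, and your reading of the ``finite family'' as finitely many frames with $s=h_v$ depending on $v$ are refinements, not departures.
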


\begin{proof}
The family of pairs
\[
\mathcal K_0
:=
\{(u,v)\in U_0\times S^2:h_v(u)=0\}
\]
is compact. By Lemma~\ref{lem:quadratic-relevant-direction-transversality}, $|\nabla h_v(u)|\ge c_Q$ on $\mathcal K_0$. The implicit function theorem with uniform constants, applied on a finite cover of $\mathcal K_0$, gives local charts $(s,t)$ with $s=h_v(u)$ and uniformly controlled Jacobians.

After reducing $\rho_0$ if necessary, every point with $|h_v(u)|\le \rho^{1/2}$ lies in the charts of this cover. In those charts,
\[
X(u)\in E_v^{\mathrm{ext}}
\quad\Longrightarrow\quad
|h_v(u)|\le \rho^{1/2},
\]
and, since $s=h_v(u)$ up to the uniform normalization of the chart, one obtains
\[
|s|\le C\rho^{1/2}.
\]
\end{proof}

\begin{lem}[projective nondegeneracy of the elliptic tangency curve]\label{lem:elliptic-tangency-curve-projection}
Let
\[
S_Q=\{(u_1,u_2,Q(u_1,u_2)):u\in U\},
\qquad
Q(u_1,u_2)=\frac12(\lambda_1u_1^2+\lambda_2u_2^2),
\qquad
\lambda_1\lambda_2>0.
\]
Let $U_0\Subset U$. For $v\in S^2$, define
\[
h_v(u):=n_Q(X(u))\cdot v,
\qquad
Z_v:=\{u\in U_0:h_v(u)=0\}.
\]
Then there exists $c>0$, depending only on $Q$ and on $U_0$, with the following property. For every relevant direction $v$, every regular arc of $Z_v$, and every arc-length parametrization $\gamma:I\to Z_v$,
\[
\left|
\frac{\mathrm{d}}{\mathrm{d}t}
\pi_v(X(\gamma(t)))
\right|
\ge c
\qquad
\text{for every }t\in I.
\]
\end{lem}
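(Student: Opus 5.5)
The plan is a direct differential-geometric computation at points of the tangency curve. It reduces the lower bound to the definiteness of the second fundamental form, which is exactly where $\lambda_1\lambda_2>0$ enters.

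Fix a relevant direction $v$, a regular arc of $Z_v$, and an arc-length parametrization $\gamma$. Set $w(t):=\mathrm dX_{\gamma(t)}\gamma'(t)$, which lies in $T_{X(\gamma(t))}S_Q$. Then
\[
\frac{\mathrm d}{\mathrm dt}\pi_v(X(\gamma(t)))=\pi_v w(t),
\qquad
|\pi_v w|^2=|w|^2-(w\cdot v)^2 .
\]
So it suffices to show two things uniformly: $|w|$ is bounded below, and $w$ makes an angle with $v$ bounded away from $0$ and $\pi$. The first is immediate, since $|\gamma'|=1$ and $\mathrm dX_u$ is uniformly invertible on $U_0\Subset U$.

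For the angle, use the structure from the proof of Lemma~\ref{lem:quadratic-relevant-direction-transversality}. At $u=\gamma(t)\in Z_v$ the vector $v$ is tangent, so $v=\mathrm dX_u a$ for a unique $a$ with $c_0\le|a|\le C_0$. Moreover $\mathrm dh_v(u)[b]=\mp\mathrm{II}_u(b,a)$. Since $\gamma$ runs inside $Z_v=\{h_v=0\}$, differentiating gives $\mathrm dh_v(u)[\gamma']=0$, that is,
\[
\mathrm{II}_u(\gamma',a)=0 .
\]
Thus $\gamma'$ is $\mathrm{II}$-orthogonal to $a$.

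Now use ellipticity. In the explicit formula $\mathrm{II}_u(b,a)=(\lambda_1b_1a_1+\lambda_2b_2a_2)/\sqrt{1+\lambda_1^2u_1^2+\lambda_2^2u_2^2}$ the form is definite, because $\lambda_1,\lambda_2$ have the same sign. Hence $|\mathrm{II}_u(a,a)|\ge c|a|^2$ and $|\mathrm{II}_u(b,a)|\le C|b||a|$, uniformly on $U_0$.

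Decompose $\gamma'=\alpha\,a/|a|+e$ with $e\perp a$ (Euclidean in the parameter plane). The relation $\mathrm{II}(\gamma',a)=0$ then gives
\[
|\alpha|\,c|a|\le |\mathrm{II}(e,a)|\le C|e||a|,
\]
so $|\alpha|\le C'|e|$. Combined with $\alpha^2+|e|^2=1$, this yields $|e|\ge c_1>0$. Hence $\gamma'$ and $a$ are uniformly non-parallel in the parameter plane.

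Transport this to $\mathbb R^3$ by $\mathrm dX_u$, which has uniformly bounded norm and inverse norm on $U_0$. Write $w=\alpha\,v/|a|+\mathrm dX_u e$. Then
\[
\pi_v w=\pi_v(\mathrm dX_u e),\qquad |\pi_v(\mathrm dX_u e)|=\operatorname{dist}\bigl(\mathrm dX_u e,\ \mathbb R v\bigr).
\]
Since $\mathbb R v=\mathrm dX_u(\mathbb R a)$ and $e\perp a$, we have $\operatorname{dist}(\mathrm dX_u e,\mathbb R v)\ge\|(\mathrm dX_u)^{-1}\|^{-1}\operatorname{dist}(e,\mathbb R a)=\|(\mathrm dX_u)^{-1}\|^{-1}|e|$. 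This gives $|\pi_v w|\ge c$ with $c$ depending only on $Q$ and $U_0$.

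The main point to handle carefully is this uniformity step: the constants in $|\mathrm{II}(a,a)|\gtrsim|a|^2$, in $c_0\le|a|\le C_0$, and in the bounds for $\mathrm dX_u$ must not depend on $v$ or on the arc. They come from the compactness of $\overline{U_0}$, and they fail precisely in the hyperbolic case, where $a$ could be an asymptotic direction with $\mathrm{II}(a,a)=0$.
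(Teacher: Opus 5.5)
Your proof is correct and uses the same mechanism as the paper: tangency gives $v=\mathrm dX_u a$, differentiating $h_v\circ\gamma=0$ gives $\mathrm{II}_u(\gamma',a)=0$, and $\pi_v(\mathrm dX_u\gamma')$ can vanish only if $\gamma'\parallel a$, which definiteness of $\mathrm{II}$ rules out. The only difference is how uniformity is obtained: the paper argues by contradiction and gets the constant by compactness of $\mathcal K$, whereas your splitting $\gamma'=\alpha a/|a|+e$ makes it explicit, and since the normalizing denominators of $\mathrm{II}$ cancel and $|\mathrm dX_u b|\ge|b|$ on a graph, your constant can in fact be taken as $(1+\kappa^2)^{-1/2}$ with $\kappa=\max|\lambda_i|/\min|\lambda_i|$, independent of $U_0$.
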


\begin{proof}
Let $u=\gamma(t)$ and write $w:=\gamma'(t)$. Since $\gamma$ is parametrized by arc length in the domain, $|w|=1$. Moreover, $h_v(\gamma(t))=0$, and hence
\[
\mathrm{d} h_v(u)[w]=0.
\]
Since $h_v(u)=0$, one has $v\in T_{X(u)}S_Q$. Let $a\in T_uU$ be the unique vector such that
\[
\mathrm{d}X_u a=v.
\]

If
\[
\pi_v(\mathrm{d}X_u w)=0,
\]
then $\mathrm{d}X_u w$ is parallel to $v$. Therefore there exists $\alpha\in\mathbb{R}$ such that
\[
\mathrm{d}X_u w=\alpha v=\alpha\,\mathrm{d}X_u a.
\]
The injectivity of $\mathrm{d}X_u$ gives $w=\alpha a$.

On the other hand,
\[
0=\mathrm{d} h_v(u)[w]
=
\mathrm{d}(n_Q\circ X)_u[w]\cdot \mathrm{d}X_u a
=
-\mathrm{II}_u(w,a),
\]
with the sign depending on the orientation of the normal. Since $w=\alpha a$,
\[
0=\alpha\,\mathrm{II}_u(a,a).
\]
If $\alpha=0$, then $w=0$, a contradiction. Hence $\alpha\neq0$ and $\mathrm{II}_u(a,a)=0$.

For the quadratic graph,
\[
\mathrm{II}_u(b,b)
=
\frac{\lambda_1 b_1^2+\lambda_2 b_2^2}
{\sqrt{1+\lambda_1^2u_1^2+\lambda_2^2u_2^2}},
\]
up to the orientation sign. Since $\lambda_1\lambda_2>0$, this quadratic form is definite. Thus $\mathrm{II}_u(a,a)=0$ implies $a=0$, and then $v=\mathrm{d}X_u a=0$, contradicting $v\in S^2$. Therefore
\[
\pi_v(\mathrm{d}X_u w)\neq0
\]
for every admissible triple $(u,v,w)$.

The uniform lower bound follows by compactness of the set
\[
\mathcal K
:=
\{(u,v,w):u\in U_0,\ v\in S^2,\ h_v(u)=0,\ |w|=1,\ \mathrm{d} h_v(u)[w]=0\}.
\]
The function $(u,v,w)\mapsto |\pi_v(\mathrm{d}X_u w)|$ is continuous and does not vanish on $\mathcal K$. This gives the constant $c>0$ and concludes the proof.
\end{proof}

\begin{lem}[elliptic tubular sublevel estimate]\label{lem:elliptic-tubular-sublevel}
Let
\[
S_Q=\{(u_1,u_2,Q(u_1,u_2)):u\in U\},
\qquad
Q(u_1,u_2)=\frac12(\lambda_1u_1^2+\lambda_2u_2^2),
\qquad
\lambda_1\lambda_2>0.
\]
Let
\[
\mu_Q=\chi\,\mathcal H^2\lfloor S_Q,
\]
where $\chi\in L^\infty(S_Q)$ is compactly supported in $S_{Q,0}=X(U_0)$, with $U_0\Subset U$. For $v\in S^2$, define
\[
E_v^{\mathrm{ext}}
:=
\{x\in S_{Q,0}: |n_Q(x)\cdot v|\le \rho^{1/2}\}.
\]
Then there exist constants $\rho_0\in(0,1]$, $\sigma_0\in(0,1]$, and $C\ge1$, depending only on $Q$, on $U_0$, on $\|\chi\|_{L^\infty}$, and on the geometric constants in the definition of tube, such that for every $0<\rho\le\rho_0$, every relevant direction $v$, every tube $T$ of length comparable to $1$, radius comparable to $\rho$, and direction parallel to $v$, and every $\sigma$ with
\[
\rho\le \sigma\le \sigma_0,
\]
one has
\[
\mu_Q\bigl(E_v^{\mathrm{ext}}\cap N_\sigma(T)\bigr)
\le
C\rho^{1/2}\sigma.
\]
\end{lem}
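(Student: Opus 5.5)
The plan is to work in the uniform normal-form charts of Lemma~\ref{lem:quadratic-uniform-normal-form}, where $E_v^{\mathrm{ext}}$ becomes a thin band of width $C\rho^{1/2}$ around the tangency curve $Z_v$. The main step is to show that $N_\sigma(T)$ cuts out only a stretch of length $O(\sigma)$ along that curve.

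First I would reduce to parameter space. Since $\chi\le\|\chi\|_{L^\infty}$ and the area element of $X$ is bounded on $U_0$, it suffices to bound the Lebesgue measure of
\[
A:=\{u\in U_0:\ |h_v(u)|\le\rho^{1/2},\ X(u)\in N_\sigma(T)\}.
\]
The finite family of charts of Lemma~\ref{lem:quadratic-uniform-normal-form} is independent of $\rho$ and $v$. Inside each chart we use coordinates $(s,t)$ with $s=h_v(u)$, and the Jacobians are bounded uniformly. So it is enough to bound $|A\cap\text{chart}|$ in the $(s,t)$ coordinates, where $|s|\le C\rho^{1/2}$. The task then becomes showing that, for each fixed $s$, the set of $t$ with $X(u(s,t))\in N_\sigma(T)$ has length $\lesssim\sigma$. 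Fubini then gives $|A|\lesssim\rho^{1/2}\sigma$.

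Next I would handle the fibres. Since $T$ has direction $v$, Lemma~\ref{lem:tube-projection} gives
\[
X(u)\in N_\sigma(T)\ \Longrightarrow\ |\pi_v X(u)-y_T|\lesssim\sigma,
\]
using $\rho\le\sigma$. Set $g_s(t):=\pi_v X(u(s,t))$. For $s=0$, the curve $t\mapsto u(0,t)$ is a regular parametrization of an arc of $Z_v$ with speed comparable to $1$. Lemma~\ref{lem:elliptic-tangency-curve-projection} then gives $|g_0'(t)|\ge c$. By uniform $C^1$ control of the charts in $(s,t,v)$, for $|s|\le C\rho^{1/2}\le C\rho_0^{1/2}$ we get $|g_s'(t)|\ge c/2$ after shrinking $\rho_0$.

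A curve with speed bounded below can still return to a ball of radius $\sigma$ several times. To rule this out I would also shrink the chart's $t$-interval to a length $\delta$ with the following property: the derivative $g_s'$ varies by less than $c/4$ in angle over that interval. Then $\langle g_s'(t),g_s'(t_0)\rangle\ge c^2/8$, so the component of $g_s$ along $g_s'(t_0)$ is strictly monotone with slope $\ge c'$. Hence the set of $t$ in the chart with $|g_s(t)-y_T|\lesssim\sigma$ lies in an interval of length $\lesssim\sigma/c'$. The number of such chart pieces is finite and uniform by compactness of $\mathcal K_0$, and summing over them gives the claim.

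The restriction $\sigma\le\sigma_0$ is not essential; it is kept only for consistency with the local picture. It also covers the case where $N_\sigma(T)$ could otherwise reach beyond the chart. In that case the crude bound $\mu_Q(E_v^{\mathrm{ext}})\lesssim\rho^{1/2}\lesssim\rho^{1/2}\sigma/\sigma_0$ holds anyway.

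I expect the main obstacle to be uniformity of the fibre bound in $(s,v)$. Lemma~\ref{lem:elliptic-tangency-curve-projection} only gives the lower bound on $Z_v$ itself, that is at $s=0$. Extending it to the whole band $|s|\le C\rho^{1/2}$, and controlling the turning of $g_s$, requires $C^2$ bounds on the implicit-function charts that are uniform over the compact set $\mathcal K_0$. My first approach is a continuity and compactness argument on the map $(u,v)\mapsto \pi_v(\mathrm dX_u\,\partial_t u)$, using real-analyticity of $h_v$ in $(u,v)$.
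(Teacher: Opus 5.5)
Your proposal is correct and follows the paper's proof essentially step by step. Both arguments pass to the normal-form charts $(s,t)$ with $s=h_v(u)$, reduce membership in $N_\sigma(T)$ to $|\pi_v X(s,t)-y_T|\lesssim\sigma$, take the nondegeneracy of $\pi_v\circ X$ along $Z_v$ from Lemma~\ref{lem:elliptic-tangency-curve-projection} and extend it to the band $|s|\le C\rho^{1/2}$ by continuity, use a monotone linear component on finitely many $t$-subintervals to get fibre length $O(\sigma)$, and conclude by Fubini in $s$. Your choice of the direction $g_s'(t_0)$ on short $t$-intervals plays the same role as the paper's fixed unit component $e_{v,\alpha}$ per refined chart, and your remark that this needs uniform $C^2$ control of the implicit-function charts over $\mathcal K_0$ spells out what the paper compresses into ``by continuity''.
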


\begin{proof}
Set
\[
h_v(u):=n_Q(X(u))\cdot v.
\]
By Lemma~\ref{lem:quadratic-uniform-normal-form}, the extreme strip is covered by a finite family of charts $(s,t)$, uniform in $v$, in which
\[
|s|\le C\rho^{1/2}
\]
and the Jacobians are uniformly controlled.

Fix one of these charts. It is enough to prove that, in this chart,
\[
\bigl|\{(s,t): |s|\le C\rho^{1/2},\ X(s,t)\in N_\sigma(T)\}\bigr|
\lesssim
\rho^{1/2}\sigma.
\]
For each fixed $s$, we estimate the length of the corresponding set of parameters $t$.

Let $\ell_T$ be the axis of $T$. Since $T$ has direction parallel to $v$, there exists $y_T\in v^\perp$ such that
\[
X(s,t)\in N_\sigma(T)
\quad\Longrightarrow\quad
|\pi_v(X(s,t))-y_T|\le C_T\sigma,
\]
where $C_T$ depends only on the geometric convention for tubes.

For $s=0$, the curve $t\mapsto X(0,t)$ parametrizes an arc of $Z_v$. By Lemma~\ref{lem:elliptic-tangency-curve-projection}, and after refining the finite family of charts if necessary, there exists a unit linear component $e_{v,\alpha}\in v^\perp$, fixed in each chart of the cover, such that
\[
\left|
\frac{\partial}{\partial t}
\bigl(\pi_v(X(0,t))\cdot e_{v,\alpha}\bigr)
\right|
\ge c_1
\]
in that chart. Here $c_1>0$ depends only on $Q$ and on $U_0$. By continuity, after reducing $\rho_0$ if necessary, the same estimate holds for $|s|\le C\rho^{1/2}$ with $c_1/2$ in place of $c_1$:
\[
\left|
\frac{\partial}{\partial t}
\bigl(\pi_v(X(s,t))\cdot e_{v,\alpha}\bigr)
\right|
\ge \frac{c_1}{2}.
\]
After reducing each coordinate interval in the finite cover, one may also assume that this derivative has constant sign. Hence the scalar function
\[
t\mapsto \pi_v(X(s,t))\cdot e_{v,\alpha}
\]
is quantitatively monotone, uniformly in $s$, $v$, and $\rho$. Consequently,
\[
\bigl|\{t: |\pi_v(X(s,t))-y_T|\le C_T\sigma\}\bigr|
\lesssim \sigma.
\]

Integrating this bound in $s$ over an interval of length $\lesssim\rho^{1/2}$, and summing over the finite family of charts, gives
\[
\mathcal H^2\bigl(E_v^{\mathrm{ext}}\cap N_\sigma(T)\bigr)
\lesssim
\rho^{1/2}\sigma.
\]
Finally,
\[
\mu_Q\bigl(E_v^{\mathrm{ext}}\cap N_\sigma(T)\bigr)
\le
\|\chi\|_{L^\infty}
\mathcal H^2\bigl(E_v^{\mathrm{ext}}\cap N_\sigma(T)\bigr),
\]
which concludes the proof.
\end{proof}

\begin{lem}[extreme mass with tubular tails]\label{lem:quadratic-extreme-tail-mass}
Let $S_Q$, $\mu_Q$, $v_0=e_1$, and the quadratic chart be as in Lemma~\ref{lem:quadratic-extreme-strip-mass-upper}. Let $T_\rho$ be a tube of length comparable to $1$, transverse radius comparable to $\rho$, and direction parallel to $v_0$. Then, for every $N>2$, there exist $\rho_0\in(0,1]$ and $C_N\ge1$, depending only on $N$, on $Q$, on the chart, on $\|\chi\|_{L^\infty}$, and on the geometric constants in the definition of tube, such that for every $0<\rho\le\rho_0$,
\[
\int_{E_{v_0}^{\mathrm{ext}}}
\left(
1+\frac{\operatorname{dist}(x,T_\rho)}{\rho}
\right)^{-N}
\,\mathrm d\mu_Q(x)
\le
C_N\rho^{3/2}.
\]
\end{lem}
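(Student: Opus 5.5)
The plan is a dyadic decomposition of the tail weight around $T_\rho$, combined with a tubular sublevel bound on $E_{v_0}^{\mathrm{ext}}$ at each dyadic scale. For $k\ge0$ set
\[
A_0:=E_{v_0}^{\mathrm{ext}}\cap N_{\rho}(T_\rho),
\qquad
A_k:=E_{v_0}^{\mathrm{ext}}\cap\bigl(N_{2^k\rho}(T_\rho)\setminus N_{2^{k-1}\rho}(T_\rho)\bigr)\quad(k\ge1).
\]
On $A_k$ the weight is bounded by $C 2^{-Nk}$, so
\[
\int_{E_{v_0}^{\mathrm{ext}}}\Bigl(1+\tfrac{\operatorname{dist}(x,T_\rho)}{\rho}\Bigr)^{-N}\mathrm d\mu_Q
\le C\sum_{k\ge0}2^{-Nk}\mu_Q\bigl(E_{v_0}^{\mathrm{ext}}\cap N_{2^k\rho}(T_\rho)\bigr).
\]

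\textbf{Key bound.} The central estimate is
\[
\mu_Q\bigl(E_{v_0}^{\mathrm{ext}}\cap N_{\sigma}(T_\rho)\bigr)\lesssim \rho^{1/2}\sigma
\qquad\text{for }\rho\le\sigma\le\sigma_0 .
\]
It follows directly from the chart argument of Lemma~\ref{lem:quadratic-extreme-strip-mass-upper}:
\begin{itemize}
\item Membership in $E_{v_0}^{\mathrm{ext}}$ forces $|u_1|\le C_1\rho^{1/2}$, by Lemma~\ref{lem:quadratic-normal-linearization}.
\item Membership in $N_\sigma(T_\rho)$ forces $|u_2-a|\lesssim\sigma$, where $(a,b)$ are the transverse coordinates of the axis. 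This holds because the $x_2$-coordinate of $X(u)$ is exactly $u_2$.
\item The parameter set therefore has area $\lesssim\rho^{1/2}\sigma$. Since the surface Jacobian and $\|\chi\|_{L^\infty}$ are bounded, the same bound holds for the measure.
\end{itemize}
Equivalently, one may invoke Lemma~\ref{lem:elliptic-tubular-sublevel} with $v=v_0$, which is a relevant direction since it is tangent at the origin. The direct coordinate argument is simpler, though, and it needs no upper restriction on $\sigma$: the $u_2$-constraint stays meaningful for every $\sigma$, and the $u_1$-interval is fixed.

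\textbf{Summation.} In the range $2^k\rho\le\sigma_0$ (or every $k$, with the coordinate argument) we get
\[
\sum_k 2^{-Nk}\rho^{1/2}2^k\rho=\rho^{3/2}\sum_k 2^{(1-N)k}\lesssim_N\rho^{3/2},
\]
which converges since $N>2>1$. If one insists on using Lemma~\ref{lem:elliptic-tubular-sublevel} with its restriction $\sigma\le\sigma_0$, the remaining range $2^k\rho>\sigma_0$ is handled with the trivial bound
\[
\mu_Q(E_{v_0}^{\mathrm{ext}})\lesssim\rho^{1/2},
\]
which holds because the strip has width $\rho^{1/2}$ in $u_1$. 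Those terms contribute
\[
\rho^{1/2}\sum_{2^k>\sigma_0/\rho}2^{-Nk}\lesssim\rho^{1/2}(\rho/\sigma_0)^N\le C\rho^{3/2},
\]
using $N>1$.

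\textbf{Main obstacle.} The delicate point is justifying that the $u_2$-window has length $\lesssim\sigma$ uniformly for tubes of any transverse offset $(a,b)$. One also needs that tube truncation in length does not matter; it does not, because $N_\sigma(T_\rho)$ is contained in the infinite slab around the axis line, which only enlarges the set being bounded. With that in place, the constants depend only on $N$, $Q$, the chart, $\|\chi\|_{L^\infty}$ and the tube conventions, as claimed.
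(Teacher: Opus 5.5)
Your proposal is correct and is essentially the paper's own proof. Both arguments use the chart constraints $|u_1|\lesssim\rho^{1/2}$ (from the extreme strip) and $|u_2-a|\lesssim 2^k\rho$ (from proximity to the $e_1$-directed axis) to get mass $\lesssim 2^k\rho^{3/2}$ at each dyadic scale, then sum this against $2^{-Nk}$. Your optional detour through Lemma~\ref{lem:elliptic-tubular-sublevel}, with the separate large-$k$ tail, mirrors what the paper does later in Corollary~\ref{cor:quadratic-extreme-schur-parallel}, but it is not needed here.
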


\begin{proof}
In the local chart we write
\[
X(u_1,u_2)=(u_1,u_2,Q(u_1,u_2)).
\]
By Lemma~\ref{lem:quadratic-normal-linearization}, after shrinking the chart if necessary,
\[
X(u)\in E_{v_0}^{\mathrm{ext}}
\quad\Longrightarrow\quad
|u_1|\le C\rho^{1/2}.
\]
Let the axis of $T_\rho$ be of the form
\[
\ell=\{(t,a,b):t\in\mathbb R\}.
\]
If $\operatorname{dist}(X(u),T_\rho)\lesssim 2^k\rho$, then, in particular,
\[
|u_2-a|\lesssim 2^k\rho
\]
as long as the chart remains fixed. Therefore,
\[
\mu_Q\Bigl(
\{x\in E_{v_0}^{\mathrm{ext}}:\operatorname{dist}(x,T_\rho)\lesssim 2^k\rho\}
\Bigr)
\lesssim
\rho^{1/2}\,2^k\rho
=
2^k\rho^{3/2}.
\]
The implicit constant depends only on $Q$, on the chart, on $\|\chi\|_{L^\infty}$, and on the geometric constants of the tubes.

Decomposing into dyadic annuli,
\[
A_0=\{x\in E_{v_0}^{\mathrm{ext}}:\operatorname{dist}(x,T_\rho)\lesssim \rho\},
\]
\[
A_k=\{x\in E_{v_0}^{\mathrm{ext}}:2^{k-1}\rho\lesssim\operatorname{dist}(x,T_\rho)\lesssim 2^k\rho\},
\qquad k\ge1,
\]
one obtains
\[
\int_{E_{v_0}^{\mathrm{ext}}}
\left(
1+\frac{\operatorname{dist}(x,T_\rho)}{\rho}
\right)^{-N}
\,\mathrm d\mu_Q(x)
\lesssim
\sum_{k\ge0}2^{-Nk}\,2^k\rho^{3/2}.
\]
The series converges under the stated assumption $N>2$. This proves the bound.
\end{proof}

\subsection{Local functional consequence}\label{subsec:quadratic-extreme-functional-cost}

\begin{prop}[tangent packet test]\label{prop:quadratic-extreme-wave-packet-cost}
Under the hypotheses of Lemma~\ref{lem:quadratic-extreme-strip-mass}, suppose that there exists a test wave packet $\phi_T$ associated with the tangent tube $T_\rho$ such that
\[
\|\phi_T\|_{L^2(\mathbb{R}^3)}\sim1
\]
and
\[
|\phi_T(x)|\gtrsim \rho^{-1}
\]
on $S_Q\cap T_\rho\cap E_{v_0}^{\mathrm{ext}}$. Then
\[
\|\phi_T\|_{L^2(\mathrm{d}\mu_Q)}
\gtrsim
\rho^{-1/4}
\|\phi_T\|_{L^2(\mathbb{R}^3)}
=
R^{1/8}
\|\phi_T\|_{L^2(\mathbb{R}^3)}.
\]
\end{prop}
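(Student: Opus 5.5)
The plan is to bound the $L^2(\mathrm d\mu_Q)$ norm from below by restricting the integral to the extreme tangential set, where both the packet and the measure are controlled from below. Since $\mu_Q$ is a positive measure and $|\phi_T|^2\ge0$, we have
\[
\|\phi_T\|_{L^2(\mathrm d\mu_Q)}^2
=
\int_{\mathbb R^3}|\phi_T|^2\,\mathrm d\mu_Q
\ge
\int_{S_Q\cap T_\rho\cap E_{v_0}^{\mathrm{ext}}}|\phi_T(x)|^2\,\mathrm d\mu_Q(x).
\]
On this set the hypothesis $|\phi_T|\gtrsim\rho^{-1}$ applies, so the right-hand side is at least $c\,\rho^{-2}\,\mu_Q(S_Q\cap T_\rho\cap E_{v_0}^{\mathrm{ext}})$.

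Next I would invoke Lemma~\ref{lem:quadratic-extreme-strip-mass}, which holds for $0<\rho\le\rho_0$ under exactly the hypotheses being assumed. It gives $\mu_Q(S_Q\cap T_\rho\cap E_{v_0}^{\mathrm{ext}})\gtrsim\rho^{3/2}$, with constants depending only on $Q$, on the lower bound of $\chi$ near $x_0$, and on the tube conventions. Combining the two steps yields $\|\phi_T\|_{L^2(\mathrm d\mu_Q)}^2\gtrsim\rho^{-1/2}$. Taking square roots gives $\|\phi_T\|_{L^2(\mathrm d\mu_Q)}\gtrsim\rho^{-1/4}$.

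Finally, I would use the normalization $\|\phi_T\|_{L^2(\mathbb R^3)}\sim1$ to rewrite the bound as $\rho^{-1/4}\|\phi_T\|_{L^2(\mathbb R^3)}$. The parabolic convention $\rho=R^{-1/2}$ gives $\rho^{-1/4}=R^{1/8}$.

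The only point requiring care is consistency of the tube. The tube $T_\rho$ in Lemma~\ref{lem:quadratic-extreme-strip-mass} must be the same tube on which the packet lower bound holds. In the intended application, Lemma~\ref{lem:model-wave-packet-in-box} only provides the lower bound on a subtube $T'$ of comparable radius. So I would check that the set $X(\{|u_1|\le c\rho^{1/2},|u_2|\le c\rho\})$ constructed in Lemma~\ref{lem:quadratic-extreme-strip-mass} already lies in $T'$ after shrinking $c$. That set sits within $O(\rho)$ of the axis through the tangency point, so one simply centres the packet's axis at $x_0$ in direction $v_0$. As stated, however, the proposition assumes the lower bound directly on $S_Q\cap T_\rho\cap E_{v_0}^{\mathrm{ext}}$, so this is not an obstacle here; the argument is a direct two-line combination.
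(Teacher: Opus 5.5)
Your proposal is correct and matches the paper's proof: restrict the $\mu_Q$-integral to $S_Q\cap T_\rho\cap E_{v_0}^{\mathrm{ext}}$, combine $|\phi_T|^2\gtrsim\rho^{-2}$ there with the mass bound $\gtrsim\rho^{3/2}$ from Lemma~\ref{lem:quadratic-extreme-strip-mass}, take square roots, and use $\|\phi_T\|_{L^2(\mathbb R^3)}\sim1$ together with $\rho=R^{-1/2}$. Your closing check that the lower-bound subtube of the model packet contains the set built in that lemma is relevant to the application in Corollary~\ref{cor:quadratic-no-uniform-bound}, but, as you say, the proposition as stated does not need it.
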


\begin{proof}
By the pointwise lower bound and Lemma~\ref{lem:quadratic-extreme-strip-mass},
\[
\|\phi_T\|_{L^2(\mathrm{d}\mu_Q)}^2
\ge
\int_{S_Q\cap T_\rho\cap E_{v_0}^{\mathrm{ext}}}
|\phi_T(x)|^2\,\mathrm{d}\mu_Q(x)
\gtrsim
\rho^{-2}\rho^{3/2}
=
\rho^{-1/2}.
\]
Taking square roots gives
\[
\|\phi_T\|_{L^2(\mathrm{d}\mu_Q)}
\gtrsim
\rho^{-1/4}.
\]
Since $\|\phi_T\|_{L^2(\mathbb{R}^3)}\sim1$ and $\rho=R^{-1/2}$, the claim follows.
\end{proof}

\begin{cor}[obstruction to a uniform bound]\label{cor:quadratic-no-uniform-bound}
In the quadratic model with positive surface density, there cannot exist a constant $C$ independent of $R$ such that
\[
\|\mathcal G_R f\|_{L^2(\mathrm{d}\mu_Q)}
\le
C\|f\|_{L^2(\mathbb{R}^3)}
\]
for the class of functions containing the model packets of Lemma~\ref{lem:model-wave-packet-in-box}.
\end{cor}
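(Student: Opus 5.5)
We argue by contradiction, combining the tangent packet test of Proposition~\ref{prop:quadratic-extreme-wave-packet-cost} with the persistence lemma for the angular partition. Suppose a constant $C$ independent of $R$ existed with $\|\mathcal G_R f\|_{L^2(\mathrm d\mu_Q)}\le C\|f\|_{L^2(\mathbb R^3)}$ for all admissible $f$, including the model packets.

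\emph{Choice of point and box.} Fix a point $x_0$ of $S_Q$ near which $\chi$ is bounded below, and a tangent direction $v_0\in T_{x_0}S_Q$. For each large $R$, choose a frequency box $\Theta_0$ in the decomposition of $\Sigma_R$ whose central physical direction $v_{\Theta_0}$ satisfies $|v_{\Theta_0}-v_0|\lesssim\rho$. This is possible because the caps have aperture $\sim\rho$ and cover the sphere. To keep an exact tangent direction, we may instead replace $v_0$ by $v_{\Theta_0}$ itself: since $v_{\Theta_0}$ is $O(\rho)$-close to $T_{x_0}S_Q$, and the Gauss map of the elliptic patch is a local diffeomorphism, there is a point $x_0'$ with $|x_0'-x_0|\lesssim\rho$ at which $v_{\Theta_0}$ is exactly tangent. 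For $R$ large, $\chi$ is still bounded below near $x_0'$.

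\emph{The packet.} Apply Lemma~\ref{lem:quadratic-extreme-strip-mass} at $(x_0',v_{\Theta_0})$. This gives a tangent tube $T_\rho$ with $\mu_Q(S_Q\cap T_\rho\cap E^{\mathrm{ext}}_{v_{\Theta_0}})\gtrsim\rho^{3/2}$. Next, apply Lemma~\ref{lem:model-wave-packet-in-box} with frequency profiles small enough that $\supp\widehat{\phi_T}\subset\Theta_0^\circ$, centring the packet so that its subtube $T'$, on which $|\phi_T|\ge c\rho^{-1}$, contains the portion of $T_\rho$ used in the proof of Lemma~\ref{lem:quadratic-extreme-strip-mass}.

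\emph{Main obstacle.} The delicate step is this alignment of $T'$ with $T_\rho$. Lemma~\ref{lem:model-wave-packet-in-box} places $T'$ at transverse radius $c_0\rho$ and length $c_0$, whereas Lemma~\ref{lem:quadratic-extreme-strip-mass} needs only $|u_1|\le c\rho^{1/2}$ and $|u_2|\le c\rho$. We therefore shrink the constant $c$ in that lemma so that the parameter rectangle maps into $T'$, which costs only constants. Since $\widehat{\phi_T}$ is compactly supported in a box contained in $\Sigma_R$, the packet is admissible.

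\emph{Conclusion.} Lemma~\ref{lem:model-wave-packet-square-function-lower} gives $\mathcal G_R\phi_T\ge c|\phi_T|$ pointwise. Proposition~\ref{prop:quadratic-extreme-wave-packet-cost} then yields
\[
\|\mathcal G_R\phi_T\|_{L^2(\mathrm d\mu_Q)}\ge c\|\phi_T\|_{L^2(\mathrm d\mu_Q)}\gtrsim R^{1/8}\|\phi_T\|_{L^2(\mathbb R^3)}.
\]
Combining this with the assumed bound gives $R^{1/8}\lesssim C$ for all large $R$, which is a contradiction.
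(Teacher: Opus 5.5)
Your proposal is correct and follows essentially the same route as the paper: choose a box $\Theta_0$ whose direction is tangent to $S_Q$ where $\chi>0$, take $f=\phi_T$ to be the model packet, use the interior-plateau persistence to get $\mathcal G_R\phi_T\gtrsim|\phi_T|$, and invoke the tangent packet test to obtain growth $R^{1/8}$, which is incompatible with any $R$-independent constant. Your extra steps only make explicit the alignment details that the paper passes over with ``up to the angular error allowed by the aperture'': relocating the tangency point so that $v_{\Theta_0}$ is exactly tangent, and shrinking $c$ so that the extremal parameter rectangle lies inside the subtube $T'$ where $|\phi_T|\ge c\rho^{-1}$.
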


\begin{proof}
Let $x_0\in\operatorname{supp}\chi$ be a tangency point in whose surface neighbourhood $\chi$ is bounded below by a positive constant, and let $v_0\in T_{x_0}S_Q$ be a unit tangent direction as in Proposition~\ref{prop:quadratic-extreme-wave-packet-cost}. Choose a frequency box $\Theta_0$ of the parabolic decomposition whose central physical direction is $v_0$, up to the angular error allowed by the aperture $R^{-1/2}$. Let $\phi_T$ be the model packet given by Lemma~\ref{lem:model-wave-packet-in-box}, associated with $\Theta_0$ and adapted to the tangent tube $T$ used in that proposition.

Taking $f=f_{\Theta_0}=\phi_T$ and $f_\Theta=0$ for $\Theta\neq\Theta_0$, the quantity $\mathcal G_R f$ agrees with $|\phi_T|$ up to uniform constants coming from finite overlap. Proposition~\ref{prop:quadratic-extreme-wave-packet-cost} gives
\[
\|\mathcal G_R f\|_{L^2(\mathrm d\mu_Q)}
\gtrsim
\rho^{-1/4}.
\]
On the other hand, by Lemma~\ref{lem:model-wave-packet-in-box},
\[
\|f\|_{L^2(\mathbb R^3)}
=
\|\phi_T\|_{L^2(\mathbb R^3)}
\sim 1.
\]
Since $\rho=R^{-1/2}$, one has
\[
\rho^{-1/4}=R^{1/8}.
\]
Therefore, a uniform bound
\[
\|\mathcal G_R f\|_{L^2(\mathrm d\mu_Q)}
\le C\|f\|_{L^2(\mathbb R^3)}
\]
with $C$ independent of $R$ is impossible in this class of tests.
\end{proof}

\subsection{Refined reading: transversal, extreme, and intermediate}\label{subsec:quadratic-refined-decomposition}

The preceding geometry is compatible with an upper estimate with loss $R^{1/8}$, but excludes a bound uniform in $R$. The decomposition compatible with the quadratic model separates the contribution into three regimes:
\[
\text{transversal},
\qquad
\text{extreme},
\qquad
\text{intermediate}.
\]

The transversal regime corresponds to directions $v$ for which $|n_Q(x)\cdot v|$ is bounded below in the relevant region. There the projection onto $v^\perp$ has effective two-dimensional behaviour and the expected quadratic cost is uniform.

The extreme regime corresponds to
\[
|n_Q(x)\cdot v|\le \rho^{1/2}.
\]
In this regime, Lemma~\ref{lem:quadratic-extreme-strip-mass} shows that a tangent tube may capture mass $\rho^{3/2}$, and therefore produces the optimal quadratic cost $\rho^{-1/2}$.

The intermediate regime corresponds to
\[
\rho^{1/2}<|n_Q(x)\cdot v|<\tau,
\]
with $\tau\in(0,1)$ fixed. This regime is distinct from the extreme strip. Its treatment requires additional Gramian, oscillatory, or internal almost-orthogonality estimates. A uniform bound for the intermediate strip therefore requires a separate
input; it does not follow from the extreme tubular mass.

\begin{prop}[geometric assembly of costs]\label{prop:quadratic-geometric-cost-assembly}
Suppose that, for the angular decomposition under consideration, the transversal contribution satisfies a quadratic bound with constant $C_{\mathrm{tr}}$ independent of $R$, and that the remaining nontransversal contribution is controlled with quadratic cost
\[
C_{\mathrm{nt}}\rho^{-1/2},
\]
where $C_{\mathrm{nt}}$ is independent of $R$. Then the total quadratic cost is
\[
\lesssim
\rho^{-1/2},
\]
and the corresponding norm cost is
\[
\lesssim
\rho^{-1/4}=R^{1/8}.
\]
\end{prop}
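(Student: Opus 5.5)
The statement to prove is Proposition~\ref{prop:quadratic-geometric-cost-assembly}. It is a bookkeeping statement, so the plan is to split the quadratic form, add the two given bounds, and compare powers of $\rho$.

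I would write the quantity to be controlled as the quadratic expression
\[
\int \sum_\Theta |f_\Theta(x)|^2\,\mathrm d\mu_Q(x)
=\|\mathcal G_R f\|_{L^2(\mathrm d\mu_Q)}^2 .
\]
I would split it into a transversal part and a nontransversal part, doing so at the level of the quadratic integrand. Concretely, for each $\Theta$ I would partition $\operatorname{supp}\mu_Q$ into the region where $|n_Q(x)\cdot v_\Theta|\ge\tau$ and its complement, and define $\mu_Q=\mu_{\mathrm{tr},\Theta}+\mu_{\mathrm{nt},\Theta}$ accordingly. Since the integrand $|f_\Theta|^2$ is nonnegative and the measures add, the quadratic integral splits exactly:
\[
\sum_\Theta\int |f_\Theta|^2\,\mathrm d\mu_{\mathrm{tr},\Theta}
+\sum_\Theta\int |f_\Theta|^2\,\mathrm d\mu_{\mathrm{nt},\Theta}.
\]
No cross terms appear, and this is the reason to split the measure rather than the function.

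Next I would insert the two hypotheses. The transversal piece is at most $C_{\mathrm{tr}}\|f\|_{L^2}^2$, and the nontransversal piece is at most $C_{\mathrm{nt}}\rho^{-1/2}\|f\|_{L^2}^2$. If the hypotheses are stated with $\sum_\Theta\|f_\Theta\|_2^2$ or with $\sum_T|c_T|^2$ on the right, I would first pass to $\|f\|_2^2$. For the first form this uses Plancherel and the finite overlap of the $\psi_\Theta$, which give $\sum_\Theta\|f_\Theta\|_2^2\lesssim\|f\|_2^2$. For the second it uses, in addition, the Bessel inequality of Proposition~\ref{prop:standard-wave-packet-decomposition}. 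Because $\rho\le1$, we have $1\le\rho^{-1/2}$, so the sum is at most $(C_{\mathrm{tr}}+C_{\mathrm{nt}})\rho^{-1/2}\|f\|_2^2$. Taking square roots gives the norm cost $\rho^{-1/4}=R^{1/8}$, using $\rho=R^{-1/2}$.

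The only delicate point is making sure the two hypothesized bounds refer to complementary pieces whose quadratic contributions genuinely add. I would guarantee this by the measure splitting above, which is legitimate because the square function is diagonal in $\Theta$. With that choice nothing further is needed, and constants remain independent of $R$.
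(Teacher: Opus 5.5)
Your proposal is correct and is essentially the paper's argument: the paper applies $AX+BY\le\max\{A,B\}(X+Y)$ with $A=C_{\mathrm{tr}}$ and $B=C_{\mathrm{nt}}\rho^{-1/2}$, bounds the maximum by $C\rho^{-1/2}$ using $\rho\le1$, and takes square roots. Your pointwise splitting of $\mu_Q$ for each $\Theta$ is a harmless extra specification; in its actual assembly the paper instead splits the family of boxes into $\mathbb T_{\tau_0}$ and $\mathbb E_{\tau_0}$, which avoids cross terms just as well because the square function is diagonal in $\Theta$, and obtaining $C_{\mathrm{tr}}+C_{\mathrm{nt}}$ rather than a maximum only changes the constant.
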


\begin{proof}
The claim follows from
\[
A X+B Y\le \max\{A,B\}(X+Y),
\qquad X,Y\ge0,
\]
applied with $A=C_{\mathrm{tr}}$ and $B=C_{\mathrm{nt}}\rho^{-1/2}$. Since $0<\rho\le1$,
\[
\max\{C_{\mathrm{tr}},C_{\mathrm{nt}}\rho^{-1/2}\}
\le
C\rho^{-1/2},
\]
with $C$ depending only on $C_{\mathrm{tr}}$ and $C_{\mathrm{nt}}$. Taking square roots converts the quadratic cost $\rho^{-1/2}$ into the norm cost $\rho^{-1/4}$.
\end{proof}

\section{Analytic reduction to the optimal scale}\label{sec:reduccion-wave-packets}

This section assembles the diagonal estimate in the elliptic quadratic model fixed in Subsection~\ref{subsec:modelo-cuadratico-medida}. We keep the notation fixed there. The local positivity of $\chi$ is used only for the lower-bound test, not for the upper bound.

The analytic reduction separates two quadratic inputs: a transversal
contribution of uniform cost and a nontransversal contribution of cost
$\rho^{-1/2}$. After taking square roots, the second cost produces the loss
\[
\rho^{-1/4}=R^{1/8}.
\]
The geometric verification of the extreme mass is carried out in
Section~\ref{sec:bad-geometry}; here we record how that scale enters the
nontransversal control, the analytic assembly, and the lower bound.

\begin{defn}[transversal--nontransversal partition]\label{def:quadratic-transversal-extreme-partition}
Let $\tau_0\in(0,1)$ be fixed, independently of $R$. For each frequency box $\Theta$, let $v_\Theta\in S^2$ be the physical direction associated with the wave packets dual to $\Theta$. Define
\[
\mathbb T_{\tau_0}
:=
\left\{
\Theta:
\inf_{x\in \operatorname{supp}\chi}
|n_Q(x)\cdot v_\Theta|
\ge \tau_0
\right\},
\]
and
\[
\mathbb E_{\tau_0}
:=
\{\Theta\}\setminus \mathbb T_{\tau_0}.
\]
The subfamily $\mathbb T_{\tau_0}$ will be treated by the transversal projective mechanism. The subfamily $\mathbb E_{\tau_0}$ contains every regime not covered by this uniform transversality and is estimated with the admissible nontransversal quadratic cost $\rho^{-1/2}$. Within this second subfamily, the extreme strip is controlled by tubular Schur and the complement by degenerate projection.
\end{defn}

\begin{prop}[quadratic assembly of two contributions]\label{prop:quadratic-two-piece-assembly}
Let $\mu$ be a positive Borel measure on $\mathbb{R}^{3}$. Suppose that the family of frequency boxes admits a disjoint decomposition
\[
\{\Theta\}=\mathbb{T}\,\dot\cup\,\mathbb{E},
\]
and that there exist constants $A,B\ge0$ such that
\[
\int_{\mathbb{R}^{3}}
\sum_{\Theta\in\mathbb{T}} |f_{\Theta}(x)|^{2}\,\mathrm{d}\mu(x)
\le
A
\sum_{\Theta\in\mathbb{T}}
\|f_{\Theta}\|_{L^{2}(\mathbb{R}^{3})}^{2}
\]
and
\[
\int_{\mathbb{R}^{3}}
\sum_{\Theta\in\mathbb{E}} |f_{\Theta}(x)|^{2}\,\mathrm{d}\mu(x)
\le
B
\sum_{\Theta\in\mathbb{E}}
\|f_{\Theta}\|_{L^{2}(\mathbb{R}^{3})}^{2}.
\]
Then
\[
\int_{\mathbb{R}^{3}}
\sum_{\Theta}|f_{\Theta}(x)|^{2}\,\mathrm{d}\mu(x)
\le
\max\{A,B\}
\sum_{\Theta}
\|f_{\Theta}\|_{L^{2}(\mathbb{R}^{3})}^{2}.
\]
\end{prop}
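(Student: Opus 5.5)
The argument is a short termwise bound. Since the decomposition $\{\Theta\}=\mathbb T\,\dot\cup\,\mathbb E$ is disjoint, I will first split the square function pointwise:
\[
\sum_\Theta |f_\Theta(x)|^2=\sum_{\Theta\in\mathbb T}|f_\Theta(x)|^2+\sum_{\Theta\in\mathbb E}|f_\Theta(x)|^2
\qquad\text{for every }x\in\mathbb R^3 .
\]
All terms are nonnegative, so the integral against the positive measure $\mu$ is additive over this splitting (by monotone convergence if the family is infinite). The left-hand side of the desired inequality is therefore the sum of the two integrals appearing in the hypotheses.

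Next I apply the two hypotheses to bound this sum:
\[
A\sum_{\Theta\in\mathbb T}\|f_\Theta\|_{L^2(\mathbb R^3)}^2+B\sum_{\Theta\in\mathbb E}\|f_\Theta\|_{L^2(\mathbb R^3)}^2 .
\]
Then I use the elementary inequality $AX+BY\le\max\{A,B\}(X+Y)$ for $X,Y\ge0$, exactly as in the proof of Proposition~\ref{prop:quadratic-geometric-cost-assembly}. With $X$ and $Y$ the two sums of $L^2$ norms, disjointness gives $X+Y=\sum_\Theta\|f_\Theta\|_{L^2(\mathbb R^3)}^2$, which is the claimed bound.

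There is no real obstacle here. The only points to check are that the decomposition is disjoint, so no $\Theta$ is counted twice, and that both sides may be infinite, where the inequality holds trivially in $[0,\infty]$. In the later application one takes $A=C_{\mathrm{tr}}$ and $B=C_{\mathrm{nt}}\rho^{-1/2}$ with $\mathbb T=\mathbb T_{\tau_0}$ and $\mathbb E=\mathbb E_{\tau_0}$. One then adds the almost-orthogonality bound $\sum_\Theta\|f_\Theta\|_{L^2(\mathbb R^3)}^2\lesssim\|f\|_{L^2(\mathbb R^3)}^2$ from the finite overlap of the $\psi_\Theta$. That step belongs to the assembly of the main theorem, not to this proposition.
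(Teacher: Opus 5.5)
Your proof is correct and follows the paper's own argument: split the square function pointwise using disjointness, integrate against $\mu$, apply the two hypotheses, and conclude with $AX+BY\le\max\{A,B\}(X+Y)$ for $X,Y\ge0$. Your remarks on monotone convergence and possibly infinite values are harmless refinements that the paper leaves implicit.
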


\begin{proof}
By the disjoint decomposition,
\[
\sum_{\Theta}|f_{\Theta}(x)|^{2}
=
\sum_{\Theta\in\mathbb{T}}|f_{\Theta}(x)|^{2}
+
\sum_{\Theta\in\mathbb{E}}|f_{\Theta}(x)|^{2}.
\]
Integrating with respect to $\mu$ and applying the two hypotheses gives
\[
\int_{\mathbb{R}^{3}}
\sum_{\Theta}|f_{\Theta}(x)|^{2}\,\mathrm{d}\mu(x)
\le
A
\sum_{\Theta\in\mathbb{T}}
\|f_{\Theta}\|_{L^{2}(\mathbb{R}^{3})}^{2}
+
B
\sum_{\Theta\in\mathbb{E}}
\|f_{\Theta}\|_{L^{2}(\mathbb{R}^{3})}^{2}.
\]
The conclusion follows from $AX+BY\le \max\{A,B\}(X+Y)$ for $X,Y\ge0$.
\end{proof}

\begin{remark}[scope of the extreme family bound]\label{rem:quadratic-extreme-family-scope}
The individual tubular bound
\[
\mu_Q(S_Q\cap T_\rho\cap E_v^{\mathrm{ext}})
\lesssim \rho^{3/2}
\]
from Lemma~\ref{lem:quadratic-extreme-strip-mass-upper} identifies the correct geometric scale per tube, but does not by itself imply a quadratic inequality for a sum of wave packets.

The passage from individual tubular mass to a family bound is carried out by Proposition~\ref{prop:quadratic-extreme-family-schur}. In Theorem~\ref{thm:quadratic-diagonal-sharp}, this bound is used box by box through Proposition~\ref{prop:quadratic-nontransversal-box-cost}. The extreme strip contributes the admissible quadratic cost $\rho^{-1/2}$; the complement is controlled by degenerate projection with the same cost.

In the elliptic model, the Schur hypotheses do not remain as additional assumptions: they are verified in
Corollary~\ref{cor:quadratic-extreme-schur-parallel}.

Proposition~\ref{prop:quadratic-extreme-family-schur} provides the abstract functional Schur step. The geometric verification of its hypotheses for the tubular families appearing in the standard decomposition is carried out in Corollary~\ref{cor:quadratic-extreme-schur-parallel}, using the pointwise overlap of tails from Lemma~\ref{lem:parallel-tube-tail-overlap} and the uniform tail mass bound from Section~\ref{sec:bad-geometry}.
\end{remark}

\begin{prop}[extreme family bound under tubular Schur]\label{prop:quadratic-extreme-family-schur}
Let $S_Q$, $\mu_Q$, and $\rho$ be as in Section~\ref{sec:bad-geometry}. Fix a tangent direction $v_0$ in a quadratic chart where Lemma~\ref{lem:quadratic-extreme-strip-mass-upper} holds. Let $\mathbb{T}_{v_0}$ be a family of tubes of length comparable to $1$, transverse radius comparable to $\rho$, and direction parallel to $v_0$.

Suppose that for each $T\in\mathbb{T}_{v_0}$ one has a function $\phi_T$ normalized by
\[
\|\phi_T\|_{L^2(\mathbb{R}^3)}\sim 1
\]
and spatially adapted to $T$ in the sense that, for some $N>10$ and some constant $A\ge1$,
\[
|\phi_T(x)|
\le
A\rho^{-1}
\left(
1+\frac{\operatorname{dist}(x,T)}{\rho}
\right)^{-N}
\qquad
\text{for every }x\in\mathbb{R}^3.
\]
Assume moreover that the tubular family satisfies the extreme Schur control
\[
\sup_{T\in\mathbb{T}_{v_0}}
\sum_{T'\in\mathbb{T}_{v_0}}
\left|
\int_{E_{v_0}^{\mathrm{ext}}}
\phi_T(x)\overline{\phi_{T'}(x)}\,\mathrm{d}\mu_Q(x)
\right|
\le
C_{\mathrm{Sch}}\rho^{-1/2},
\]
and symmetrically
\[
\sup_{T'\in\mathbb{T}_{v_0}}
\sum_{T\in\mathbb{T}_{v_0}}
\left|
\int_{E_{v_0}^{\mathrm{ext}}}
\phi_T(x)\overline{\phi_{T'}(x)}\,\mathrm{d}\mu_Q(x)
\right|
\le
C_{\mathrm{Sch}}\rho^{-1/2}.
\]
Then, for every finitely supported set of coefficients $\{c_T\}_{T\in\mathbb{T}_{v_0}}$,
\[
\int_{E_{v_0}^{\mathrm{ext}}}
\left|
\sum_{T\in\mathbb{T}_{v_0}} c_T\phi_T(x)
\right|^2
\,\mathrm{d}\mu_Q(x)
\le
C_{\mathrm{Sch}}\rho^{-1/2}
\sum_{T\in\mathbb{T}_{v_0}} |c_T|^2.
\]
\end{prop}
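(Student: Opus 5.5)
This is the classical Schur test applied to the Gram matrix of the family on $L^2(\mathrm d\mu_Q\lfloor E_{v_0}^{\mathrm{ext}})$. Fix a finitely supported coefficient family $\{c_T\}$ and set $F:=\sum_T c_T\phi_T$. Since the sum is finite and each $\phi_T$ is bounded (by the pointwise bound $|\phi_T|\le A\rho^{-1}$) while $\mu_Q$ is a finite measure, $F\in L^2(\mathrm d\mu_Q)$ and all integrals below converge absolutely. Expanding the square gives
\[
\int_{E_{v_0}^{\mathrm{ext}}}|F|^2\,\mathrm d\mu_Q
=\sum_{T,T'} c_T\overline{c_{T'}}\,G(T,T'),
\qquad
G(T,T'):=\int_{E_{v_0}^{\mathrm{ext}}}\phi_T\overline{\phi_{T'}}\,\mathrm d\mu_Q .
\]
Only finitely many pairs contribute, so interchanging sum and integral is harmless.

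Next I bound the double sum by absolute values and apply the inequality $|c_T||c_{T'}|\le \tfrac12(|c_T|^2+|c_{T'}|^2)$, i.e.\ Schur's test with constant weights. This yields
\[
\sum_{T,T'}|c_T||c_{T'}||G(T,T')|
\le \tfrac12\sum_T |c_T|^2\sum_{T'}|G(T,T')|+\tfrac12\sum_{T'}|c_{T'}|^2\sum_{T}|G(T,T')|.
\]
The first inner sum is controlled by the row hypothesis and the second by the symmetric column hypothesis, each by $C_{\mathrm{Sch}}\rho^{-1/2}$. Hence the total is at most $C_{\mathrm{Sch}}\rho^{-1/2}\sum_T|c_T|^2$, with exactly the stated constant.

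There is no genuine obstacle. The only points needing care are the justification of the expansion and the interchange, and both are immediate from finite support and the boundedness of the packets on the compact support of $\mu_Q$. The normalization $\|\phi_T\|_{L^2}\sim1$, the decay exponent $N>10$, and the tangency geometry are not used here. They enter only later, when the Schur hypotheses are verified in Corollary~\ref{cor:quadratic-extreme-schur-parallel}.
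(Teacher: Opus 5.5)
Your proposal is correct and follows essentially the same route as the paper: the paper expands the square into the Gram matrix $K(T,T')$ and cites the discrete Schur criterion to bound its $\ell^2$ operator norm by $C_{\mathrm{Sch}}\rho^{-1/2}$. Your AM--GM step $|c_T||c_{T'}|\le\tfrac12(|c_T|^2+|c_{T'}|^2)$ is just that Schur test with constant weights written out, and it gives the same constant.
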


\begin{proof}
Define the matrix
\[
K(T,T')
:=
\int_{E_{v_0}^{\mathrm{ext}}}
\phi_T(x)\overline{\phi_{T'}(x)}\,\mathrm{d}\mu_Q(x).
\]
Then
\[
\int_{E_{v_0}^{\mathrm{ext}}}
\left|
\sum_{T\in\mathbb{T}_{v_0}} c_T\phi_T(x)
\right|^2
\,\mathrm{d}\mu_Q(x)
=
\sum_{T,T'\in\mathbb{T}_{v_0}}
c_T\overline{c_{T'}}K(T,T').
\]
By the discrete Schur criterion applied to the matrix $K$, the two hypotheses
\[
\sup_{T}
\sum_{T'} |K(T,T')|
\le
C_{\mathrm{Sch}}\rho^{-1/2},
\qquad
\sup_{T'}
\sum_T |K(T,T')|
\le
C_{\mathrm{Sch}}\rho^{-1/2}
\]
imply that the operator associated with $K$ is bounded on $\ell^2(\mathbb{T}_{v_0})$ with norm at most $C_{\mathrm{Sch}}\rho^{-1/2}$. Consequently,
\[
\left|
\sum_{T,T'\in\mathbb{T}_{v_0}}
c_T\overline{c_{T'}}K(T,T')
\right|
\le
C_{\mathrm{Sch}}\rho^{-1/2}
\sum_{T\in\mathbb{T}_{v_0}} |c_T|^2.
\]
This proves the claim.
\end{proof}

\begin{lem}[pointwise overlap of anisotropic tails]\label{lem:parallel-tube-tail-overlap}
Let $\rho\in(0,1]$, let $v\in S^2$, and fix an orthonormal basis $(e_1,e_2,v)$. For $m=(m_1,m_2)\in\mathbb Z^2$ and $j\in\mathbb Z$, define
\[
w_{m,j}(x)
:=
\left(
1+
\frac{|\pi_v x-\rho m_1e_1-\rho m_2e_2|}{\rho}
+
|x\cdot v-j|
\right)^{-N},
\]
where $\pi_v$ is the orthogonal projection onto $v^\perp$.

If $N>4$, then there exists a constant $C_N\ge1$, independent of $\rho$ and of $v$, such that
\[
\sup_{x\in\mathbb R^3}
\sum_{m\in\mathbb Z^2}
\sum_{j\in\mathbb Z}
w_{m,j}(x)
\le C_N.
\]
\end{lem}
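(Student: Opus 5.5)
The plan is to bound the double sum by a product of a two-dimensional lattice sum and a one-dimensional lattice sum, each uniformly bounded. Fix $x\in\mathbb R^3$ and write $y:=\pi_v x\in v^\perp$ and $s:=x\cdot v$. With the basis $(e_1,e_2)$ of $v^\perp$, set $z:=\rho^{-1}(y\cdot e_1,\,y\cdot e_2)\in\mathbb R^2$. Then
\[
\frac{|\pi_v x-\rho m_1e_1-\rho m_2e_2|}{\rho}=|z-m|,
\]
so the weight becomes $w_{m,j}(x)=(1+|z-m|+|s-j|)^{-N}$. This already removes all dependence on $\rho$ and on $v$: every remaining quantity is a lattice sum in $\mathbb Z^2\times\mathbb Z$ centered at the real point $(z,s)\in\mathbb R^3$.

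Next, split the exponent. Since $1+a+b\ge(1+a)^{\theta}(1+b)^{1-\theta}$ for $a,b\ge0$ and $\theta\in[0,1]$, the choice $\theta=3/4$ gives
\[
w_{m,j}(x)\le(1+|z-m|)^{-3N/4}\,(1+|s-j|)^{-N/4}.
\]
The double sum then factors as
\[
\Bigl(\sum_{m\in\mathbb Z^2}(1+|z-m|)^{-3N/4}\Bigr)\Bigl(\sum_{j\in\mathbb Z}(1+|s-j|)^{-N/4}\Bigr).
\]

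It remains to bound each factor uniformly in the center. By comparison with integrals, or by counting $O(2^{dk})$ lattice points in dyadic shells around the center, $\sum_{m\in\mathbb Z^d}(1+|z-m|)^{-p}\le C_{p,d}$ uniformly in $z\in\mathbb R^d$ whenever $p>d$. For $d=2$ this requires $3N/4>2$, that is $N>8/3$. For $d=1$ it requires $N/4>1$, that is $N>4$. Both conditions hold under the hypothesis $N>4$, and the resulting $C_N$ depends only on $N$.

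There is no real obstacle here. The only point needing care is choosing a split of the exponent that the hypothesis $N>4$ permits. With the split $\theta=3/4$, the one-dimensional factor is exactly the binding constraint, which explains the threshold $N>4$. Alternatively, one could avoid splitting altogether by summing directly over $\mathbb Z^3$: there are $O(2^{3k})$ lattice points at distance about $2^k$ from $(z,s)$, so the sum converges already for $N>3$.
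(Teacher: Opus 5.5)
Your proof is correct and follows the paper's route: rescale the transverse variable by $\rho^{-1}$, factor the weight via $1+A+B\ge(1+A)^{\theta}(1+B)^{1-\theta}$, and bound the resulting two-dimensional and one-dimensional lattice sums uniformly in the center. The only difference is that the paper takes the symmetric split $\theta=1/2$, which makes the two-dimensional factor the binding one, whereas your $\theta=3/4$ makes the one-dimensional factor binding; both give the same threshold $N>4$. Your closing remark, summing directly over $\mathbb Z^3$, correctly shows that $N>3$ already suffices.
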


\begin{proof}
Fix $x\in\mathbb R^3$ and write
\[
x=y+sv,
\qquad
y=\pi_vx\in v^\perp,\quad s=x\cdot v.
\]
Then
\[
\sum_{m\in\mathbb Z^2}
\sum_{j\in\mathbb Z}
w_{m,j}(x)
=
\sum_{m\in\mathbb Z^2}
\sum_{j\in\mathbb Z}
\left(
1+
\left|\rho^{-1}y-m_1e_1-m_2e_2\right|
+
|s-j|
\right)^{-N}.
\]
Using
\[
1+A+B\ge (1+A)^{1/2}(1+B)^{1/2},
\]
we obtain
\[
(1+A+B)^{-N}
\le
(1+A)^{-N/2}(1+B)^{-N/2}.
\]
Therefore,
\[
\sum_{m,j}w_{m,j}(x)
\le
\left[
\sum_{m\in\mathbb Z^2}
\left(
1+\left|\rho^{-1}y-m_1e_1-m_2e_2\right|
\right)^{-N/2}
\right]
\left[
\sum_{j\in\mathbb Z}
(1+|s-j|)^{-N/2}
\right].
\]
The first sum is uniformly bounded if $N/2>2$, and the second if $N/2>1$. Thus it is enough to take $N>4$, as assumed.
\end{proof}

\begin{cor}[uniform verification of extreme Schur in parallel families]\label{cor:quadratic-extreme-schur-parallel}
Let $S_Q$, $\mu_Q$, and $\rho$ be as in Section~\ref{sec:bad-geometry}. Fix a compact chart $U_0\Subset U$ as in Subsection~\ref{subsec:elliptic-uniform-extreme-geometry}. Let $v\in S^2$ be a relevant direction for the patch, and let $\mathbb T_v$ be a family of tubes of length comparable to $1$, transverse radius comparable to $\rho$, and direction parallel to $v$, parametrized by a transverse lattice of spacing comparable to $\rho$ in $v^\perp$ and by longitudinal windows of length comparable to $1$.

Suppose that the family is indexed as
\[
\mathbb T_v=\{T_{m,j}:m\in\mathbb Z^2,\ j\in\mathbb Z\},
\]
with respect to an orthonormal basis $(e_1,e_2,v)$, and that, for each $T_{m,j}\in\mathbb T_v$, the function $\phi_{m,j}$ satisfies
\[
|\phi_{m,j}(x)|
\le
A\rho^{-1}
\left(
1+
\frac{|\pi_v x-\rho m_1e_1-\rho m_2e_2|}{\rho}
+
|x\cdot v-j|
\right)^{-N}
\qquad
\text{for every }x\in\mathbb R^3,
\]
with $N>4$ and $A\ge1$. Then there exists a constant $C\ge1$, depending only on $N$, on $A$, on $Q$, on $U_0$, on $\|\chi\|_{L^\infty}$, and on the geometric constants of the tubular family, but independent of $\rho$, of $v$, and of the family $\mathbb T_v$, such that
\[
\sup_{T\in\mathbb T_v}
\sum_{T'\in\mathbb T_v}
\left|
\int_{E_v^{\mathrm{ext}}}
\phi_T(x)\overline{\phi_{T'}(x)}
\,\mathrm d\mu_Q(x)
\right|
\le
C\rho^{-1/2},
\]
and also
\[
\sup_{T'\in\mathbb T_v}
\sum_{T\in\mathbb T_v}
\left|
\int_{E_v^{\mathrm{ext}}}
\phi_T(x)\overline{\phi_{T'}(x)}
\,\mathrm d\mu_Q(x)
\right|
\le
C\rho^{-1/2}.
\]
Consequently, the Schur hypothesis of Proposition~\ref{prop:quadratic-extreme-family-schur} is verified for $\mathbb T_v$ with a constant $C_{\mathrm{Sch}}\le C$ uniform in $v$ and in $R$.
\end{cor}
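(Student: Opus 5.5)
To prove Corollary~\ref{cor:quadratic-extreme-schur-parallel}, we reduce each Schur row sum to a single weighted integral over the extreme strip. Then we control that integral by the tail mass bound of Lemma~\ref{lem:elliptic-tubular-sublevel}, with the pointwise overlap of Lemma~\ref{lem:parallel-tube-tail-overlap} absorbing the sum over $T'$. By the symmetry of the kernel bound in $T$ and $T'$, it suffices to treat the first (row) sum.

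Fix $T=T_{m,j}$. Moving the absolute value inside the integral and using the pointwise hypothesis on both packets gives
\[
\sum_{T'}\left|\int_{E_v^{\mathrm{ext}}}\phi_T\overline{\phi_{T'}}\,\mathrm d\mu_Q\right|
\le
A^2\rho^{-2}\int_{E_v^{\mathrm{ext}}} w_{m,j}(x)\sum_{m',j'}w_{m',j'}(x)\,\mathrm d\mu_Q(x).
\]
Since $N>4$, Lemma~\ref{lem:parallel-tube-tail-overlap} bounds the inner sum by $C_N$ uniformly in $x$, $\rho$ and $v$. The row sum is therefore at most
\[
C_NA^2\rho^{-2}\int_{E_v^{\mathrm{ext}}}w_{m,j}\,\mathrm d\mu_Q .
\]
It remains to show that
\[
\int_{E_v^{\mathrm{ext}}}w_{m,j}\,\mathrm d\mu_Q\lesssim\rho^{3/2},
\]
uniformly in $v$, $m$ and $j$.

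For this, drop the longitudinal factor in $w_{m,j}$ (it is at most $1$). Let $\ell_{m}$ be the line through $\rho m_1e_1+\rho m_2e_2$ parallel to $v$. The remaining transverse weight depends only on $\operatorname{dist}(x,\ell_m)=|\pi_vx-\rho m_1e_1-\rho m_2e_2|$. Since $\operatorname{supp}\mu_Q$ is compact, only a bounded portion of $\ell_m$ matters, so we may replace $\ell_m$ by a tube $T$ of length comparable to $1$ (enlarged by a fixed constant) and radius $\rho$ in direction $v$.

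Decompose $E_v^{\mathrm{ext}}$ into the dyadic shells
\[
\{2^{k-1}\rho<\operatorname{dist}(x,T)\le2^k\rho\},
\]
on which $w_{m,j}\lesssim 2^{-Nk}$.
\begin{itemize}
\item For $2^k\rho\le\sigma_0$, apply Lemma~\ref{lem:elliptic-tubular-sublevel} with $\sigma=2^k\rho$. This gives mass $\lesssim\rho^{1/2}2^k\rho=2^k\rho^{3/2}$, so these shells contribute $\sum_k2^{(1-N)k}\rho^{3/2}\lesssim\rho^{3/2}$.
\item For $2^k\rho>\sigma_0$, bound the mass by the total mass $\mu_Q(\mathbb R^3)\lesssim1$. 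Since $2^{-Nk}\lesssim(\rho/\sigma_0)^N$, these shells contribute $\lesssim\rho^{N}\le\rho^{3/2}$ because $N>4$.
\end{itemize}
If $v$ is not relevant, $E_v^{\mathrm{ext}}$ is contained in a uniformly thin strip near the boundary of the tangency set, or it is empty for small $\rho$. The statement in any case only concerns relevant $v$.

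Combining the steps, the row sum is $\lesssim A^2\rho^{-2}\rho^{3/2}=A^2\rho^{-1/2}$ with constants uniform in $v$ and $R$. The Schur hypothesis of Proposition~\ref{prop:quadratic-extreme-family-schur} then follows.

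The main obstacle is the uniform sublevel input. Lemma~\ref{lem:elliptic-tubular-sublevel} is stated for $\sigma\ge\rho$ and for tubes of length comparable to $1$. One must check that the neighbourhood of the full line, intersected with the compact support, is covered by boundedly many such tubes with constants independent of $v$. This uniformity comes from compactness in Lemma~\ref{lem:quadratic-uniform-normal-form}. The exponent bookkeeping also has to be checked: $N>4$ is needed both for the overlap lemma and for the decay of $\sum_k 2^{(1-N)k}$.
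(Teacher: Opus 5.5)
Your proposal is correct and follows the paper's proof. The tail-overlap lemma absorbs the sum over $T'$. The bound $\int_{E_v^{\mathrm{ext}}} w_T\,\mathrm d\mu_Q\lesssim\rho^{3/2}$ comes from dyadic shells, using Lemma~\ref{lem:elliptic-tubular-sublevel} when $2^k\rho\le\sigma_0$ and crude decay beyond. The column sum follows by symmetry of the kernel bound.

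Your choice to discard the longitudinal factor and measure distance to the axis line, truncated to the compact support, is more careful than the paper. The paper builds its shells from $\operatorname{dist}(x,T_{m,j})$ and asserts $w_T\lesssim 2^{-Nk}$ on them. That bound fails near the ends of $T_{m,j}$, because the longitudinal decay of $w_{m,j}$ is only at unit scale, not at scale $\rho$. Your version repairs this without changing the conclusion.
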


\begin{proof}
For $T,T'\in\mathbb T_v$, define
\[
K(T,T')
:=
\int_{E_v^{\mathrm{ext}}}
\phi_T(x)\overline{\phi_{T'}(x)}
\,\mathrm d\mu_Q(x).
\]
By spatial adaptation,
\[
|K(T,T')|
\le
A^2\rho^{-2}
\int_{E_v^{\mathrm{ext}}}
w_T(x)w_{T'}(x)\,\mathrm d\mu_Q(x),
\]
where
\[
w_{m,j}(x)
:=
\left(
1+
\frac{|\pi_v x-\rho m_1e_1-\rho m_2e_2|}{\rho}
+
|x\cdot v-j|
\right)^{-N}.
\]
Summing in $T'$ and using Lemma~\ref{lem:parallel-tube-tail-overlap},
\[
\sum_{T'\in\mathbb T_v}|K(T,T')|
\le
A^2\rho^{-2}
\int_{E_v^{\mathrm{ext}}}
w_T(x)
\sum_{m',j'}w_{m',j'}(x)
\,\mathrm d\mu_Q(x)
\]
\[
\le
C A^2\rho^{-2}
\int_{E_v^{\mathrm{ext}}}
w_T(x)\,\mathrm d\mu_Q(x).
\]

We now estimate the last integral uniformly in the relevant direction $v$. Let $T=T_{m,j}$.
For dyadic integers $k\ge0$, set
\[
A_k(T)
:=
\left\{
x\in E_v^{\mathrm{ext}}:
2^{k-1}\rho\le \operatorname{dist}(x,T)<2^k\rho
\right\},
\]
with the evident modification for $k=0$. If $2^k\rho\le\sigma_0$, Lemma~\ref{lem:elliptic-tubular-sublevel}, applied with $\sigma=2^k\rho$, gives
\[
\mu_Q(A_k(T))
\le
\mu_Q\bigl(E_v^{\mathrm{ext}}\cap N_{2^k\rho}(T)\bigr)
\le
C\rho^{1/2}(2^k\rho)
=
C2^k\rho^{3/2}.
\]
Therefore, in this range,
\[
\int_{A_k(T)} w_T(x)\,\mathrm d\mu_Q(x)
\le
C2^{-Nk}2^k\rho^{3/2}
=
C2^{(1-N)k}\rho^{3/2}.
\]
Since $N>4$, the sum over all $k$ with $2^k\rho\le\sigma_0$ is bounded by $C\rho^{3/2}$.

For the remaining range $2^k\rho>\sigma_0$, the compact support of $\mu_Q$ and the rapid decay of $w_T$ give
\[
\sum_{2^k\rho>\sigma_0}
\int_{A_k(T)} w_T(x)\,\mathrm d\mu_Q(x)
\le
C\sum_{2^k\rho>\sigma_0}2^{-Nk}
\le
C_N \rho^N
\le
C_N\rho^{3/2},
\]
after increasing $C_N$, since $0<\rho\le1$ and $N>4$. Hence
\[
\int_{E_v^{\mathrm{ext}}}
w_T(x)\,\mathrm d\mu_Q(x)
\le
C\rho^{3/2}.
\]

Thus,
\[
\sum_{T'\in\mathbb T_v}|K(T,T')|
\le
C\rho^{-2}\rho^{3/2}
=
C\rho^{-1/2}.
\]
The symmetric estimate is obtained by interchanging the roles of $T$ and $T'$, using again the same pointwise overlap and the same uniform tail mass bound.
\end{proof}

\begin{remark}[boxwise diagonal reading of extreme Schur]\label{rem:quadratic-extreme-schur-boxwise}
Corollary~\ref{cor:quadratic-extreme-schur-parallel} verifies the Schur hypothesis of Proposition~\ref{prop:quadratic-extreme-family-schur} for a parallel tubular family associated with an arbitrary relevant direction of the patch, with constants uniform in that direction. This verification is applied box by box: for a frequency box $\Theta$, the wave packet family $\mathbb T(\Theta)$ has essentially fixed direction $v_\Theta$, and the Schur control applies to that family.

The extreme estimate used in Theorem~\ref{thm:quadratic-diagonal-sharp} is diagonal in $\Theta$:
\[
\int
\sum_{\Theta\in\mathbb E}|f_\Theta(x)|^2\,\mathrm d\mu_Q(x).
\]
Therefore, no cross terms between distinct frequency boxes appear. The extreme argument required for the assembly consists of applying the family bound inside each box $\Theta\in\mathbb E$ and then summing in $\Theta$ using the corresponding Bessel inequality.
\end{remark}

\begin{prop}[boxwise diagonal extreme bound]\label{prop:quadratic-extreme-boxwise-diagonal}
Let $\Theta\in\mathbb E_{\tau_0}$ be a nontransversal box and let $v_\Theta$ be its associated direction. Consider the wave packet decomposition of $f_\Theta$ given by Proposition~\ref{prop:standard-wave-packet-decomposition},
\[
f_\Theta
=
\sum_{T\in\mathbb T(\Theta)} c_T\phi_T.
\]
Define
\[
E_{v_\Theta}^{\mathrm{ext}}
:=
\{x\in S_Q: |n_Q(x)\cdot v_\Theta|\le \rho^{1/2}\}.
\]
Then
\[
\int_{E_{v_\Theta}^{\mathrm{ext}}}
|f_\Theta(x)|^2\,\mathrm d\mu_Q(x)
\le
C\rho^{-1/2}
\|f_\Theta\|_{L^2(\mathbb R^3)}^2,
\]
where $C$ depends only on $Q$, on the chart, on $\|\chi\|_{L^\infty}$, on the geometric constants of the tubular family, and on the structural constants of the wave packet decomposition in Proposition~\ref{prop:standard-wave-packet-decomposition}.
\end{prop}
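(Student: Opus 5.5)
The plan is to combine the structural wave packet decomposition with the Schur machinery already set up. By Proposition~\ref{prop:standard-wave-packet-decomposition} we write $f_\Theta=\sum_{T\in\mathbb T(\Theta)}c_T\phi_T$ with $\sum_T|c_T|^2\lesssim\|f_\Theta\|_{L^2(\mathbb R^3)}^2$. The packets $\phi_{\Theta,m,j}$ already satisfy the anisotropic decay bound of item~3, indexed by a transverse lattice of spacing $\rho$ in $v_\Theta^\perp$ and by the longitudinal index $j$, for every $N$. This is exactly the form required in Corollary~\ref{cor:quadratic-extreme-schur-parallel}, and the tubes are parallel to $v_\Theta$ by construction.

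Before invoking that corollary, I will dispose of two preliminary points. First, if $v_\Theta$ is not a relevant direction, then $h_{v_\Theta}$ does not vanish on $U_0$. In that case I either observe that $E^{\mathrm{ext}}_{v_\Theta}\cap\operatorname{supp}\mu_Q$ is empty for $\rho\le\rho_0$, or, more robustly, note that $|h_{v_\Theta}|\le\rho^{1/2}$ on $U_0$ forces $v_\Theta$ to lie within $O(\rho^{1/2})$ of a relevant direction $v$. Then $E^{\mathrm{ext}}_{v_\Theta}\subset\{|h_v|\le C\rho^{1/2}\}$, and Lemma~\ref{lem:elliptic-tubular-sublevel} continues to hold with $\rho^{1/2}$ replaced by $C\rho^{1/2}$, since its proof only uses the normal form of Lemma~\ref{lem:quadratic-uniform-normal-form}. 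I expect this perturbation to require only constant changes. Second, since $\operatorname{supp}\mu_Q\subset X(U_0)$, the extreme strip in the proposition coincides $\mu_Q$-a.e.\ with the one in Definition~\ref{def:quadratic-relevant-directions}.

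With these points settled, Corollary~\ref{cor:quadratic-extreme-schur-parallel}, applied with $v=v_\Theta$ and $N>10$, gives the two Schur bounds with constant $C\rho^{-1/2}$, uniform in $\Theta$. Proposition~\ref{prop:quadratic-extreme-family-schur} then yields
\[
\int_{E^{\mathrm{ext}}_{v_\Theta}}\Bigl|\sum_T c_T\phi_T\Bigr|^2\,\mathrm d\mu_Q\le C\rho^{-1/2}\sum_T|c_T|^2 .
\]

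The remaining issue is convergence. The expansion converges in $L^2(\mathbb R^3)$, not a priori in $L^2(\mu_Q)$, so I will apply the bound to finite partial sums. The Schur bound shows these partial sums are Cauchy in $L^2(E^{\mathrm{ext}}_{v_\Theta},\mu_Q)$. Since $f_\Theta$ is smooth and the partial sums also converge locally uniformly (the coefficients are bounded and the tails decay rapidly), the $\mu_Q$-limit is $f_\Theta$. Combining with the Bessel inequality completes the proof. I expect the main obstacle to be the non-relevant or near-relevant direction issue of the second paragraph, which is where uniformity in $\Theta$ must actually be checked.
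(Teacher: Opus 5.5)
Your proposal is correct and follows the paper's route. The packets of Proposition~\ref{prop:standard-wave-packet-decomposition} feed Corollary~\ref{cor:quadratic-extreme-schur-parallel}, which verifies the Schur hypotheses of Proposition~\ref{prop:quadratic-extreme-family-schur}, and the Bessel inequality concludes. Two of your additions are sound refinements the paper omits: item~3 already gives tubes parallel to $v_\Theta$, so no reparallelization is needed, and one should pass through finite partial sums.

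On the non-relevant case, the paper glosses over it by asserting that the uniform elliptic form applies whenever the strip is nonempty. Drop your first alternative: emptiness of the strip for $\rho\le\rho_0$ is not uniform in $\Theta$. In the second alternative, note that the sublevel bound of Lemma~\ref{lem:elliptic-tubular-sublevel} is needed for tubes parallel to $v_\Theta$ itself. This is because $|v-v_\Theta|$ may be of order $\rho^{1/2}\gg\rho$, which reparallelization cannot absorb. The bound does hold, since the $O(1)$ monotonicity from Lemma~\ref{lem:elliptic-tangency-curve-projection} survives replacing $\pi_v$ by $\pi_{v_\Theta}$ in the $h_v$-charts once $\rho$ is small.
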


\begin{proof}
If $E_{v_\Theta}^{\mathrm{ext}}=\varnothing$, there is nothing to prove. Otherwise, the direction $v_\Theta$ enters the extreme regime of the patch, and the uniform elliptic form of Subsection~\ref{subsec:elliptic-uniform-extreme-geometry} applies.

By Proposition~\ref{prop:standard-wave-packet-decomposition}, the directions of the tubes in $\mathbb T(\Theta)$ are contained in an angular cone of aperture $O(\rho)$ around $v_\Theta$. Using Lemma~\ref{lem:central-direction-tube-stability}, each tube can be replaced, at the cost of dilating the geometric constants and the tails by a uniform factor, by a tube parallel to $v_\Theta$. The spatial discretization of Proposition~\ref{prop:standard-wave-packet-decomposition} produces a complete net of indices $(m,j)\in\mathbb Z^2\times\mathbb Z$: the index $m$ parametrizes the transverse lattice of spacing comparable to $\rho$ in $v_\Theta^\perp$, and the index $j$ parametrizes the longitudinal windows of length comparable to $1$. In integrals over $\operatorname{supp}\mu_Q$, the longitudinal sum has uniformly bounded effective multiplicity, whereas the transverse sum has the two-dimensional growth of Lemma~\ref{lem:parallel-tube-tail-overlap}. Therefore, the resulting family satisfies the geometric hypothesis required for the Schur verification.

By Corollary~\ref{cor:quadratic-extreme-schur-parallel} and the uniform elliptic form of Subsection~\ref{subsec:elliptic-uniform-extreme-geometry}, the tubular family $\mathbb T(\Theta)$ satisfies the Schur hypotheses of Proposition~\ref{prop:quadratic-extreme-family-schur}, with constant uniform in $\Theta$ and in $R$.

Applying Proposition~\ref{prop:quadratic-extreme-family-schur} to the tubular family associated with $\Theta$, one obtains
\[
\int_{E_{v_\Theta}^{\mathrm{ext}}}
|f_\Theta(x)|^2\,\mathrm d\mu_Q(x)
\le
C\rho^{-1/2}
\sum_{T\in\mathbb T(\Theta)}|c_T|^2.
\]
The Bessel inequality of Proposition~\ref{prop:standard-wave-packet-decomposition} gives
\[
\sum_{T\in\mathbb T(\Theta)}|c_T|^2
\lesssim
\|f_\Theta\|_{L^2(\mathbb R^3)}^2.
\]
Combining the two estimates gives the claim.
\end{proof}

\begin{lem}[quadratic projective multiplicity outside the extreme strip]\label{lem:quadratic-nonextreme-projection-multiplicity}
Let
\[
S_Q
=
\{(u_1,u_2,Q(u_1,u_2)):u\in U\},
\qquad
Q(u_1,u_2)
=
\frac12(\lambda_1u_1^2+\lambda_2u_2^2),
\qquad
\lambda_1\lambda_2>0.
\]
Let $U_0\Subset U$ be a compact chart and set
\[
S_{Q,0}:=X(U_0),
\qquad
X(u_1,u_2)=(u_1,u_2,Q(u_1,u_2)).
\]
For $v\in S^2$, let
\[
\pi_v:\mathbb R^3\to v^\perp
\]
be the orthogonal projection and define
\[
E_v^{\mathrm{ext}}
:=
\{x\in S_{Q,0}: |n_Q(x)\cdot v|\le \rho^{1/2}\}.
\]
Then, for every $z\in v^\perp$,
\[
\#
\left(
(S_{Q,0}\setminus E_v^{\mathrm{ext}})
\cap
\pi_v^{-1}(\{z\})
\right)
\le 2.
\]
The bound is uniform in $v$, in $z$, in $\rho$, and in the compact chart $U_0$.
\end{lem}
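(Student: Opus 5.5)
The plan is to prove the bound directly from the convexity of $S_Q$, without using the restriction to $S_{Q,0}\setminus E_v^{\mathrm{ext}}$. Removing the extreme strip and restricting to $U_0$ only shrink the set being counted. So it suffices to show that every line parallel to $v$ meets the full graph $\{(u,Q(u)):u\in\mathbb R^2\}$ in at most two points. This immediately gives uniformity in $v$, $z$, $\rho$ and $U_0$.

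Fix $z\in v^\perp$. The fiber $\pi_v^{-1}(\{z\})$ is the line $\{z+tv:t\in\mathbb R\}$. Write $v=(v',v_3)$ with $v'\in\mathbb R^2$, and $z=(z',z_3)$. A point $z+tv$ lies on the graph exactly when
\[
z_3+tv_3=Q(z'+tv').
\]
Since $Q$ is a quadratic form, this becomes
\[
Q(v')\,t^2+\bigl(B(z',v')-v_3\bigr)t+\bigl(Q(z')-z_3\bigr)=0,
\]
where $B(z',v')=\lambda_1z_1'v_1'+\lambda_2z_2'v_2'$ is the polarization of $Q$. Because the parametrization $t\mapsto z+tv$ is injective, the number of intersection points equals the number of real roots $t$ of this equation.

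I would then split into two cases.

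\emph{Case $v'\neq0$.} Since $\lambda_1\lambda_2>0$, the form $Q$ is definite, so $Q(v')=\tfrac12(\lambda_1v_1'^2+\lambda_2v_2'^2)\neq0$. The equation is then a genuine quadratic in $t$ and has at most two real roots.

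\emph{Case $v'=0$.} Then $v=\pm e_3$ and the line is vertical. It meets a graph over the $(u_1,u_2)$-plane in at most one point, namely $u=z'$.

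In both cases the count is at most $2$. The ellipticity hypothesis is exactly what rules out the degenerate situation in which the equation is identically zero, which happens in the hyperbolic case when $v'$ is an asymptotic direction and the line is a ruling contained in the surface.

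I do not expect any real obstacle; the only point requiring care is the observation that the leading coefficient $Q(v')$ cannot vanish for $v'\neq 0$. Finally, I would note that the fiber count is pointwise for every $z$, which is stronger than the almost-every statement needed later in Lemma~\ref{lem:projective-area-formula-bound}, where this lemma presumably supplies the multiplicity bound $N_0=2$.
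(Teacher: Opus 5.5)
Your proof is correct and follows the paper's argument. You reduce membership of $z+tv$ in the graph to a polynomial of degree at most two in $t$, use definiteness of $Q$ to see that the leading coefficient $Q(v')$ vanishes only when $v'=0$, and treat that vertical case separately; the paper phrases this last step as the linear coefficient $-v_3\neq 0$, whereas you observe that a vertical line meets a graph once, and in both versions restricting to $S_{Q,0}\setminus E_v^{\mathrm{ext}}$ only lowers the count.
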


\begin{proof}
Fix $v\in S^2$ and $z\in v^\perp$. The fiber of $\pi_v$ over $z$ is the line
\[
\ell_{z,v}:=\{z+tv:t\in\mathbb R\}.
\]
The points of $S_Q\cap \ell_{z,v}$ correspond to solutions of
\[
z+tv=(u_1,u_2,Q(u_1,u_2)).
\]
Writing $z=(z_1,z_2,z_3)$ and $v=(v_1,v_2,v_3)$, the first two coordinates impose
\[
u_1=z_1+tv_1,
\qquad
u_2=z_2+tv_2.
\]
The condition of belonging to $S_Q$ is then reduced to
\[
z_3+tv_3
=
Q(z_1+tv_1,z_2+tv_2),
\]
that is,
\[
Q(z_1+tv_1,z_2+tv_2)-z_3-tv_3=0.
\]
The left-hand side is a polynomial of degree at most $2$ in $t$.

We rule out that this polynomial is identically zero. Indeed, its quadratic coefficient is
\[
\frac12(\lambda_1v_1^2+\lambda_2v_2^2).
\]
Since $\lambda_1\lambda_2>0$, the quadratic form $\lambda_1a_1^2+\lambda_2a_2^2$ is definite. If the quadratic coefficient vanishes, then $v_1=v_2=0$. Since $v\in S^2$, one has $v_3=\pm1$, and the linear coefficient of the polynomial contains the term $-v_3t$, so it cannot vanish identically either.

Thus the line $\ell_{z,v}$ intersects $S_Q$ in at most two points. Restricting to $S_{Q,0}\setminus E_v^{\mathrm{ext}}$ can only decrease the cardinality. This proves the claim.
\end{proof}

\begin{prop}[degenerate projective bound outside the extreme strip]\label{prop:quadratic-nonextreme-projective-test}
Let
\[
S_Q
=
\{(u_1,u_2,Q(u_1,u_2)):u\in U\},
\qquad
Q(u_1,u_2)
=
\frac12(\lambda_1u_1^2+\lambda_2u_2^2),
\qquad
\lambda_1\lambda_2>0.
\]
Let
\[
\mu_Q=\chi\,\mathcal H^2\lfloor S_Q,
\]
where $\chi\in L^\infty(S_Q)$ is compactly supported in $S_{Q,0}=X(U_0)$, with $U_0\Subset U$. For $v\in S^2$, define
\[
E_v^{\mathrm{ext}}
:=
\{x\in S_{Q,0}: |n_Q(x)\cdot v|\le \rho^{1/2}\}.
\]
Then there exists a constant $C\ge1$, depending only on $Q$, on $U_0$, and on $\|\chi\|_{L^\infty}$, such that for every $v\in S^2$, every $z\in v^\perp$, and every $0<r\le1$,
\[
(\pi_v)_\#
\bigl(
\mu_Q\lfloor(S_{Q,0}\setminus E_v^{\mathrm{ext}})
\bigr)
\bigl(B_{v^\perp}(z,r)\bigr)
\le
C\rho^{-1/2}r^2.
\]
\end{prop}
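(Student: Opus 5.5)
The plan is to repeat the argument of Lemma~\ref{lem:projective-area-formula-bound}, with the restricted measure $\mu_Q\lfloor(S_{Q,0}\setminus E_v^{\mathrm{ext}})$ in place of $\mu$. The lower Jacobian bound $c_0$ is replaced by the degenerate bound $\rho^{1/2}$ that holds off the extreme strip, and the multiplicity bound is taken from Lemma~\ref{lem:quadratic-nonextreme-projection-multiplicity}.

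Fix $v\in S^2$, $z\in v^\perp$ and $0<r\le1$, and set $B=B_{v^\perp}(z,r)$ and $F:=(S_{Q,0}\setminus E_v^{\mathrm{ext}})\cap\pi_v^{-1}(B)$. By the definition of pushforward and $0\le\chi\le\|\chi\|_{L^\infty}$,
\[
(\pi_v)_\#\bigl(\mu_Q\lfloor(S_{Q,0}\setminus E_v^{\mathrm{ext}})\bigr)(B)\le\|\chi\|_{L^\infty}\,\mathcal H^2(F).
\]
On $F$ the tangential Jacobian is $J_{S_Q}\pi_v(x)=|n_Q(x)\cdot v|>\rho^{1/2}$, directly from the definition of $E_v^{\mathrm{ext}}$. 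Hence $\mathbf 1_F\le\rho^{-1/2}J_{S_Q}\pi_v\,\mathbf 1_F$, and
\[
\mathcal H^2(F)\le\rho^{-1/2}\int_F J_{S_Q}\pi_v\,\mathrm d\mathcal H^2.
\]
The area formula for the Lipschitz map $\pi_v|_{S_{Q,0}\setminus E_v^{\mathrm{ext}}}$ rewrites the last integral as
\[
\int_B N\bigl(y,\pi_v,S_{Q,0}\setminus E_v^{\mathrm{ext}}\bigr)\,\mathrm d\mathcal H^2_{v^\perp}(y).
\]
The set $S_{Q,0}\setminus E_v^{\mathrm{ext}}$ is a Borel subset of the smooth graph, so the area formula applies after restriction. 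By Lemma~\ref{lem:quadratic-nonextreme-projection-multiplicity}, the multiplicity is at most $2$ for every $y$. The integral is therefore at most $2\pi r^2$.

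Combining the displays gives the bound $2\pi\|\chi\|_{L^\infty}\rho^{-1/2}r^2$. The constant depends only on $\|\chi\|_{L^\infty}$ and is uniform in $v$, $z$, $r$ and $\rho$. The smoothness of $X$ on $U_0$ enters only qualitatively, to justify the area formula, so the dependence on $Q$ and $U_0$ is at most harmless.

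No step is a genuine obstacle. The only point needing care is that the Jacobian bound is used only on the complement of the strip, where it holds by definition, and not at the tangency curve, where it fails. The multiplicity input is exactly the at-most-two-intersection property of lines with the elliptic graph, which was proved uniformly in the preceding lemma. The restriction $r\le1$ is not actually needed for this argument.
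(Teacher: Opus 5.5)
Your proposal is correct and follows the paper's proof essentially step for step: you bound the pushforward by $\|\chi\|_{L^\infty}$ times the $\mathcal H^2$-measure of the preimage, insert the degenerate Jacobian bound $1\le\rho^{-1/2}J_{S_Q}\pi_v$ off the extreme strip, apply the area formula, and invoke the at-most-two multiplicity of Lemma~\ref{lem:quadratic-nonextreme-projection-multiplicity}. Your side remarks are also accurate: the constant can be taken to be $2\pi\|\chi\|_{L^\infty}$, and the restriction $r\le1$ is not needed.
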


\begin{proof}
Let $A:=B_{v^\perp}(z,r)$. Since $\chi$ is supported in $S_{Q,0}$,
\[
(\pi_v)_\#
\bigl(
\mu_Q\lfloor(S_{Q,0}\setminus E_v^{\mathrm{ext}})
\bigr)(A)
\le
\|\chi\|_{L^\infty}
\mathcal H^2
\bigl((S_{Q,0}\setminus E_v^{\mathrm{ext}})\cap \pi_v^{-1}(A)\bigr).
\]

The tangential Jacobian of the orthogonal projection
\[
\pi_v|_{S_Q}:S_Q\to v^\perp
\]
satisfies
\[
J_{S_Q}\pi_v(x)=|n_Q(x)\cdot v|.
\]
On $S_{Q,0}\setminus E_v^{\mathrm{ext}}$ one has
\[
|n_Q(x)\cdot v|>\rho^{1/2}.
\]
Therefore,
\[
1\le \rho^{-1/2}J_{S_Q}\pi_v(x)
\qquad
\text{on }S_{Q,0}\setminus E_v^{\mathrm{ext}}.
\]
Hence
\[
\mathcal H^2
\bigl((S_{Q,0}\setminus E_v^{\mathrm{ext}})\cap \pi_v^{-1}(A)\bigr)
\le
\rho^{-1/2}
\int_{(S_{Q,0}\setminus E_v^{\mathrm{ext}})\cap \pi_v^{-1}(A)}
J_{S_Q}\pi_v(x)\,\mathrm d\mathcal H^2(x).
\]

Applying the area formula in the notation fixed before Lemma~\ref{lem:projective-area-formula-bound} to
$\pi_v|_{S_Q}$,
\[
\int_{(S_{Q,0}\setminus E_v^{\mathrm{ext}})\cap \pi_v^{-1}(A)}
J_{S_Q}\pi_v(x)\,\mathrm d\mathcal H^2(x)
=
\int_A
N(y,\pi_v,S_{Q,0}\setminus E_v^{\mathrm{ext}})
\,\mathrm d\mathcal H^2_{v^\perp}(y).
\]
By Lemma~\ref{lem:quadratic-nonextreme-projection-multiplicity},
\[
N(y,\pi_v,S_{Q,0}\setminus E_v^{\mathrm{ext}})
\le 2
\]
for every $y\in v^\perp$. Thus
\[
\int_A
N(y,\pi_v,S_{Q,0}\setminus E_v^{\mathrm{ext}})
\,\mathrm d\mathcal H^2_{v^\perp}(y)
\le
2\,\mathcal H^2_{v^\perp}(A)
\lesssim r^2.
\]
Combining the preceding inequalities gives
\[
(\pi_v)_\#
\bigl(
\mu_Q\lfloor(S_{Q,0}\setminus E_v^{\mathrm{ext}})
\bigr)
\bigl(B_{v^\perp}(z,r)\bigr)
\le
C\rho^{-1/2}r^2.
\]
\end{proof}

\begin{remark}[intermediate regime and degenerate projective cost]\label{rem:quadratic-intermediate-projected-cost}
The complement of the extreme strip
\[
S_Q\setminus E_{v_\Theta}^{\mathrm{ext}}
=
\{x\in S_Q: |n_Q(x)\cdot v_\Theta|>\rho^{1/2}\}
\]
contains, in particular, the intermediate regime
\[
\rho^{1/2}<|n_Q(x)\cdot v_\Theta|<\tau_0.
\]
This region is not treated by a uniform transversal bound. The argument uses only the degenerate lower bound
\[
|n_Q(x)\cdot v_\Theta|>\rho^{1/2},
\]
which produces, through the area formula for the projection $\pi_{v_\Theta}$ and the uniform projective multiplicity of the elliptic model, the projective cost
\[
C_{\mathrm{proj}}\lesssim \rho^{-1/2}.
\]
Thus the intermediate regime is included in the non-extreme block with the same admissible quadratic cost $\rho^{-1/2}$ as the extreme strip. No uniform improvement is asserted in that regime.
\end{remark}

\begin{prop}[projective bound outside the extreme strip]\label{prop:quadratic-nonextreme-projected-cost}
Let $S_Q$, $\mu_Q$, and $\rho$ be as in Section~\ref{sec:bad-geometry}. Fix a frequency box $\Theta$ and let $v_\Theta\in S^2$ be its associated physical direction. Define
\[
E_{v_\Theta}^{\mathrm{ext}}
:=
\{x\in S_Q: |n_Q(x)\cdot v_\Theta|\le \rho^{1/2}\}.
\]
Define the restricted measure
\[
\mu_{Q,\Theta}^{\mathrm{nex}}
:=
\mu_Q\lfloor (S_Q\setminus E_{v_\Theta}^{\mathrm{ext}}).
\]
Consider the wave packet decomposition of $f_\Theta$ given by Proposition~\ref{prop:standard-wave-packet-decomposition}. Then there exists a constant $C\ge 1$, depending only on $Q$, on the chart, on $\|\chi\|_{L^\infty}$, and on the structural constants of the wave packet decomposition, such that
\[
\int_{S_Q\setminus E_{v_\Theta}^{\mathrm{ext}}}
|f_\Theta(x)|^2\,\mathrm d\mu_Q(x)
\le
C\rho^{-1/2}
\|f_\Theta\|_{L^2(\mathbb R^3)}^2.
\]
\end{prop}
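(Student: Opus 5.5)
The plan is to feed the degenerate projective bound of Proposition~\ref{prop:quadratic-nonextreme-projective-test} into the abstract diagonal mechanism of Proposition~\ref{prop:diagonal-under-projection}, applied to one box $\Theta$ with $\beta=2$ and $C_{\mathrm{proj}}\lesssim\rho^{-1/2}$.

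First, set $\mu:=\mu_{Q,\Theta}^{\mathrm{nex}}$. This is a positive Borel measure. After a harmless translation and dilation by a fixed factor depending only on $U_0$, it is supported in $B(0,1)$. This normalization only changes the geometric constants, since $\operatorname{supp}\chi\subset X(U_0)$ is a fixed compact set. Proposition~\ref{prop:quadratic-nonextreme-projective-test} with $v=v_\Theta$ then gives
\[
(\pi_{v_\Theta})_\#\mu\bigl(B_{v_\Theta^\perp}(z,r)\bigr)\le C\rho^{-1/2}r^2
\]
for every $z\in v_\Theta^\perp$ and every $0<r\le1$. The constant is uniform in $\Theta$, $z$ and $\rho$. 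So $(P_\beta)$ holds with $\beta=2$ and $C_{\mathrm{proj}}=C\rho^{-1/2}$.

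Second, take the decomposition $f_\Theta=\sum_{T\in\mathbb T(\Theta)}c_T\phi_T$ from Proposition~\ref{prop:standard-wave-packet-decomposition}.
\begin{itemize}
\item The packets satisfy $\|\phi_T\|_{L^2}\le C$ and the anisotropic decay bound with $\rho^{-1}$ amplitude. Dividing by a fixed constant gives the normalization $\|\phi_T\|_{L^2}\le1$ with $A\sim1$.
\item By Remark~\ref{rem:central-direction-parallelization} and Lemma~\ref{lem:central-direction-tube-stability}, the tubes may be taken parallel to $v_\Theta$, at the cost of dilating the constants and the tails.
\item The anisotropic tail $(1+|\pi x-\rho m|/\rho+|x\cdot v-j|)^{-N}$ is dominated by $(1+\operatorname{dist}(x,T)/\rho)^{-N}$, up to constants, for the tube $T=T_{\Theta,m,j}$. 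This holds because being longitudinally far from the window costs only a factor comparable to $1$ after rescaling, and the longitudinal term only helps.
\item Lemma~\ref{lem:dyadic-overlap-standard} gives the dyadic overlap hypothesis $(O_a)$ with $a=2$.
\end{itemize}

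Third, choose $N>(\beta+a)/2=2$; the decay holds for every $N$. Apply Proposition~\ref{prop:diagonal-under-projection} to the single box $\Theta$. Its proof is box-by-box, so the restriction to one $\Theta$ is legitimate. This yields
\[
\int|f_\Theta|^2\,\mathrm d\mu\le C\,M\,A^2C_{\mathrm{proj}}\,\rho^{0}\sum_T|c_T|^2\lesssim\rho^{-1/2}\sum_T|c_T|^2.
\]
The Bessel inequality then gives $\sum_T|c_T|^2\lesssim\|f_\Theta\|_{L^2}^2$, which is the claim.

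Two technical points need care, and I expect both to be manageable.

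\emph{Finiteness of the tube family.} The abstract proposition assumes $\mathbb T(\Theta)$ is finite, while the decomposition is an $L^2$-convergent series. I would truncate to a finite set of tubes, apply the bound uniformly, and pass to the limit. The limit is justified because convergence in $L^2(\mathbb R^3)$ of band-limited functions with uniformly bounded frequency support implies locally uniform convergence, so $\int|f_\Theta|^2\,\mathrm d\mu$ converges.

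\emph{Support normalization.} The abstract result uses $\operatorname{supp}\mu\subset B(0,1)$, and it also uses $\mu(B(0,1))\le C_{\mathrm{proj}}$. The latter follows from the projective bound at $r=1$. Alternatively, the large-$k$ range is bounded directly by $\mu(\mathbb R^3)\lesssim1\le\rho^{-1/2}$, so either route is acceptable.
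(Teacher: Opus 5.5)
Your route is the paper's: Proposition~\ref{prop:quadratic-nonextreme-projective-test} gives $(P_2)$ for $\mu_{Q,\Theta}^{\mathrm{nex}}$ with $C_{\mathrm{proj}}\lesssim\rho^{-1/2}$, Lemma~\ref{lem:dyadic-overlap-standard} gives $(O_2)$, Proposition~\ref{prop:diagonal-under-projection} with $\beta=2$ yields $\rho^{-1/2}\sum_T|c_T|^2$, and Bessel concludes. Your truncation argument (uniform convergence of band-limited partial sums) is fine, and so is bounding the large-$k$ range by the total mass $\mu(\mathbb R^3)\lesssim1$. Parallelization is unnecessary, since the tubes $T_{\Theta,m,j}$ are already exactly parallel to $v_\Theta$.

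\textbf{The gap is your third bullet.} Write $\rho m$ for $\rho m_1e_{\Theta,1}+\rho m_2e_{\Theta,2}$. The anisotropic weight
\[
w_{m,j}(x)=\Bigl(1+\frac{|\pi_{v_\Theta}x-\rho m|}{\rho}+|x\cdot v_\Theta-j|\Bigr)^{-N}
\]
is \emph{not} dominated by $(1+\operatorname{dist}(x,T)/\rho)^{-N}$ uniformly in $\rho$. Take the point on the axis of $T=T_{\Theta,m,j}$ with longitudinal coordinate $j+C+1$. There $w_{m,j}=(C+2)^{-N}$, whereas $\operatorname{dist}(x,T)=1$ gives $(1+\rho^{-1})^{-N}\sim\rho^N$.

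Item~3 of Proposition~\ref{prop:standard-wave-packet-decomposition} decays longitudinally at unit scale, not at scale $\rho$, so the isotropic tail is the smaller one and the comparison goes the other way. Better packets cannot fix this. With longitudinal bandwidth $O(1)$, a normalized packet cannot be $O(\rho^N)$ at unit distance beyond the end of its tube with constants uniform in $\rho$.

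Hence the decay hypothesis of Proposition~\ref{prop:diagonal-under-projection} is not verified. The paper applies that proposition to these packets without comment, so it passes over the same mismatch. Your explicit justification is what is wrong.

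\textbf{The repair uses only what item~3 actually gives.} By Cauchy--Schwarz and Lemma~\ref{lem:parallel-tube-tail-overlap} (take $N>4$),
\[
|f_\Theta(x)|^2\le C_N^2\rho^{-2}\Bigl(\sum_T|c_T|^2w_T(x)\Bigr)\Bigl(\sum_T w_T(x)\Bigr)\le C\rho^{-2}\sum_T|c_T|^2w_T(x).
\]
Next, drop the longitudinal term, so that $w_T(x)\le(1+|\pi_{v_\Theta}x-\rho m|/\rho)^{-N}$. Push forward to $v_\Theta^\perp$ and decompose dyadically around $\rho m$. Radii $2^k\rho\le1$ are controlled by $(P_2)$ with $C_{\mathrm{proj}}\lesssim\rho^{-1/2}$, and larger radii by the total mass. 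This gives
\[
\int w_T\,\mathrm d\mu_{Q,\Theta}^{\mathrm{nex}}\lesssim C_{\mathrm{proj}}\rho^{2}+\rho^{N}\lesssim\rho^{3/2}.
\]
Therefore $\int|f_\Theta|^2\,\mathrm d\mu_{Q,\Theta}^{\mathrm{nex}}\lesssim\rho^{-1/2}\sum_T|c_T|^2$, and Bessel finishes.

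Equivalently, you can rerun the dyadic proof of Proposition~\ref{prop:diagonal-under-projection} with anisotropic shells $\{|\pi_{v_\Theta}x-\rho m|\lesssim2^k\rho,\ |x\cdot v_\Theta-j|\lesssim2^k\}$. Their pointwise overlap is $O(2^{3k})$ rather than $O(2^{2k})$, but since the decay holds for every $N$, the series still converge.
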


\begin{proof}
By Proposition~\ref{prop:quadratic-nonextreme-projective-test}, applied with $v=v_\Theta$, one has
\[
(\pi_{v_\Theta})_\#\mu_{Q,\Theta}^{\mathrm{nex}}
\bigl(B_{v_\Theta^\perp}(z,r)\bigr)
\le
C\rho^{-1/2}r^2
\]
for every $z\in v_\Theta^\perp$ and every $0<r\le1$. That is, the restricted measure $\mu_{Q,\Theta}^{\mathrm{nex}}$ satisfies the projective hypothesis $(P_2)$ with constant
\[
C_{\mathrm{proj}}\lesssim \rho^{-1/2}.
\]

The tubular family $\mathbb T(\Theta)$ associated with the standard decomposition satisfies the dyadic overlap hypothesis $(O_2)$, by Lemma~\ref{lem:dyadic-overlap-standard}, with constants independent of $\Theta$ and $R$. Moreover, since $\chi$ is compactly supported in the fixed chart, the measure $\mu_{Q,\Theta}^{\mathrm{nex}}$ is supported in a fixed compact set; after a fixed normalization of the chart, it lies in the support regime required by Proposition~\ref{prop:diagonal-under-projection}.

Applying Proposition~\ref{prop:diagonal-under-projection} to the measure $\mu_{Q,\Theta}^{\mathrm{nex}}$, with $\beta=2$, with projective constant $C_{\mathrm{proj}}\lesssim\rho^{-1/2}$, and with the standard decomposition of Proposition~\ref{prop:standard-wave-packet-decomposition}, gives
\[
\int_{S_Q\setminus E_{v_\Theta}^{\mathrm{ext}}}
|f_\Theta(x)|^2\,\mathrm d\mu_Q(x)
\le
C\rho^{-1/2}
\sum_{T\in\mathbb T(\Theta)}|c_T|^2.
\]
The Bessel inequality of Proposition~\ref{prop:standard-wave-packet-decomposition} implies
\[
\sum_{T\in\mathbb T(\Theta)}|c_T|^2
\lesssim
\|f_\Theta\|_{L^2(\mathbb R^3)}^2.
\]
This concludes the proof.
\end{proof}

\begin{prop}[total bound per nontransversal box]\label{prop:quadratic-nontransversal-box-cost}
Let $\Theta\in\mathbb E_{\tau_0}$ and let $v_\Theta$ be its associated direction. Consider the wave packet decomposition of $f_\Theta$ given by Proposition~\ref{prop:standard-wave-packet-decomposition},
\[
f_\Theta
=
\sum_{T\in\mathbb T(\Theta)} c_T\phi_T.
\]
Define
\[
E_{v_\Theta}^{\mathrm{ext}}
:=
\{x\in S_Q: |n_Q(x)\cdot v_\Theta|\le \rho^{1/2}\}.
\]
Then
\[
\int_{S_Q}
|f_\Theta(x)|^2\,\mathrm d\mu_Q(x)
\le
C\rho^{-1/2}
\|f_\Theta\|_{L^2(\mathbb R^3)}^2,
\]
where $C$ depends only on $Q$, on the chart, on $\|\chi\|_{L^\infty}$, on the geometric constants of the tubular family, and on the structural constants of the wave packet decomposition in Proposition~\ref{prop:standard-wave-packet-decomposition}.
\end{prop}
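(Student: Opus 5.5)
The plan is to split the integral over $S_Q$ into the extreme strip and its complement, and apply the two bounds already established for each piece. Since $\mu_Q$ is supported in $S_{Q,0}$,
\[
\int_{S_Q}|f_\Theta|^2\,\mathrm d\mu_Q
=
\int_{E_{v_\Theta}^{\mathrm{ext}}}|f_\Theta|^2\,\mathrm d\mu_Q
+
\int_{S_Q\setminus E_{v_\Theta}^{\mathrm{ext}}}|f_\Theta|^2\,\mathrm d\mu_Q .
\]
The two sets are disjoint and measurable, because $x\mapsto |n_Q(x)\cdot v_\Theta|$ is continuous. So the identity is just additivity of the integral.

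For the first term I will invoke Proposition~\ref{prop:quadratic-extreme-boxwise-diagonal}. It gives the bound $C\rho^{-1/2}\|f_\Theta\|_{L^2}^2$, and it covers the case $E_{v_\Theta}^{\mathrm{ext}}=\varnothing$ trivially. When the strip is nonempty, the direction $v_\Theta$ is relevant up to the $\rho^{1/2}$ slack. I will check that the uniform normal form of Lemma~\ref{lem:quadratic-uniform-normal-form} still applies here. Lemma~\ref{lem:quadratic-uniform-normal-form} requires $Z_v\neq\varnothing$. If only $|h_v|\le\rho^{1/2}$ is attained, then perturbing $v$ by $O(\rho^{1/2})$ produces a relevant direction, and the charts are stable under this perturbation for $\rho\le\rho_0$. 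Alternatively, one can note that $|\nabla h_v|\gtrsim 1$ holds on a neighbourhood of $\mathcal K_0$, because this lower bound is an open condition. This is the one point I would double-check, but it is absorbed into the proof of that proposition, which I am allowed to assume.

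For the second term I will invoke Proposition~\ref{prop:quadratic-nonextreme-projected-cost}. It gives the same form of bound, $C\rho^{-1/2}\|f_\Theta\|_{L^2}^2$, valid for every $\Theta$ and in particular for $\Theta\in\mathbb E_{\tau_0}$. Behind it are the degenerate projective estimate of Proposition~\ref{prop:quadratic-nonextreme-projective-test}, the overlap Lemma~\ref{lem:dyadic-overlap-standard}, Proposition~\ref{prop:diagonal-under-projection} with $\beta=2$, and Bessel.

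Summing the two bounds gives the claim, with constant equal to the sum of the two constants. Both constants depend only on $Q$, the chart, $\|\chi\|_{L^\infty}$, and the structural constants, and neither depends on $\Theta$ or $R$. No cross terms arise, because the splitting is of the integration domain, not of $f_\Theta$.

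The only substantive obstacle is the one already noted: ensuring that the extreme-strip proposition applies uniformly to every nontransversal box, including directions that are nearly but not exactly tangent. Everything else is bookkeeping.
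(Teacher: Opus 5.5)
Your proposal is correct and is exactly the paper's argument: split $S_Q$ into $E_{v_\Theta}^{\mathrm{ext}}$ and its complement, bound the first piece by Proposition~\ref{prop:quadratic-extreme-boxwise-diagonal} and the second by Proposition~\ref{prop:quadratic-nonextreme-projected-cost}, and add the two estimates. Your side remark about directions with $E_{v_\Theta}^{\mathrm{ext}}\neq\varnothing$ but $Z_{v_\Theta}=\varnothing$ is a fair point. It concerns the proof of the extreme-strip proposition itself, which the paper does not address explicitly, but it does not affect this assembly step.
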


\begin{proof}
Decompose
\[
S_Q
=
E_{v_\Theta}^{\mathrm{ext}}
\cup
(S_Q\setminus E_{v_\Theta}^{\mathrm{ext}}).
\]
By Proposition~\ref{prop:quadratic-extreme-boxwise-diagonal},
\[
\int_{E_{v_\Theta}^{\mathrm{ext}}}
|f_\Theta(x)|^2\,\mathrm d\mu_Q(x)
\le
C\rho^{-1/2}
\|f_\Theta\|_{L^2(\mathbb R^3)}^2.
\]
By Proposition~\ref{prop:quadratic-nonextreme-projected-cost},
\[
\int_{S_Q\setminus E_{v_\Theta}^{\mathrm{ext}}}
|f_\Theta(x)|^2\,\mathrm d\mu_Q(x)
\le
C\rho^{-1/2}
\|f_\Theta\|_{L^2(\mathbb R^3)}^2.
\]
Summing the two estimates gives the claim.
\end{proof}

\begin{thm}[diagonal bound with optimal loss]\label{thm:quadratic-diagonal-sharp}
Let
\[
\mu_Q=\chi\,\mathcal H^2\lfloor S_Q
\]
be as above, with $\lambda_1\lambda_2>0$, and let $\rho=R^{-1/2}$. Fix $\tau_0\in(0,1)$ and consider the transversal--nontransversal partition
\[
\{\Theta\}
=
\mathbb T_{\tau_0}\,\dot\cup\,\mathbb E_{\tau_0}
\]
from Definition~\ref{def:quadratic-transversal-extreme-partition}. Then there exists a constant $C\ge1$, depending on $\tau_0$, on $Q$, on the chart, on $\|\chi\|_{L^\infty}$, and on the structural constants of the wave packet decomposition, but independent of $R$, such that
\[
\int_{\mathbb R^3}
\sum_\Theta |f_\Theta(x)|^2\,\mathrm d\mu_Q(x)
\le
C\rho^{-1/2}
\sum_\Theta
\|f_\Theta\|_{L^2(\mathbb R^3)}^2.
\]
Consequently,
\[
\|\mathcal G_R f\|_{L^2(\mathrm d\mu_Q)}
\le
C\rho^{-1/4}
\left(
\sum_\Theta
\|f_\Theta\|_{L^2(\mathbb R^3)}^2
\right)^{1/2}.
\]
By almost orthogonality of the frequency partition,
\[
\|\mathcal G_R f\|_{L^2(\mathrm d\mu_Q)}
\le
C\rho^{-1/4}
\|f\|_{L^2(\mathbb R^3)}
=
C R^{1/8}
\|f\|_{L^2(\mathbb R^3)}.
\]
\end{thm}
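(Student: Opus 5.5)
The plan is to split the family of boxes using Definition~\ref{def:quadratic-transversal-extreme-partition} into $\mathbb T_{\tau_0}$ and $\mathbb E_{\tau_0}$. We bound each piece separately in the diagonal quadratic form, and then combine them with Proposition~\ref{prop:quadratic-two-piece-assembly}. This gives $\max\{A,B\}$ with $A\lesssim 1$ and $B\lesssim\rho^{-1/2}$, so the total cost is $\lesssim\rho^{-1/2}$, exactly as in Proposition~\ref{prop:quadratic-geometric-cost-assembly}.

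\emph{Transversal boxes.} For $\Theta\in\mathbb T_{\tau_0}$ we have $|n_Q\cdot v_\Theta|\ge\tau_0$ on $\operatorname{supp}\chi$. Lemma~\ref{lem:quadratic-nonextreme-projection-multiplicity} shows that every line meets $S_Q$ in at most two points. Hence Lemma~\ref{lem:surface-projection-transverse}, applied to $\mu_Q$ restricted to the compact chart, gives the projective hypothesis $(P_2)$ with a constant depending only on $\tau_0$, $Q$, $U_0$ and $\|\chi\|_\infty$, uniformly in $\Theta$. Next we parallelize the tubes of $\mathbb T(\Theta)$ to $v_\Theta$ via Lemma~\ref{lem:central-direction-tube-stability}. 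Lemma~\ref{lem:dyadic-overlap-standard} then gives $(O_2)$, and Proposition~\ref{prop:standard-wave-packet-decomposition} supplies decay of every order $N$. With these inputs Proposition~\ref{prop:diagonal-under-projection}, with $\beta=2$ and $N>2$, yields $\int|f_\Theta|^2\,d\mu_Q\lesssim\sum_T|c_T|^2$. The Bessel inequality converts this into $\lesssim\|f_\Theta\|_2^2$. A harmless rescaling puts $\operatorname{supp}\mu_Q$ inside $B(0,1)$, or alternatively we use Corollary~\ref{cor:wave-packet-projection-frostman}'s local version.

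\emph{Nontransversal boxes.} For $\Theta\in\mathbb E_{\tau_0}$, Proposition~\ref{prop:quadratic-nontransversal-box-cost} gives $\int|f_\Theta|^2\,d\mu_Q\le C\rho^{-1/2}\|f_\Theta\|_2^2$. It combines the Schur bound on the extreme strip (Propositions~\ref{prop:quadratic-extreme-family-schur} and \ref{prop:quadratic-extreme-boxwise-diagonal}, with Corollary~\ref{cor:quadratic-extreme-schur-parallel}) and the degenerate projective bound off the strip (Proposition~\ref{prop:quadratic-nonextreme-projected-cost}). One point must be handled here. If $v_\Theta$ is not a relevant direction, then $E^{\mathrm{ext}}_{v_\Theta}$ may still be nonempty when $|n_Q\cdot v_\Theta|\le\rho^{1/2}$ somewhere without vanishing. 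In that case I would either enlarge $U_0$ slightly, so that a zero exists in a fixed larger chart, or note that the uniform normal form of Lemma~\ref{lem:quadratic-uniform-normal-form} persists for $\rho\le\rho_0$ by compactness. Summing over $\Theta\in\mathbb E_{\tau_0}$ then gives $B\lesssim\rho^{-1/2}$.

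\emph{Assembly.} After the assembly we take square roots, since $\|\mathcal G_Rf\|_{L^2(\mu_Q)}^2$ is precisely the diagonal integral. The final step uses $\sum_\Theta\|f_\Theta\|_2^2=\int\sum_\Theta\psi_\Theta^2|\widehat f|^2\le\int|\widehat f|^2$, which follows from Plancherel and $0\le\psi_\Theta\le1$ with $\sum_\Theta\psi_\Theta=1$, hence $\sum_\Theta\psi_\Theta^2\le 1$ on $\Sigma_R$. I expect the main obstacle to be the uniformity bookkeeping, not any single estimate. The constants must be independent of $\Theta$ and $R$ in both regimes. The extreme Schur argument also requires the tail-mass bound of Lemma~\ref{lem:elliptic-tubular-sublevel} uniformly over all directions that meet the $\rho^{1/2}$-strip, including the near-relevant ones just discussed.
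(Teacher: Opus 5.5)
Your proposal is correct and follows the paper's proof. Transversal boxes go through Lemma~\ref{lem:surface-projection-transverse} and Proposition~\ref{prop:diagonal-under-projection} with $\beta=2$, nontransversal boxes through Proposition~\ref{prop:quadratic-nontransversal-box-cost}, the pieces are combined by Proposition~\ref{prop:quadratic-two-piece-assembly}, and almost orthogonality finishes. Your extra checks are points the paper leaves implicit, and you handle them correctly: the two-point line multiplicity needed to invoke Lemma~\ref{lem:surface-projection-transverse}, the near-relevant directions with $E^{\mathrm{ext}}_{v_\Theta}\neq\varnothing$ but $Z_{v_\Theta}=\varnothing$, and $\sum_\Theta\psi_\Theta^2\le\sum_\Theta\psi_\Theta=1$.
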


\begin{proof}
First consider the transversal subfamily $\mathbb T_{\tau_0}$. By the definition of $\mathbb T_{\tau_0}$,
\[
|n_Q(x)\cdot v_\Theta|\ge \tau_0
\qquad
\text{for every }x\in \operatorname{supp}\chi
\]
and every $\Theta\in\mathbb T_{\tau_0}$. By Lemma~\ref{lem:surface-projection-transverse}, applied in the compact quadratic chart and using the lower bound $|n_Q(x)\cdot v_\Theta|\ge\tau_0$, the projected measure $(\pi_{v_\Theta})_\#\mu_Q$ satisfies the hypothesis $(P_2)$ with constant depending on $\tau_0$, on $Q$, on the chart, and on $\|\chi\|_{L^\infty}$, but independent of $R$ and of $\Theta\in\mathbb T_{\tau_0}$. Applying Proposition~\ref{prop:diagonal-under-projection} with $\beta=2$ to the transversal subfamily gives
\[
\int_{\mathbb R^3}
\sum_{\Theta\in\mathbb T_{\tau_0}}
|f_\Theta(x)|^2\,\mathrm d\mu_Q(x)
\le
C_{\tau_0}
\sum_{\Theta\in\mathbb T_{\tau_0}}
\|f_\Theta\|_{L^2(\mathbb R^3)}^2.
\]
Since $\rho\le1$, this contribution is also bounded by
\[
C_{\tau_0}\rho^{-1/2}
\sum_{\Theta\in\mathbb T_{\tau_0}}
\|f_\Theta\|_{L^2(\mathbb R^3)}^2.
\]

For the nontransversal subfamily $\mathbb E_{\tau_0}$, apply Proposition~\ref{prop:quadratic-nontransversal-box-cost} to each box $\Theta\in\mathbb E_{\tau_0}$. Summing in $\Theta$ gives
\[
\int_{\mathbb R^3}
\sum_{\Theta\in\mathbb E_{\tau_0}}
|f_\Theta(x)|^2\,\mathrm d\mu_Q(x)
\le
C\rho^{-1/2}
\sum_{\Theta\in\mathbb E_{\tau_0}}
\|f_\Theta\|_{L^2(\mathbb R^3)}^2.
\]

Assembling the two contributions by Proposition~\ref{prop:quadratic-two-piece-assembly} gives
\[
\int_{\mathbb R^3}
\sum_\Theta |f_\Theta(x)|^2\,\mathrm d\mu_Q(x)
\le
C\rho^{-1/2}
\sum_\Theta
\|f_\Theta\|_{L^2(\mathbb R^3)}^2.
\]
Taking square roots,
\[
\|\mathcal G_R f\|_{L^2(\mathrm d\mu_Q)}
\le
C\rho^{-1/4}
\left(
\sum_\Theta
\|f_\Theta\|_{L^2(\mathbb R^3)}^2
\right)^{1/2}.
\]
The $L^2$ almost orthogonality of the frequency partition implies
\[
\left(
\sum_\Theta
\|f_\Theta\|_{L^2(\mathbb R^3)}^2
\right)^{1/2}
\lesssim
\|f\|_{L^2(\mathbb R^3)}.
\]
Since $\rho=R^{-1/2}$, one has $\rho^{-1/4}=R^{1/8}$.
\end{proof}

\begin{remark}[scope of the upper bound]\label{rem:quadratic-upper-bound-status}
Theorem~\ref{thm:quadratic-diagonal-sharp} assembles the uniform transversal bound for $\mathbb T_{\tau_0}$ with the boxwise nontransversal bound of Proposition~\ref{prop:quadratic-nontransversal-box-cost} for $\mathbb E_{\tau_0}$.

For each box, the nontransversal part is divided into the extreme strip $E_{v_\Theta}^{\mathrm{ext}}$ and its complement. The extreme strip is controlled by tubular Schur and the extreme tail mass; the complement is controlled by degenerate projection with threshold $\rho^{1/2}$. Both mechanisms have admissible quadratic cost $\rho^{-1/2}$.

The resulting norm loss is $\rho^{-1/4}=R^{1/8}$. Remark~\ref{rem:quadratic-sharpness} states that this loss cannot be removed in the quadratic model with positive surface density under the fixed microlocal normalization.
\end{remark}

\begin{remark}[optimality of the exponent]\label{rem:quadratic-sharpness}
Proposition~\ref{prop:quadratic-extreme-wave-packet-cost} and Corollary~\ref{cor:quadratic-no-uniform-bound} show that the loss
\[
\rho^{-1/4}=R^{1/8}
\]
cannot be removed in the quadratic model with positive surface density, provided the class of functions contains the tangent wave packet specified there. Therefore, the upper bound in Theorem~\ref{thm:quadratic-diagonal-sharp} is optimal at the norm scale under the fixed microlocal normalization.
\end{remark}

\bibliographystyle{plain}
\bibliography{SqFF}

\end{document}